\titleformat{\section}{\large\bfseries}{}{0pt}{\center \thesection.  }
\titlespacing{\section}{0pt}{*0}{*3}
\titleformat{\subsection}[runin]{\bfseries}{}{0pt}{\thesubsection \ \  }
\titlespacing{\subsection}{0pt}{*2}{*1.65}
\titleformat{\subsubsection}[runin]{\it}{}{0pt}{\thesubsubsection \ \  }
\titlespacing{\subsubsection}{0pt}{*2}{*1.65}
\title{Khovanov homology and exotic surfaces in the 4-ball}
\author{Kyle Hayden} \address{Rutgers University, Newark, NJ 07102, USA} 
\email{kyle.hayden@rutgers.edu}
\author{Isaac Sundberg} 
\address{Max Planck Institut f\"ur Mathematik, 53111 Bonn, Germany} 
\email{sundberg@mpim-bonn.mpg.de}
\theoremstyle{plain}
\newtheorem{thm}{Theorem}[section]   \newtheorem{lem}[thm]{Lemma}
              \newtheorem{prop}[thm]{Proposition}
\newtheorem{cor}[thm]{Corollary}
\newtheorem{heuristic}[thm]{Heuristic}
\newtheorem{big-ex}[thm]{Example}
\newtheorem*{ques*}{Question}
\newtheorem*{conj*}{Conjecture}
\newtheorem*{thmcusp*}{Theorem B}
\newtheorem*{thm-plain}{Theorem}
\theoremstyle{definition}
\newtheorem{ex}[thm]{Example}
\newtheorem{rem}[thm]{Remark}       \newtheorem*{rem*}{Remark}             
\theoremstyle{remark}
\newtheorem{ex-main}[thm]{Example}
\newcommand{\cc}{\mathbb{C}}
\newcommand{\rr}{\mathbb{R}}
\newcommand{\zz}{\mathbb{Z}}
\newcommand{\st}{{\mathrm{st}}}
\newcommand{\id}{\operatorname{id}}
\newcommand{\Kh}{\operatorname{Kh}}
\newcommand{\CKh}{\operatorname{\mathcal{C}Kh}}
\newcommand{\Diff}{\operatorname{Diff}}
\newcommand{\Sym}{\operatorname{Sym}}
\renewcommand{\S}{\textsection}
\newcommand{\hfk}{\widehat{HFK}}
\newcommand{\crossing}{\raisebox{-.2\height}{\includegraphics[scale=.2]{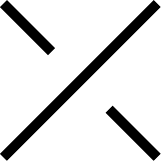}} \null}
\newcommand{\zsmooth}{\raisebox{-.1\height}{\includegraphics[scale=.2]{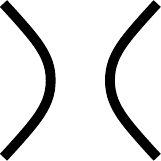}}\null}
\newcommand{\osmooth}{\raisebox{-.1\height}{\includegraphics[scale=.2]{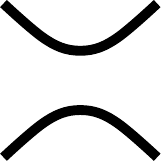}}\null}
\newcommand{\kcap}{\includegraphics[scale=.2,trim=0 .2cm 0 0]{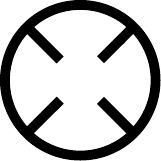}\null}
\newcommand{\kcup}{\includegraphics[scale=.15,trim=0 .2cm 0 0]{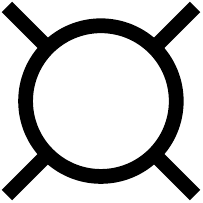}\null}
\newcommand{\ksmooth}{\includegraphics[scale=.2,trim=0 .2cm 0 0]{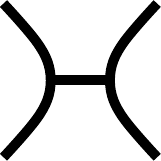}\null}
\newcommand{\kdot}{\includegraphics[scale=.2,trim=0 .2cm 0 0]{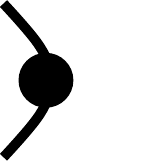}\null}
\newcommand{\kocirc}{\raisebox{-.375\height}{\includegraphics[scale=.225,trim=0 .2cm 0 0]{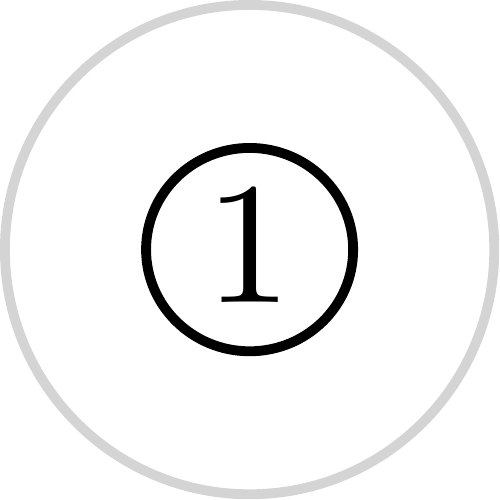}}\null}
\newcommand{\kxcirc}{\raisebox{-.375\height}{\includegraphics[scale=.225,trim=0 .2cm 0 0]{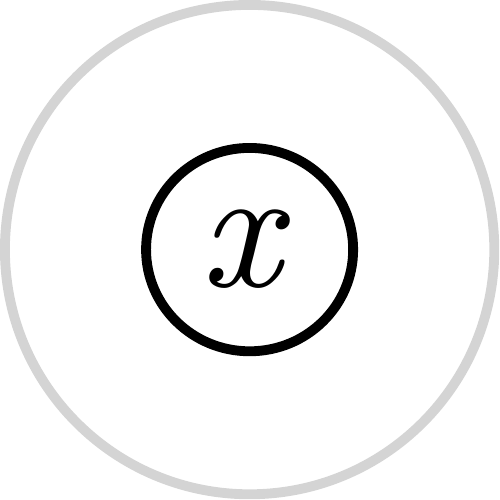}}\null}
\newcommand{\kcobdo}{\raisebox{-.375\height}{\includegraphics[scale=.225,trim=0 .2cm 0 0]{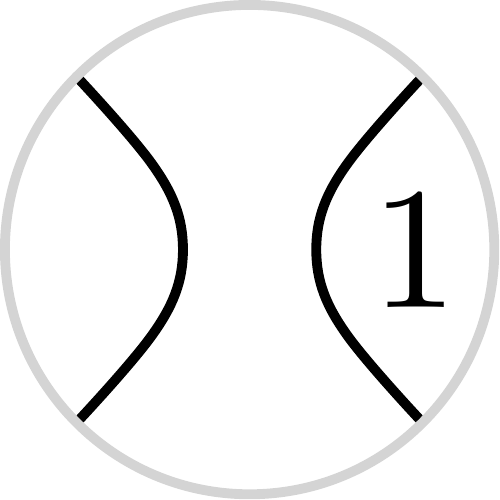}}\null}
\newcommand{\kcobdx}{\raisebox{-.375\height}{\includegraphics[scale=.225,trim=0 .2cm 0 0]{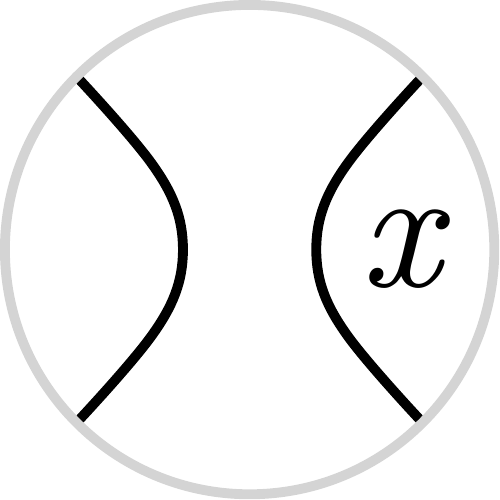}}\null}
\newcommand{\kcobdoo}{\raisebox{-.375\height}{\includegraphics[scale=.225,trim=0 .2cm 0 0]{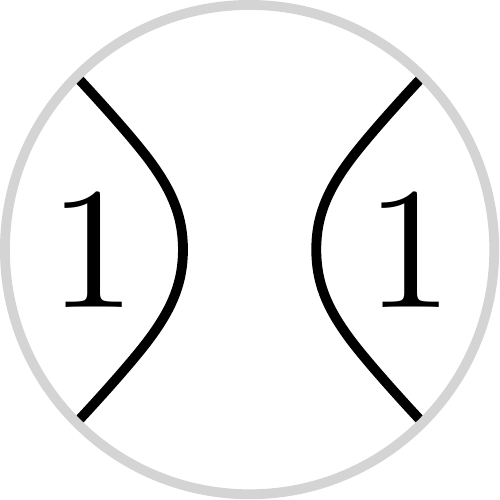}}\null}
\newcommand{\kcobdox}{\raisebox{-.375\height}{\includegraphics[scale=.225,trim=0 .2cm 0 0]{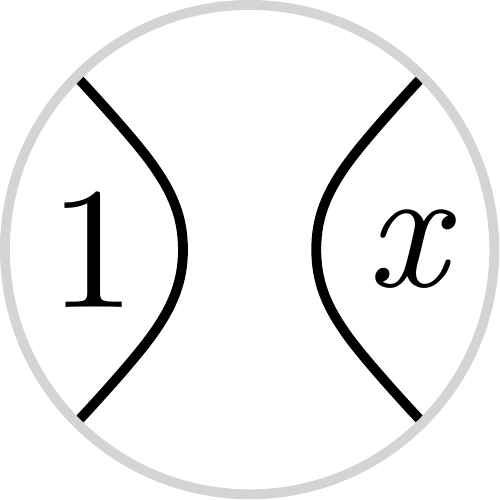}}\null}
\newcommand{\kcobdxo}{\raisebox{-.375\height}{\includegraphics[scale=.225,trim=0 .2cm 0 0]{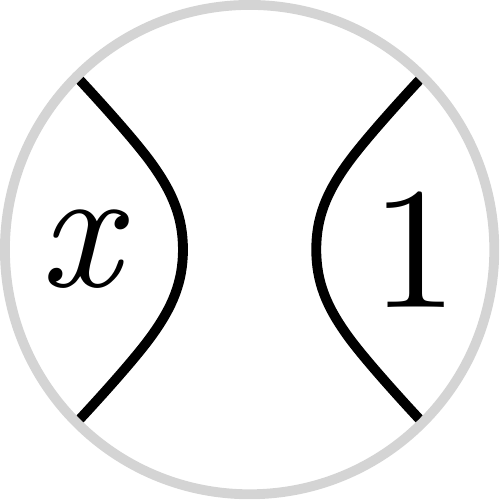}}\null}
\newcommand{\kcobdxx}{\raisebox{-.375\height}{\includegraphics[scale=.225,trim=0 .2cm 0 0]{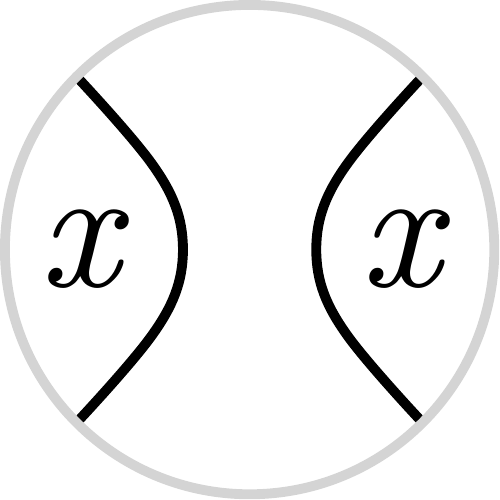}}\null}
\newcommand{\kcobio}{\raisebox{-.375\height}{\includegraphics[scale=.225,trim=0 .2cm 0 0]{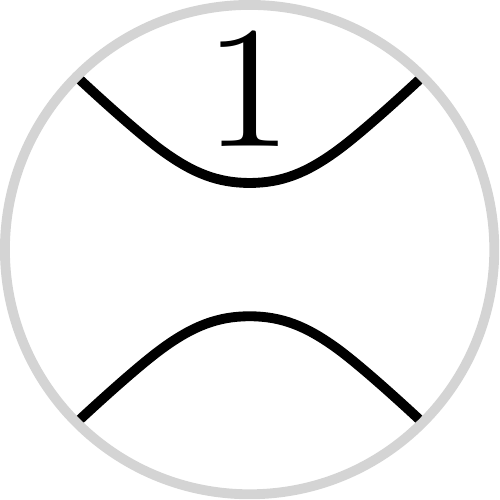}}\null}
\newcommand{\kcobix}{\raisebox{-.375\height}{\includegraphics[scale=.225,trim=0 .2cm 0 0]{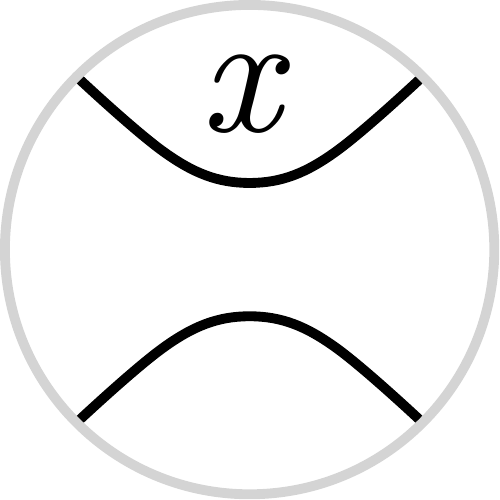}}\null}
\newcommand{\kcobiox}{\raisebox{-.375\height}{\includegraphics[scale=.225,trim=0 .2cm 0 0]{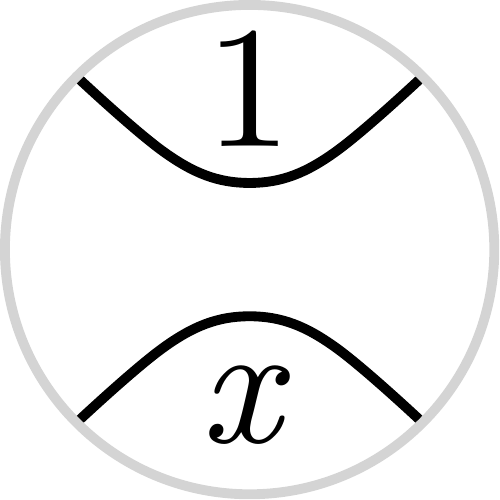}}\null}
\newcommand{\kcobixo}{\raisebox{-.375\height}{\includegraphics[scale=.225,trim=0 .2cm 0 0]{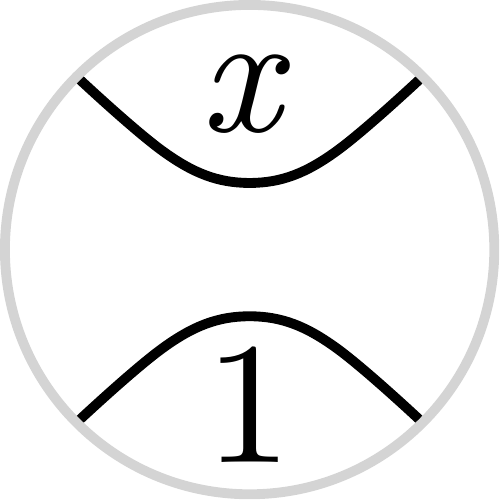}}\null}
\newcommand{\kcobixx}{\raisebox{-.375\height}{\includegraphics[scale=.225,trim=0 .2cm 0 0]{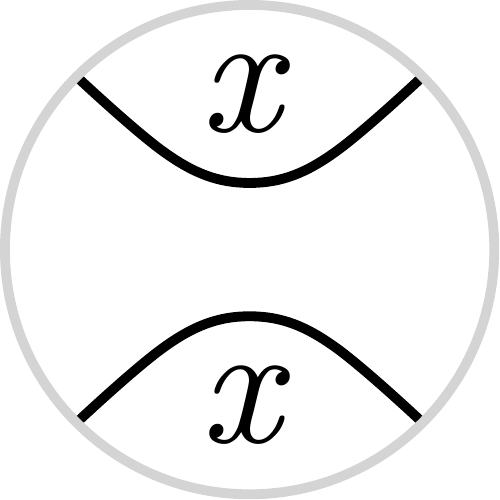}}\null}
\newcommand{\kropos}{\raisebox{-.375\height}{\includegraphics[scale=.225,trim=0 .2cm 0 0]{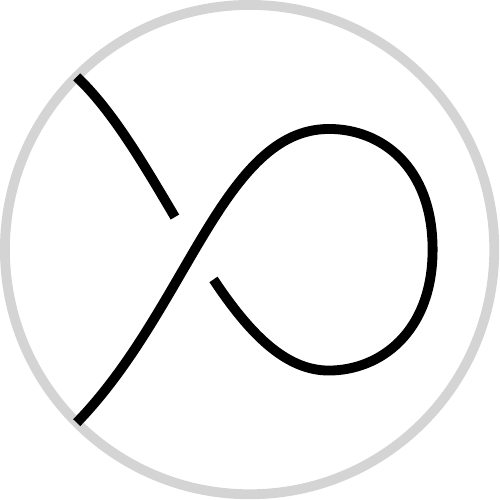}}\null}
\newcommand{\kroneg}{\raisebox{-.375\height}{\includegraphics[scale=.225,trim=0 .2cm 0 0]{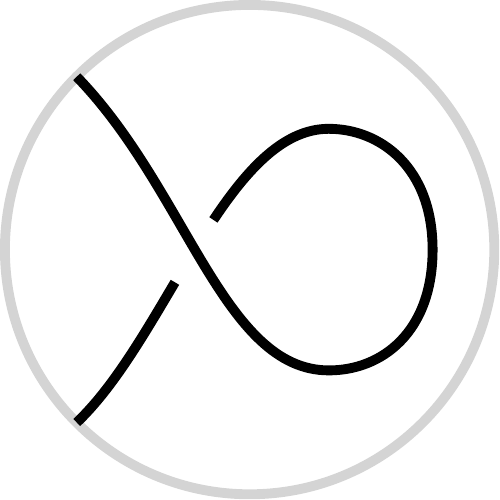}}\null}
\newcommand{\kroarc}{\raisebox{-.375\height}{\includegraphics[scale=.225,trim=0 .2cm 0 0]{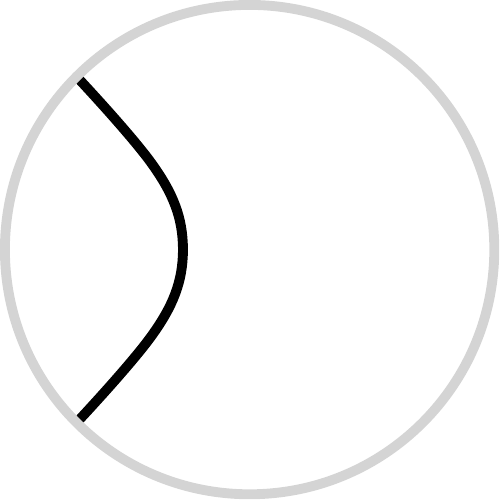}}\null}
\newcommand{\krosa}{\raisebox{-.375\height}{\includegraphics[scale=.225,trim=0 .2cm 0 0]{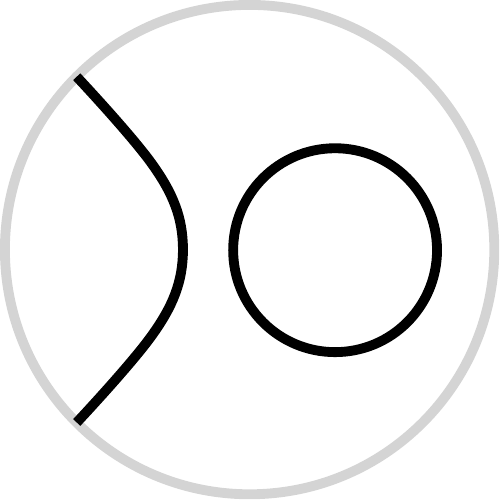}}\null}
\newcommand{\krosb}{\raisebox{-.375\height}{\includegraphics[scale=.225,trim=0 .2cm 0 0]{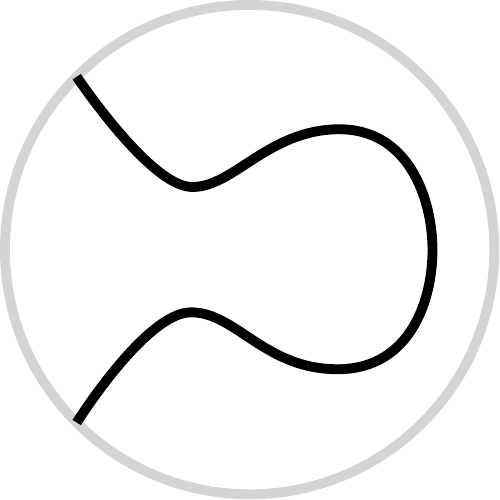}}\null}
\newcommand{\kroma}{\raisebox{-.375\height}{\includegraphics[scale=.225,trim=0 .2cm 0 0]{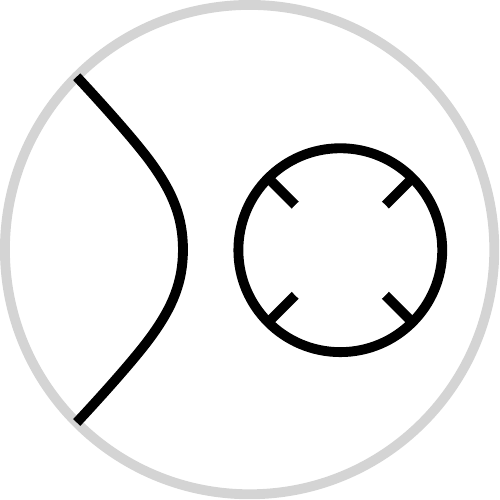}}\null}
\newcommand{\kromb}{\raisebox{-.375\height}{\includegraphics[scale=.225,trim=0 .2cm 0 0]{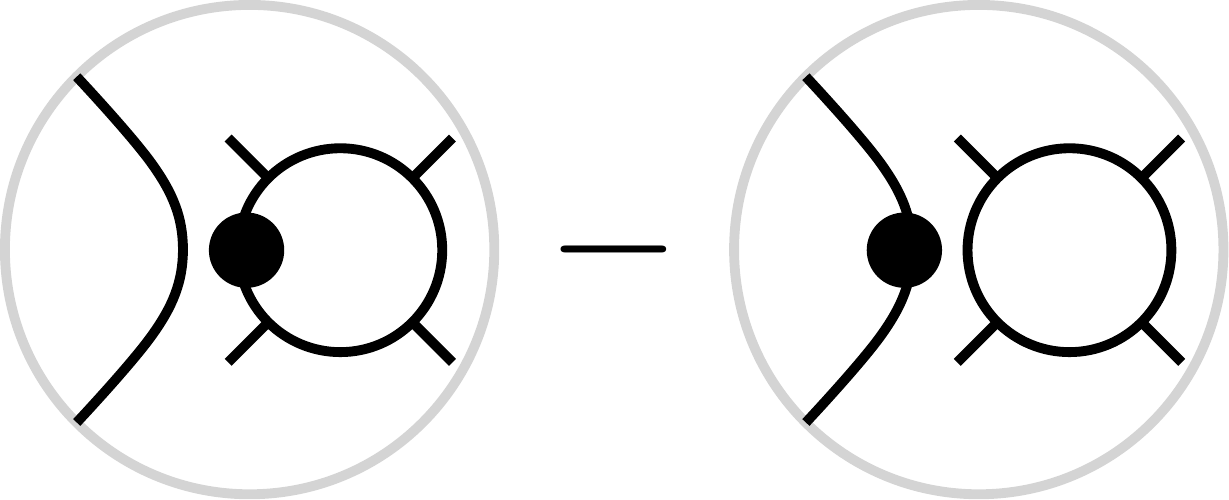}}\null}
\newcommand{\kromc}{\raisebox{-.375\height}{\includegraphics[scale=.225,trim=0 .2cm 0 0]{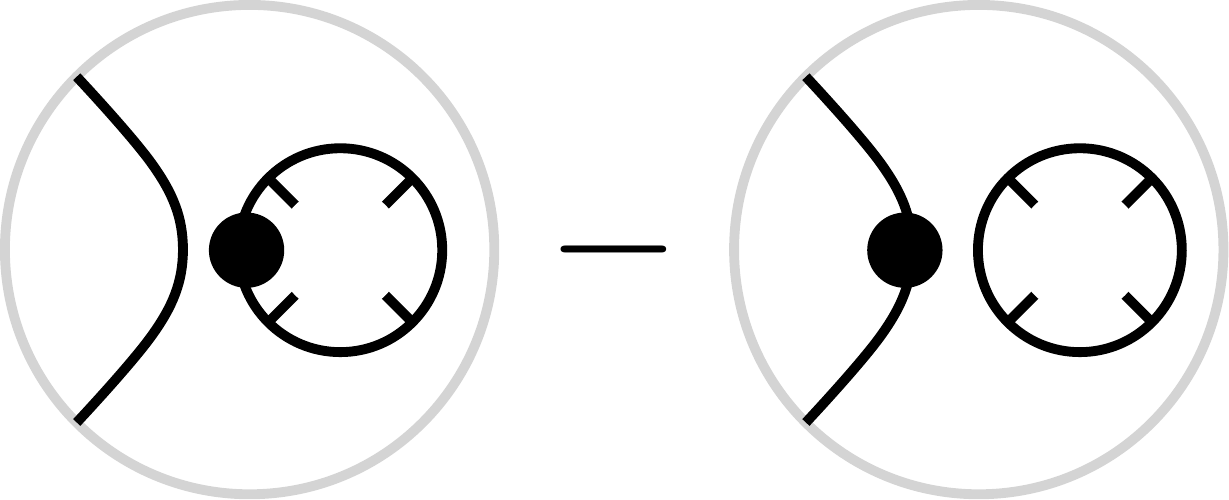}}\null}
\newcommand{\kromd}{\raisebox{-.375\height}{\includegraphics[scale=.225,trim=0 .2cm 0 0]{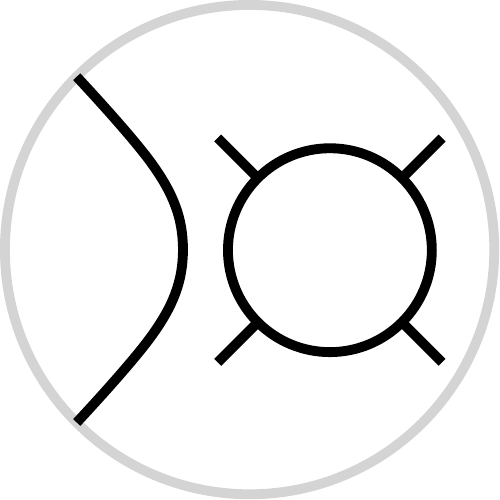}}\null}
\newcommand{\krtcr}{\raisebox{-.375\height}{\includegraphics[scale=.225,trim=0 .2cm 0 0]{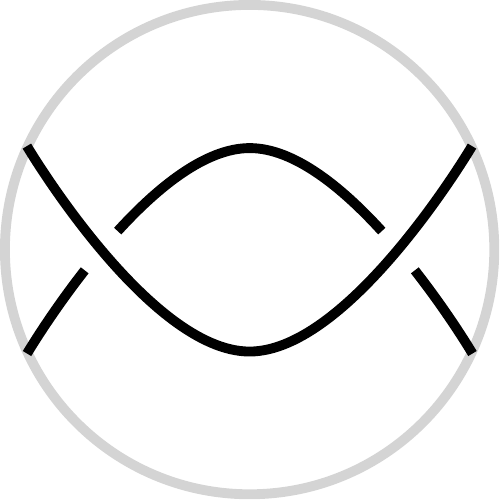}}\null}
\newcommand{\krtcrl}{\raisebox{-.375\height}{\includegraphics[scale=.225,trim=0 .2cm 0 0]{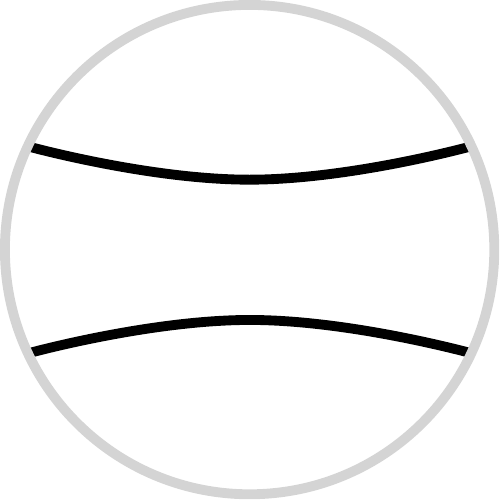}}\null}
\newcommand{\krtsa}{\raisebox{-.375\height}{\includegraphics[scale=.225,trim=0 .2cm 0 0]{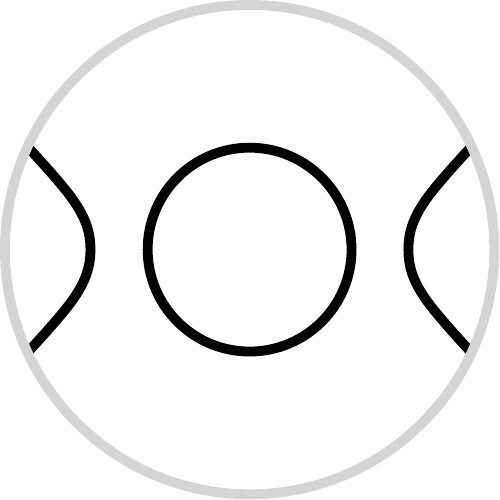}}\null}
\newcommand{\krtsb}{\raisebox{-.375\height}{\includegraphics[scale=.225,trim=0 .2cm 0 0]{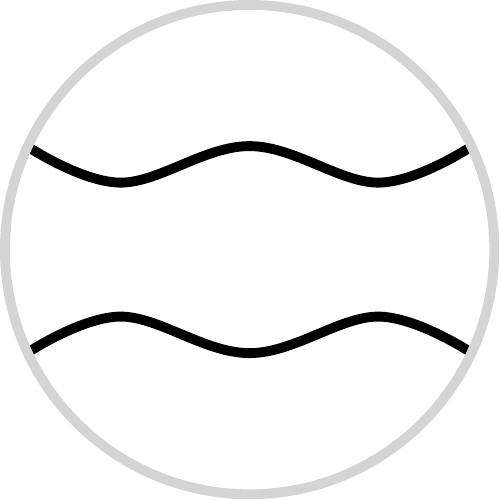}}\null}
\newcommand{\krtsc}{\raisebox{-.375\height}{\includegraphics[scale=.225,trim=0 .2cm 0 0]{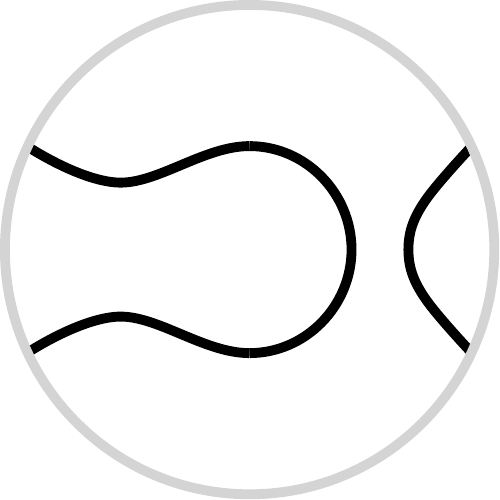}}\null}
\newcommand{\krtsd}{\raisebox{-.375\height}{\includegraphics[scale=.225,trim=0 .2cm 0 0]{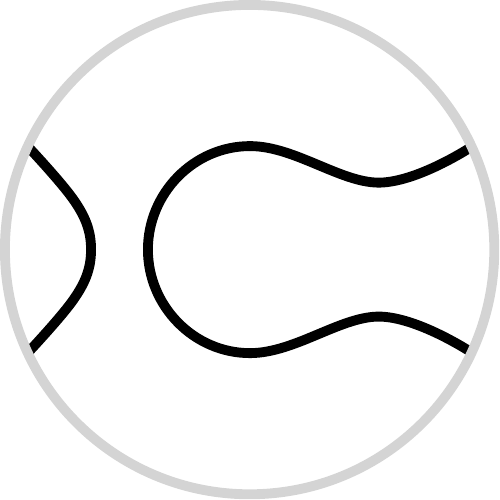}}\null}
\newcommand{\krtma}{\raisebox{-.375\height}{\includegraphics[scale=.225,trim=0 .2cm 0 0]{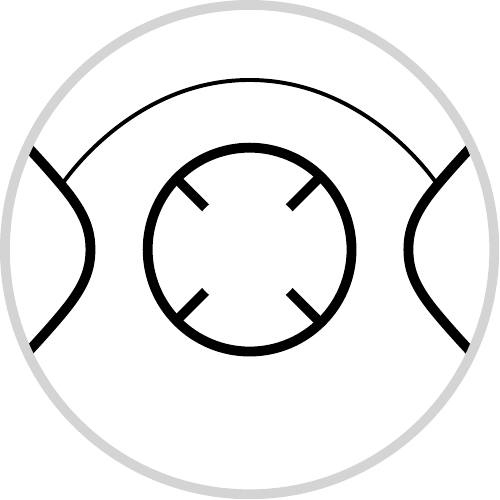}}\null}
\newcommand{\krtmb}{\raisebox{-.375\height}{\includegraphics[scale=.225,trim=0 .2cm 0 0]{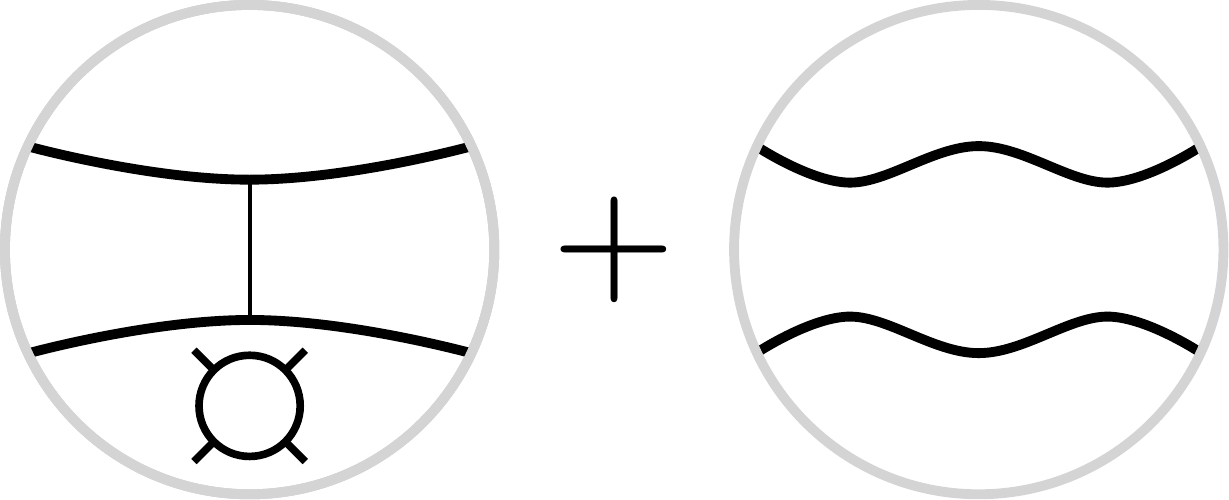}}\null}
\newcommand{\zdott}{\raisebox{-.2\height}{\includegraphics[scale=.215]{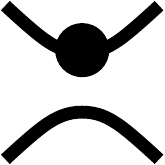}}\null}
\newcommand{\zdotb}{\raisebox{-.2\height}{\includegraphics[scale=.215]{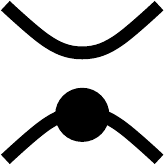}}\null}
\newcommand{\cycle}{\phi}
\newcommand{\class}{\phi}
\newcommand{\secondcycle}{\theta} 
\begin{document}

\begin{abstract}\vspace{-.075in}

We show that the cobordism maps on Khovanov homology can distinguish smooth surfaces in the 4-ball that are exotically knotted (i.e., isotopic through ambient homeomorphisms but not ambient diffeomorphisms). We develop new techniques for distinguishing cobordism maps on Khovanov homology, drawing on knot symmetries and braid factorizations. We also show that Plamenevskaya's transverse invariant in Khovanov homology is preserved by maps induced by positive ascending cobordisms.
\end{abstract}

\maketitle

\vspace{-.45in}

\section{Introduction} 
\null\vspace{-25pt}
Much of the power of Khovanov homology \cite{khovanov} is derived from its functoriality under link cobordisms. That is, an oriented  cobordism $\Sigma: L_0 \to L_1$ between links in $S^3$ induces a 
cobordism map $\Kh(\Sigma): \Kh(L_0) \to \Kh(L_1)$, which is well-defined up to sign and invariant up to isotopy of $\Sigma$ rel boundary \cite{jacobsson}. 

This functoriality is key to the growing number of 4-dimensional applications of Khovanov homology.  For example, it is used to prove that Rasmussen's $s$-invariant of a knot \cite{rasmussen:s} (defined using Lee's deformed theory \cite{lee}) 
gives a lower bound on (twice) the minimal genus of any smooth, orientable surface the knot bounds in $B^4$.  Rasmussen's invariant, in turn, is key to several spectacular applications of Khovanov homology,  such as Piccirillo's proof that the Conway knot is not slice \cite{picc:conway}, Rasmussen's  reproofs of the Milnor conjecture \cite{rasmussen:s} and the existence of exotic smooth structures on $\rr^4$ \cite{rasmussen:polynomials}, and Lambert-Cole's reproof of the adjunction inequality for surfaces in symplectic 4-manifolds via trisections and contact geometry \cite{plc:adjunction}. To date,  results using Rasmussen-type invariants appear to be the only known applications of Khovanov homology to the detection of 
\emph{exotic} phenomena ---  differences between the smooth and topological categories in dimension four.  
It remains a major goal to use Khovanov homology and its generalizations to shed new light on 4-manifolds and the exotic phenomena they exhibit \cite{manandmachine, morrison-walker, morrison-walker-wedrich, manolescu-neithalath,mmsw:s,mww:skein}.

In this paper, we show that Khovanov homology  can directly distinguish between \emph{exotically knotted} surfaces in $B^4$, i.e., pairs of smooth surfaces that are topologically isotopic through ambient homeomorphisms but not 
 ambient diffeomorphisms. This provides a direct, elementary,  combinatorial approach to distinguishing exotic surfaces. 

\begin{thm}\label{thm:main}
For all integers $g \geq 0$, there are infinitely many knots $K \subset S^3$ that each bound a pair of smooth, orientable, genus-$g$ surfaces   $\Sigma,\Sigma' \subset B^4$  that are topologically isotopic rel~boundary yet induce different maps $\Kh(\Sigma) \neq \pm \Kh(\Sigma')$, hence are not smoothly isotopic rel boundary. Moreover, $K$ can be chosen to have trivial symmetry group, implying there is no smooth isotopy of $B^4$ carrying $\Sigma$ to $\Sigma'$.
\end{thm}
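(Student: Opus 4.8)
The plan is to reduce the statement to a comparison of two distinguished classes in $\Kh(K)$ and then separate those classes using a transverse invariant. Since $\Sigma$ and $\Sigma'$ are surfaces in $B^4$ with common boundary $K \subset S^3 = \partial B^4$, each is an oriented cobordism from the empty link to $K$, so the induced map $\Kh(\Sigma)\colon \Kh(\emptyset) = \zz \to \Kh(K)$ is determined up to sign by the single class $\Kh(\Sigma)(1) \in \Kh(K)$. Thus $\Kh(\Sigma) \neq \pm\Kh(\Sigma')$ is equivalent to $\Kh(\Sigma)(1) \neq \pm\Kh(\Sigma')(1)$, and everything comes down to producing a pair of surfaces whose associated classes are genuinely distinct up to sign. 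I would build the pair from a braid factorization carrying a symmetry: starting from a suitable knot $K$ presented as a braid closure, I would write down two band-attachment movies producing genus-$g$ surfaces $\Sigma, \Sigma'$, where $\Sigma'$ is obtained from $\Sigma$ by a reflection or cyclic reordering of the braid word (or an application of a knot symmetry) that is formally trivial topologically, but is engineered so that $\Sigma$ is a \emph{positive ascending} cobordism while $\Sigma'$ is not.

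The topological half of the statement is established separately from Khovanov homology. Here I would verify that $\Sigma$ and $\Sigma'$ have diffeomorphic, simply-connected (or otherwise good-fundamental-group) complements in $B^4$ with matching normal/homological data, and then invoke the topological classification of surfaces in the $4$-ball rel boundary — Freedman–Quinn surgery in its relative form, as developed for discs and surfaces by Conway–Powell and Sunukjian — to conclude that $\Sigma$ and $\Sigma'$ are ambiently topologically isotopic rel $K$. This step does not see the smooth structure and so is logically independent of the distinguishing computation.

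The crux is the smooth distinction. Rather than computing $\Kh(\Sigma)(1)$ and $\Kh(\Sigma')(1)$ outright, I would exploit Plamenevskaya's transverse invariant $\psi$ together with the preservation result stated in the abstract. Because $\Sigma$ is a positive ascending cobordism, that result yields $\Kh(\Sigma)(1) = \psi(K)$, a specific nonzero class. For $\Sigma'$ I would show its class falls outside $\{\pm\psi(K)\}$, ideally by post-composing with an auxiliary positive ascending cobordism $C\colon K \to K''$ (say to a transversely simple knot or an unknot) chosen so that $\Kh(C)$ annihilates $\Kh(\Sigma')(1)$ while $\Kh(C)\bigl(\psi(K)\bigr) = \psi(K'') \neq 0$. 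This asymmetry — one composite vanishing, the other not — is immune to the overall sign ambiguity in functoriality, so it gives $\Kh(\Sigma) \neq \pm\Kh(\Sigma')$ and hence smooth non-isotopy rel boundary. The main obstacle lies precisely here: one must prove the transverse-invariant preservation statement carefully, verify that $\Sigma$ genuinely has the positive ascending structure, and — the most delicate point — certify that the symmetric partner $\Sigma'$ moves the class off $\pm\psi(K)$ rather than accidentally preserving it; this is where the braid combinatorics and symmetry analysis must do real work.

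Finally, I would upgrade to the ambient and infinite statements. If $K$ is chosen with trivial symmetry group, any ambient diffeomorphism of $B^4$ carrying $\Sigma$ to $\Sigma'$ restricts to a self-diffeomorphism of $(S^3, K)$, which by triviality of the symmetry group is smoothly isotopic to one fixing $K$ setwise and orientation-preservingly; this reduces the ambient question to the rel-boundary case already excluded. To obtain infinitely many knots and every genus $g \geq 0$, I would take connected sums (or satellite and cabling operations) of the base example, using the multiplicativity of Khovanov cobordism maps and of $\psi$ under connected sum to propagate the distinction, and add trivial tubes to raise the genus while preserving both the positive ascending structure of $\Sigma$ and the triviality of the symmetry group.
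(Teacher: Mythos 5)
Your reduction to Khovanov--Jacobsson classes and your final symmetry-group step are fine in outline, but the crux of your argument --- certifying that $\Kh(\Sigma')(1) \neq \pm\psi(K)$ --- has no working mechanism behind it, and the tool you invoke points the wrong way. The preservation theorem for Plamenevskaya's invariant applies to an ascending cobordism with positive critical points from $L_0$ up to $L_1$ \emph{viewed in reverse}: it is the map $\Kh(L_1) \to \Kh(L_0)$, from the top link down to the bottom link, that carries $\psi(L_1)$ to $\pm\psi(L_0)$. So for your auxiliary cobordism $C: K \to K''$ the theorem controls $\Kh(K'') \to \Kh(K)$, not $\Kh(C): \Kh(K) \to \Kh(K'')$, and the identity $\Kh(C)(\psi(K)) = \psi(K'')$ is not available. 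More fundamentally, ``$\Sigma'$ is not positive ascending'' gives no control whatsoever on $\Kh(\Sigma')(1)$: nothing prevents a non-ascending surface from having Khovanov--Jacobsson class $\pm\psi(K)$, and to show $\Kh(C)$ annihilates $\Kh(\Sigma')(1)$ you would need to already understand that class --- exactly the computation you set out to avoid. The paper makes the opposite point explicitly: $\psi$ is \emph{too uniform} to distinguish exotic surfaces. Indeed, in the paper's core examples \emph{both} disks $D$ and $D'$ are positively braided surfaces (compact pieces of complex curves), so both interact with $\psi$ identically, and your approach is structurally blind to such pairs. The paper instead hand-builds surface-specific cycles $\phi \in \CKh(K)$ (guided by knot symmetries and band factorizations, \emph{not} equal to Plamenevskaya's cycle) and evaluates the \emph{downward} maps $\Kh(K) \to \Kh(\emptyset) = \zz$, where one band move merges two $x$-labeled circles and kills $\phi$ while the other sends it to $1$; it deliberately avoids your dual formulation because identifying and comparing classes inside $\Kh(K)$ is computationally impractical.

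Two of your closing steps also fail or need repair. Raising the genus by ``adding trivial tubes'' is internal stabilization, which is known to destroy exotic phenomena (Baykur--Sunukjian: exotically knotted surfaces become smoothly isotopic after stabilizations), so the distinction cannot be assumed to survive tubing; the paper instead changes the boundary knot to a family $J_{m,n}$, realizes the genus by explicit saddle cobordisms from $J_{m,n}$ to $J_m$, and tracks a concrete cycle $\theta$ through the composed cobordism maps. Similarly, producing infinitely many knots by connected sum endangers the trivial-symmetry-group requirement (composite knots can acquire symmetries, e.g.\ exchanging homeomorphic summands), and this is precisely where your ambient-isotopy upgrade lives; the paper instead uses the hyperbolic knots $J_{m,n}$ for $m \gg 0$ and verifies triviality of $\Sym(J_{m,n})$ via SnapPy together with the Futer--Purcell--Schleimer effective Dehn surgery bounds. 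Your topological step is the one part that essentially matches the paper, though the paper applies Conway--Powell only to the core disks (whose complements it shows have $\pi_1 \cong \zz$ by handle calculus) and deduces the higher-genus case by noting $\Sigma'_{m,n}$ differs from $\Sigma_{m,n}$ only by swapping $D$ for $D'$, rather than classifying positive-genus surfaces directly.
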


The surfaces $\Sigma$ and $\Sigma'$ are modeled on a core pair of examples, drawn from \cite{hayden:curves} and depicted in Figure~\ref{fig:main-disks}. We distinguish their induced maps in \S\ref{subsec:core}. 

\begin{figure}\center
	\def\svgwidth{.87\linewidth}
\begingroup%
  \makeatletter%
  \providecommand\color[2][]{%
    \errmessage{(Inkscape) Color is used for the text in Inkscape, but the package 'color.sty' is not loaded}%
    \renewcommand\color[2][]{}%
  }%
  \providecommand\transparent[1]{%
    \errmessage{(Inkscape) Transparency is used (non-zero) for the text in Inkscape, but the package 'transparent.sty' is not loaded}%
    \renewcommand\transparent[1]{}%
  }%
  \providecommand\rotatebox[2]{#2}%
  \newcommand*\fsize{\dimexpr\f@size pt\relax}%
  \newcommand*\lineheight[1]{\fontsize{\fsize}{#1\fsize}\selectfont}%
  \ifx\svgwidth\undefined%
    \setlength{\unitlength}{759.03727854bp}%
    \ifx\svgscale\undefined%
      \relax%
    \else%
      \setlength{\unitlength}{\unitlength * \real{\svgscale}}%
    \fi%
  \else%
    \setlength{\unitlength}{\svgwidth}%
  \fi%
  \global\let\svgwidth\undefined%
  \global\let\svgscale\undefined%
  \makeatother%
  \begin{picture}(1,0.31817103)%
    \lineheight{1}%
    \setlength\tabcolsep{0pt}%
    \put(0,0){\includegraphics[width=\unitlength,page=1]{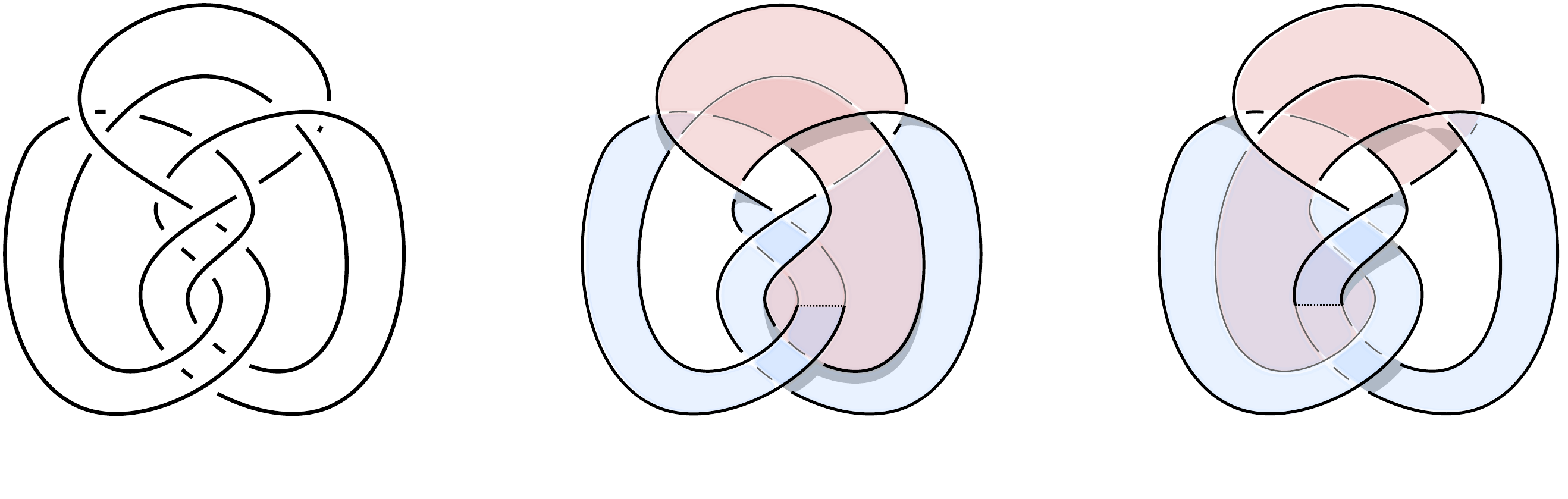}}%
    \put(0.11586816,0.00499062){\makebox(0,0)[lt]{\lineheight{1.25}\smash{\begin{tabular}[t]{l}$J$\end{tabular}}}}%
    \put(0.47776745,0.00499062){\makebox(0,0)[lt]{\lineheight{1.25}\smash{\begin{tabular}[t]{l}$D$\end{tabular}}}}%
    \put(0.8403117,0.00499062){\makebox(0,0)[lt]{\lineheight{1.25}\smash{\begin{tabular}[t]{l}$D'$\end{tabular}}}}%
  \end{picture}%
\endgroup%

	\caption{The slice disks $D$ and $D'$ bounded by the knot $J$ are topologically isotopic rel boundary yet induce distinct cobordism maps on Khovanov homology.}\label{fig:main-disks}
\end{figure}

\begin{rem}\label{remark:akbulut}
	Surprisingly, the disks in Figure~\ref{fig:main-disks} arise separately in \cite{akbulut:zeeman}, where they were distinguished via Donaldson theory. These pairs of disks will be shown to be equivalent in \cite{choi-hayden}.
	Khovanov previously asked if these disks 
	can be distinguished by their induced maps; this paper gives an affirmative answer to this question.
\end{rem}

\vspace{-2.5pt}

Our strategy is simple and direct: viewing the surfaces $\Sigma$ and $\Sigma'$ as link cobordisms $K \to \emptyset$, we distinguish the induced maps $\Kh(\Sigma)$ and $\Kh(\Sigma')$ by finding an explicit homology class $\class \in \Kh(K)$ such that $\Kh(\Sigma)(\class) \neq 0$ yet $\Kh(\Sigma')(\class)=0$. 
 This approach is dual to that of \cite{sundberg-swann}, where instead the surfaces are viewed as cobordisms $\emptyset \to K$ and the induced maps are distinguished by their associated \textit{relative Khovanov-Jacobsson classes}, 
  i.e., the classes in $\Kh(K)$ to which they map the generator of $\Kh(\emptyset) = \zz$, modulo sign. These classes are convenient to define but can be impractical as  explicit obstructions, requiring significant computational endurance to both calculate and distinguish. In contrast,  viewing the surfaces as link cobordisms $K \to \emptyset$, we  limit the computational complexity by choosing the class $\class \in \Kh(K)$ and can easily compare the integers $\Kh(\Sigma)(\class)$ and $\Kh(\Sigma')(\class)$. (Formally, these two approaches reflect the duality of Khovanov homology under mirroring \cite[\S7.3]{khovanov}.)

This shift to the dual perspective comes at a cost, as we must directly identify classes  $\phi \in \Kh(K)$ that distinguish the surfaces bounded by $K$. 
We use two perspectives to help illuminate classes in $\Kh(K)$ of topological/geometric significance: 
   In  \S\ref{sec:examples}, our constructions are guided by studying symmetries of $K$ that fail to extend over the surfaces it bounds.   (See \cite{ls:invertible,bdms} for deeper investigation of the equivariant perspective.) In \S\ref{sec:plam}, we offer a second perspective using braids and complex curves, as discussed below. These strategies have since been successfully applied to other problems, e.g., distinguishing Seifert surfaces  \cite{hkmps:seifert} and satellite surfaces in \cite{guth-hayden-kang-park}.

\smallskip

\textbf{Connections with braids and Plamenevskaya's invariant} \ With an eye towards a more systematic and geometric approach to these cobordism maps, we  develop computational techniques from a braid-theoretic perspective. A natural starting point is Plamenevskaya's invariant of \emph{transverse links} \cite{plamenevskaya:transverse-Kh}, i.e., oriented links that are positively transverse to the planes of the standard contact structure on $S^3$. This detour through contact geometry is motivated by the fact that many of our surfaces arise as the transverse intersection of a smooth complex curve in $\mathbb{C}^2$ with the unit 4-ball; the boundary is then a transverse link in $S^3$ (c.f., \cite{bo:qp,hayden:stein}). This includes the disks in Figure~\ref{fig:main-disks} bounded by $J=17nh_{74}$, as well as those bounded by the knots $m(9_{46})$ and $15n_{103488}$ in Figure~\ref{fig:further} and by $10_{148}$ in Figure~\ref{fig:10-148}. This connection between surfaces in $B^4$ and complex curves is expressed  using Rudolph's framework of \emph{braided surfaces} \cite{rudolph:braided-surface,rudolph:qp-alg}, which we review in \S\ref{sec:plam}; see Figure~\ref{fig:braided-surfaces} for an example.

We show that Plamenevskaya's invariant behaves naturally under the maps induced by compact pieces of complex curves. This applies more broadly to \emph{ascending cobordisms with positive critical points}, a class of surfaces that generalize complex curves; see \S\ref{sec:plam}.

\begin{figure}\center
\includegraphics[width=\linewidth]{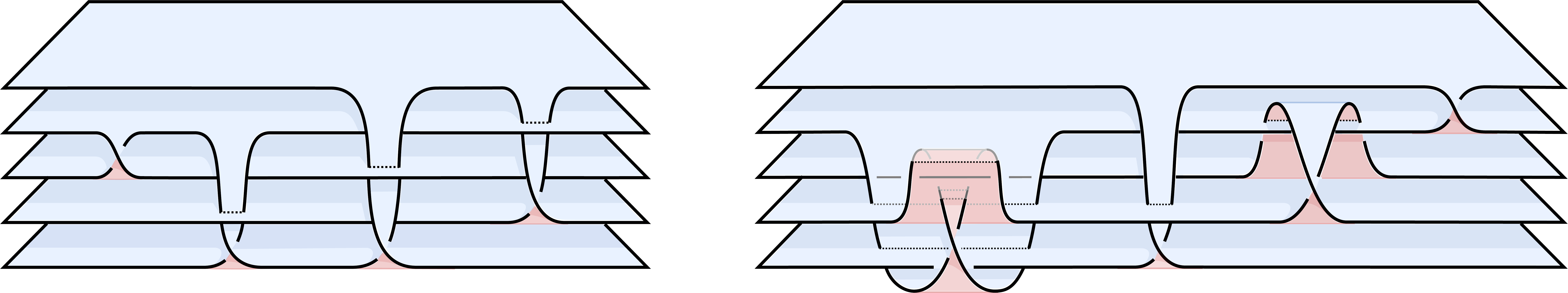}
\caption{Positively braided surfaces representing the disks $D$ (left) and $D'$ (right).}
\label{fig:braided-surfaces}
\end{figure}


\begin{thm}\label{thm:plam}
Suppose $\Sigma \subset S^3 \times [0,1]$ is an ascending cobordism with positive critical points between transverse links $L_0$ and $L_1$ in $(S^3,\xi_\st)$, viewed as a link cobordism from $L_1$ to $L_0$. The induced cobordism map $\Kh(\Sigma): \Kh(L_1) \to \Kh(L_0)$ maps $\psi(L_1)$ to $\pm \psi(L_0)$, where $\psi(L_i)$ denotes Plamenevskaya's invariant in $\Kh(L_i)$.
\end{thm}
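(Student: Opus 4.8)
The plan is to prove the theorem by decomposing the ascending cobordism $\Sigma$ into elementary pieces and tracking Plamenevskaya's class $\psi$ through each. By definition, an ascending cobordism with positive critical points is built from births (minima), positive saddles, and no deaths or negative saddles. Reading $\Sigma$ as a cobordism from $L_1$ down to $L_0$, these elementary moves appear in the reversed order, but the essential structure is that $\Sigma$ factors as a composite of (i) cobordisms induced by positive Reidemeister~I stabilizations/destabilizations and planar isotopies, (ii) saddle moves merging or splitting transverse link components, and (iii) the birth/death circles. First I would recall the local model for $\psi(L)$: Plamenevskaya's invariant is the distinguished generator in the oriented resolution of a transverse braid representative, obtained by labeling every circle in the oriented (Seifert) smoothing by the element $x_- \in \{x_+, x_-\}$ in each Khovanov summand. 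The proof reduces to showing that each elementary cobordism map sends this distinguished class to the corresponding distinguished class in the target, up to sign.

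The key computational steps are the local verifications. For a positive saddle, the merge map $m$ and the split map $\Delta$ must be checked against the labels prescribed by $\psi$; here the crucial point is that a positive saddle in an ascending cobordism corresponds to the oriented resolution \emph{persisting} as the oriented resolution on both sides, so that $m(x_- \otimes x_-) = x_-$ handles merges and the relevant component of $\Delta(x_-) = x_- \otimes x_+ + x_+ \otimes x_-$ must be reconciled with the transverse invariant's definition. I would lean on the characterization of $\psi$ via the lowest filtration level (as in Plamenevskaya's original work and its refinement by Shumakovitch/Lipshitz--Sarkar), showing that $\psi$ is the unique-up-to-sign generator of a particular graded piece that these maps preserve. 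For the birth, the unit map inserts a circle labeled $x_+$, and I would verify this matches $\psi$ of the stabilized link; for positive Reidemeister~I moves (positive crossings added in stabilization), the induced isomorphism is known to intertwine the transverse invariants, which is precisely the naturality built into $\psi$'s definition at the level of transverse isotopy and positive stabilization.

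The hard part, I expect, will be the saddle map computation, specifically ensuring that the ascending/positivity hypothesis forces every saddle to act on the oriented resolution in the grading-preserving way that carries $x_-$-labelings to $x_-$-labelings. The danger is a split saddle whose comultiplication output has two terms, only one of which lies in the correct filtration; I would argue that the positivity of the critical point guarantees the oriented smoothing before and after the saddle are compatibly oriented, so that the class $\psi$ is carried to $\psi$ and the off-filtration term either vanishes or is absorbed by the grading shift. A clean way to organize this is to work with the Lee or Bar-Natan deformation and identify $\psi$ with the appropriate canonical generator, using the fact that cobordism maps respect the filtration and that ascending positive cobordisms are filtered of the expected degree; then $\psi(L_1) \mapsto \pm\psi(L_0)$ follows from minimality of the filtration level together with the nonvanishing guaranteed by the chain-level formula. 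I would conclude by assembling the elementary pieces, noting that the overall sign ambiguity is exactly the $\pm$ inherent in both $\Kh(\Sigma)$ and the definition of $\psi$.
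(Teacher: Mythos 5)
Your overall skeleton (decompose $\Sigma$ into elementary pieces and track $\psi$ through each) matches the paper's, but the execution contains errors that are fatal as written. First, your Frobenius algebra formulas are wrong: in Khovanov homology (Table~\ref{table_morse}) one has $m(x \otimes x) = 0$, not $x$, and $\Delta(x) = x \otimes x$, not a two-term sum (the two-term formula is $\Delta(1)$). Since Plamenevskaya's class labels \emph{every} circle of the oriented resolution by $x$, any saddle that merges two distinct circles of that resolution kills $\psi$ outright; your plan to ``handle merges'' with $m(x_-\otimes x_-)=x_-$ (which holds only in the Bar-Natan deformation) therefore collapses, while your worry about a two-term comultiplication is moot. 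Second, your treatment of births is backwards: the unit map $\iota$ inserts a circle labeled $1$, which does \emph{not} agree with $\psi$ of the link together with the new unknotted component (that class wants the new circle labeled $x$). This is precisely why the theorem is stated for $\Sigma$ read in reverse, from $L_1$ to $L_0$: births become deaths, and $\varepsilon(x)=1$ deletes an $x$-labeled circle harmlessly. The directionality is essential content, not bookkeeping.

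The idea your proposal is missing is braided position, which is what makes the local verifications both possible and true. The paper invokes Theorem~\ref{thm:braid} (from the reference \cite{hayden:stein}): after isotopy, an ascending cobordism with positive critical points is, near each critical level, either a braided birth or a braided positive band attachment, and between critical levels it is swept out by a transverse isotopy. Read in reverse, the three pieces are (i) concordances induced by transverse isotopies, handled by the proof of Theorem~2 of \cite{plamenevskaya:transverse-Kh}; (ii) Morse deaths of split $x$-labeled circles; and (iii) positive crossing resolutions (deletions of a positive braid crossing), handled by Theorem~4 of \cite{plamenevskaya:transverse-Kh} (compare Lemma~\ref{lem:res}: such a resolution acts as the identity on oriented smoothings). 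At the chain level the saddle in case (iii) has both feet on a \emph{single} circle of the oriented resolution, so it acts by $\Delta$ followed by the Reidemeister~I death, i.e.\ $x \mapsto x \otimes x \mapsto x$ --- it is never a merge $m$ of two distinct $x$-circles. Without the braided structure, ``positive critical point'' gives you no local model in which to compute, and a generic movie of $\Sigma$ will contain saddles that do merge distinct circles of the oriented resolution of a non-braid diagram, where the all-$x$ labeling need not even be Plamenevskaya's cycle. Finally, your Lee/Bar-Natan fallback does not repair this: knowing the cobordism map is filtered of the expected degree only places $\Kh(\Sigma)(\psi(L_1))$ in the right filtration level; it does not identify it with $\pm\psi(L_0)$, because one must control boundary and lower-order terms, which is exactly the difficulty the braided decomposition circumvents.
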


While Theorem~\ref{thm:plam} can be useful (e.g., implying certain cobordism maps are nonzero or obstructing a surface from being isotopic to a complex curve), it also shows that the behavior of Plamenevskaya's invariant is too uniform to distinguish such surfaces. 

Instead, we pursue a modified construction to  produce Khovanov homology classes that are tuned to individual surfaces.  Plamenevskaya's construction is based on expressing a link as the closure of a braid, but it essentially depends only on the braid's equivalence class in the braid group (see \S\ref{subsec:plam}). Our modified construction depends on a specific presentation of the braid (known as a \emph{band factorization}) that also encodes a braided surface bounded by the link. To illustrate this approach, we give an alternate proof that the core disks $D$ and $D'$ from Figure~\ref{fig:main-disks} are distinct, as well as examples of distinct complex curves in $B^4$ bounded by the knot $10_{148}$.

\smallskip

\textbf{Connections with knot Floer homology} \  
The TQFT structure of a perturbed version of knot Floer homology has also been shown to distinguish exotically knotted surfaces of positive genus in the 4-ball \cite{jmz:exotic}. (Roughly speaking, this perturbed theory replaces $\hfk(K)$ with a perturbed group $\hfk(K) \otimes \mathbb{F}_2[\zz^{2g}]$.) The perturbed  cobordism maps are sensitive enough to detect the subtle effect of a twisted version of Fintushel-Stern's rim surgery construction \cite{fs:surfaces}. We also note that an analog of Theorem~\ref{thm:plam} was established for the transverse invariant in knot Floer homology in \cite{jmz:exotic}, and it plays a key role in their detection of exotically knotted surfaces. 

On the other hand, the obstruction from \cite{jmz:exotic} cannot distinguish between exotic pairs of complex curves. These differences spark several questions about the TQFT structures of knot Floer homology and Khovanov homology, including: (1) Can the cobordism maps in Khovanov homology distinguish surfaces related by rim surgery? (2) Can the cobordism maps in the standard (unperturbed) knot Floer theory distinguish exotically knotted surfaces in the 4-ball? (3) Can any version of knot Floer homology distinguish exotically knotted pairs of disks or complex curves in the 4-ball? 


\smallskip

\emph{Acknowledgements.}  The authors thank Alan Du and Gage Martin for helpful input and Selman Akbulut for identifying connections with the literature and, in particular, Remark \ref{remark:akbulut}. KH is supported by NSF grants DMS-1803584 and DMS-2114837.

\section{Preliminaries}


We begin with some background on link cobordisms and Khovanov homology, focusing on the Khovanov chain complex and the chain maps induced by a given movie of the link cobordism. Throughout the paper, we work with integral coefficients.

\subsection{Link cobordisms.}

A \textit{link cobordism} is a smooth, compact, oriented, properly embedded surface $\Sigma \subset \rr^3 \times [0,1]$ whose boundary is a pair of oriented links $L_0 \sqcup L_1 = \Sigma \cap (\rr^3 \times \{0,1\})$. We often write the link cobordism as a function $\Sigma : L_0 \to L_1$. 

To study a given link cobordism, we represent it as a \textit{movie}, that is, a finite sequence of link diagrams $D_0 = D_{t_0}, D_{t_1}, \dots, D_{t_n} = D_1$ having two properties: the boundary links $L_0$ and $L_1$ of the link cobordism are represented by the first and last diagrams $D_0$ and $D_1$ in the sequence; successive pairs of diagrams are related by a planar isotopy, Reidemeister move, or Morse move. From an arbitrary link cobordism, one can write down an associated movie (c.f., \cite[\S 3]{jacobsson}), however, in practice, we often choose a movie and study the link cobordism defined by the trace of the given moves. 

\subsection{Khovanov homology.}
Given a diagram $D$ of an oriented link $L$ and an enumeration of its crossings, we associate a chain complex $(\CKh(D), d)$ called the \textit{Khovanov chain complex}. We describe it here, attempting to avoid any cumbersome algebra. 

A \textit{smoothing} $\sigma$ is a planar $1$-manifold obtained by replacing each crossing $\crossing$ in $D$ with either a $0$-smoothing $\zsmooth$ or a $1$-smoothing $\osmooth$ \!. Using the enumeration of the crossings, $\sigma$ can be represented as a binary sequence $(\sigma_1, \dots, \sigma_n)$, where $\sigma_i \in \{0,1\}$ indicates that the $i$-th smoothing is $\sigma_i$-smoothed. We say a loop (i.e., connected component) in $\sigma$ is $0$-\textit{tracing} (or $1$-\textit{tracing}) if it intersects a $0$-smoothing (or $1$-smoothing). A \textit{labeled smoothing} $\alpha_\sigma$ is a labeling of the loops of the smoothing $\sigma$ with a $1$ or $x$. The chain group $\CKh(D)$ is generated over $\zz$ by the labeled smoothings of $D$.

The chain complex is bigraded $\CKh^{h,q}(D)$ by \textit{homological} grading $h$ and \textit{quantum} grading $q$.  Let $n_+$ and $n_-$ record the number of positive and negative crossings in $D$; let $|\sigma|$ record the number of $1$-smoothings in $\sigma$; let $v_+$ and $v_-$ record the number of $1$-labels and $x$-labels in $\alpha_\sigma$. Then $h$ and $q$ are defined on $\alpha_\sigma$ by
\begin{align*}
	h(\alpha_\sigma) &= |\sigma| - n_- \\
	q(\alpha_\sigma) &= v_+(\alpha_\sigma) - v_-(\alpha_\sigma) + h(\alpha_\sigma) + n_+ - n_-
\end{align*}

For a labeled smoothing $\alpha_\sigma$, the differential $d(\alpha_\sigma)$ will be a $\zz$-linear combination of labeled smoothings obtained as follows. 
First, consider the binary representation of the smoothing $\sigma = (\sigma_1, \dots, \sigma_n)$, and for each $i$ such that $\sigma_i = 0$, let $\sigma^i$ be the smoothing obtained by setting $\sigma_i = 1$. Note that $\sigma$ and $\sigma^i$ cobound a surface that is a product away from the $i$-th crossing, where it is a single Morse saddle. A	 labeled smoothing $\alpha_{\sigma^i}$ is obtained by applying the corresponding Morse induced chain map from Table \ref{table_morse} to $\alpha_\sigma$.  Let $\xi^i = \sum_{j<i} \sigma_j$. We then define the differential by the following formula: 
	$$d(\alpha_\sigma) = \sum_{\{i \, | \, \sigma_i = 0\}} (-1)^{\xi^i} \alpha_{\sigma^i}$$
The homology $\Kh(D)$ of the chain complex $(\CKh(D), d)$ is called the \textit{Khovanov homology}. Different diagrams for the same link yield isomorphic Khovanov homology groups. In later sections, we will write $\Kh(L)$ in place of $\Kh(D)$; the diagram $D$ in use will be clear from context. We recycle the notation of a cycle $\cycle \in \CKh(D)$ for the homology class it represents $\class \in \Kh(D)$, with membership being clear from context. In this work, we mainly consider Khovanov homology classes represented by a single labeled smoothing. We use the following to check whether a labeled smoothing is a cycle; it follows quickly from the definition of the differential (c.f., \cite[Prop. 3.2]{elliott}).

\begin{prop}\label{prop:cycle}
	A labeled smoothing $\alpha_\sigma$ is a cycle if and only if every $0$-smoothing in $\sigma$, when changed to a $1$-smoothing, merges two $x$-labeled loops.
\end{prop}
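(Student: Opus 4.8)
The plan is to exploit the fact that the differential decomposes as a sum indexed by the $0$-smoothed crossings of $\sigma$, with each summand supported on a distinct underlying smoothing. Concretely, the formula $d(\alpha_\sigma) = \sum_{\{i \, | \, \sigma_i = 0\}} (-1)^{\xi^i} \alpha_{\sigma^i}$ presents $d(\alpha_\sigma)$ as a sum whose $i$-th term lives in the smoothing $\sigma^i$ obtained by flipping the $i$-th entry of $\sigma$ from $0$ to $1$. Since distinct indices $i$ produce distinct binary sequences $\sigma^i$, the terms lie in subgroups of $\CKh(D)$ spanned by labeled smoothings over \emph{different} underlying smoothings, which are part of the distinguished $\zz$-basis of $\CKh(D)$. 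Consequently no cancellation can occur between terms with different $i$, and $d(\alpha_\sigma) = 0$ if and only if $\alpha_{\sigma^i} = 0$ for every $i$ with $\sigma_i = 0$.

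First I would reduce the problem to a local analysis of each individual Morse map. For a fixed $i$ with $\sigma_i = 0$, the smoothings $\sigma$ and $\sigma^i$ cobound a surface that is a single saddle supported near the $i$-th crossing and a product elsewhere. This saddle either merges two loops of $\sigma$ into one loop of $\sigma^i$ or splits one loop of $\sigma$ into two loops of $\sigma^i$; the induced map $\alpha_\sigma \mapsto \alpha_{\sigma^i}$ is the corresponding Frobenius multiplication or comultiplication from Table~\ref{table_morse}, acting on the labels of the affected loops and fixing the labels of all unaffected loops.

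Next I would carry out the resulting two-case computation. In the splitting case, comultiplication sends the label $1$ to $1 \otimes x + x \otimes 1$ and the label $x$ to $x \otimes x$, both nonzero, so $\alpha_{\sigma^i} \neq 0$ whenever the $i$-th saddle is a split. In the merging case, multiplication sends $1 \otimes 1 \mapsto 1$, $1 \otimes x \mapsto x$, and $x \otimes 1 \mapsto x$, all nonzero, while $x \otimes x \mapsto 0$; hence $\alpha_{\sigma^i} = 0$ precisely when the $i$-th saddle merges two $x$-labeled loops. Combining these evaluations with the reduction above gives the claim: $\alpha_\sigma$ is a cycle if and only if, for each $0$-smoothing of $\sigma$, changing it to a $1$-smoothing merges two $x$-labeled loops.

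The only step requiring genuine care is the first, namely justifying that the summands for distinct $i$ cannot cancel. This rests on the observation that labeled smoothings over distinct underlying smoothings are linearly independent basis elements, so vanishing of the total sum forces termwise vanishing; once this is in hand, the remainder is a direct evaluation of the Frobenius structure and presents no obstacle.
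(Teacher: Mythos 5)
Your proposal is correct and is exactly the argument the paper has in mind: the paper omits the proof, noting only that the claim ``follows quickly from the definition of the differential'' (citing Elliott), and your write-up supplies precisely that quick derivation. The two key points you make --- that summands $\alpha_{\sigma^i}$ for distinct $i$ lie in distinct smoothing-summands of the distinguished basis of $\CKh(D)$ and hence cannot cancel, and that the local Morse maps from Table~\ref{table_morse} vanish only when $m$ is applied to two $x$-labeled loops --- are the complete content of the intended proof.
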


For convenience, we will occasionally record the location of the $0$-smoothings in a given labeled smoothing $\alpha_\sigma$ by decorating each $0$-smoothing with a light gray arc that connects the relevant strands in the smoothing (e.g., see Figure \ref{fig:phi}). To check if $\alpha_\sigma$ is a cycle, it suffices to check if each arc connects a pair of distinct, $x$-labeled loops.

\subsection{Induced maps on Khovanov homology.} \label{subsec:induced_maps}

Given a pair of diagrams $D_0$ and $D_1$ representing the boundary links $L_0$ and $L_1$ of an oriented link cobordism $\Sigma : L_0 \to L_1$, we may associate a bigraded chain map
	$$\CKh(\Sigma) : \CKh^{h,q}(D_0) \to \CKh^{h,q+\chi(\Sigma)}(D_1)$$
with induced homomorphism $\Kh(\Sigma) : \Kh(D_0) \to \Kh(D_1)$. This paper hinges on the following invariance theorem proven by Jacobsson (c.f., \cite{khovanov:invariant,barnatan}).

\begin{thm}[\cite{jacobsson}] \label{thm:jacobsson}
	The homomorphism $\Kh(\Sigma) : \Kh(D_0) \to \Kh(D_1)$ is invariant up to multiplication by $\pm1$ under smooth isotopy of $\Sigma$ fixing $\partial \Sigma$ setwise.
\end{thm}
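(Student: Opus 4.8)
The plan is to reduce this global statement about smooth isotopies of surfaces to a finite, combinatorial check on diagrams. A smooth isotopy of $\Sigma$ rel $\partial\Sigma$ can be tracked through a deformation of its movie presentation, and by the Carter–Saito theorem — the four-dimensional analog of Reidemeister's theorem — any two movies representing isotopic (rel boundary) cobordisms are related by a finite sequence of elementary local modifications known as the \emph{movie moves} MM1--MM15, together with planar isotopies of individual frames and interchanges of critical events occurring in disjoint regions. It therefore suffices to show that each movie move leaves the induced chain map $\CKh(\Sigma)$ unchanged up to sign and chain homotopy; passing to homology then yields invariance of $\Kh(\Sigma)$ up to $\pm 1$.

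First I would pin down the chain-level map. Each frame transition in a movie is one of three types: a planar isotopy, inducing the identity after the canonical identification of labeled smoothings; a Reidemeister move, inducing one of Khovanov's explicit chain homotopy equivalences between the complexes of successive diagrams; or a Morse move — a birth, death, or saddle — inducing the unit, counit, and multiplication/comultiplication maps of the Frobenius algebra $A = \zz[x]/(x^2)$, i.e.\ the maps recorded in Table~\ref{table_morse}. The composite $\CKh(\Sigma)$ is then well-defined up to chain homotopy, and the homotopy indeterminacy vanishes on homology, so $\Kh(\Sigma)$ is at least well-defined for a fixed movie.

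Next I would organize the movie moves into three groups and verify each. (i) The moves built only from Reidemeister moves and their inverses — e.g.\ an $R$-move cancelling its reverse, or the cyclic and commutation relations among $R$-moves. Here one uses that the Reidemeister maps are homotopy equivalences with explicit homotopy inverses and checks that the two compositions agree up to chain homotopy; this is the invariance of Khovanov homology as a \emph{link} invariant, upgraded to track the maps. (ii) The moves in which a single Morse critical point slides past a Reidemeister move. When the critical point and the $R$-move occur in disjoint disks, invariance is immediate from locality; the remaining cases are finite diagram chases showing the birth/death/saddle map commutes, up to sign and homotopy, with the Reidemeister equivalence. (iii) The moves involving only Morse critical points — two saddles exchanging order, or a birth or death cancelling a saddle. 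These reduce to the defining relations of the Frobenius algebra (associativity and coassociativity, the (co)unit axioms, and the Frobenius/neck-cutting relation), checked directly on labeled smoothings.

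The hard part will be group (iii) together with the precise bookkeeping of signs throughout: the elementary maps carry Koszul-type signs, of the same flavor as the $(-1)^{\xi^i}$ in the differential, and one must confirm that across \emph{every} movie move the two resulting chain maps differ by an overall factor of exactly $\pm 1$, never by a nontrivial ratio. This is precisely where the ``up to sign'' weakening is forced: there is a movie move for which the naive local maps differ by $-1$, so no global choice of signs can remove the ambiguity. Proving that the discrepancy is always exactly $\pm 1$, while exhibiting the explicit chain homotopies required in groups (i) and (ii), is the technical heart of the argument and constitutes Jacobsson's computation (cf.\ also \cite{khovanov:invariant, barnatan}).
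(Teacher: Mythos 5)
The paper does not prove Theorem~\ref{thm:jacobsson} itself; it is quoted from \cite{jacobsson} (with pointers to \cite{khovanov:invariant,barnatan}), and your outline is a faithful reconstruction of exactly that cited argument: represent the isotopy by movies, invoke the Carter--Saito movie-move theorem, and verify invariance of the composite chain map up to sign and chain homotopy move-by-move, with the Frobenius-algebra relations handling the Morse-only moves and the sign ambiguity being genuinely unavoidable. So your proposal is correct in strategy and matches the approach of the source the paper defers to; the only caveat is that the case-by-case verification you rightly call the technical heart is deferred rather than carried out, which is all that separates your sketch from Jacobsson's full proof.
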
 

\begin{rem}
	
In this work, we study smooth, compact, oriented, properly embedded surfaces $\Sigma \subset B^4$ with boundary $L = \partial \Sigma$. To tailor the Khovanov invariant to these surfaces, we note that an analogous version of Theorem \ref{thm:jacobsson} was proven in \cite{morrison-walker-wedrich} for surfaces in $S^3 \times [0,1]$. Any isotopy of $\Sigma$ through $B^4$ induces an isotopy of surfaces in $S^3 \times [0,1]$ (e.g., by removing an open ball  in the complement of the support of the isotopy), so invariance extends naturally to our setting. However, note that the identification of $\partial B^4$ with $S^3 \times \{1\}$ or $S^3 \times \{0\}$ will produce different link cobordisms, with the former yielding a link cobordism $\Sigma : \emptyset \to L$ and the latter $\Sigma : L \to \emptyset$. Given a movie for one of these link cobordisms, we may produce a movie of the other by reversing the order of the diagrams.
\end{rem}

We now discuss the definition of the chain map $\CKh(\Sigma)$, attempting to avoid any cumbersome algebra. We follow the process outlined in \cite{barnatan}. The idea is to first define chain maps induced by the three diagrammatic relations used in a movie of $\Sigma$. Then, given a movie $D_0 = D_{t_0}, D_{t_1}, \dots, D_{t_n} = D_1$ of $\Sigma$, we produce a collection of chain maps induced by successive pairs of diagrams $\CKh(D_{t_i}) \to \CKh(D_{t_{i+1}})$. The desired chain map $\CKh(\Sigma)$ is the successive composition of these chain maps. It suffices to give explicit definitions for the chain maps induced by each of the diagrammatic relations: planar isotopies, Morse moves, and Reidemeister moves.

\textbf{Isotopy induced chain maps} \ The chain map induced by an isotopy of diagrams is defined on a labeled smoothing $\alpha_\sigma$ by applying the isotopy to the underlying smoothing $\sigma$ and preserving the labeling from $\alpha_\sigma$ of the components in $\sigma$ throughout this isotopy.

\textbf{Ornaments} \ We pause to develop a convenient shorthand from \cite{barnatan}. 
The Morse and Reidemeister moves only change a diagram locally within some tangle. As a result, for a labeled smoothing $\alpha_\sigma$, it suffices to define the induced chain maps on the portion of $\alpha_\sigma$ within this tangle, while leaving the rest of the labeled smoothing unchanged. In order to properly define the chain map, we must account for all possible smoothings of the tangle, as well as all possible labels for each smoothing. As a result, it is  convenient to have a shorthand that simplifies the amount of information necessary to express these maps. The idea is to reduce the definition to the level of smoothings by defining a set of local \textit{ornaments} that can be placed on a smoothing, each of which corresponds to a predetermined chain map on the portion of the smoothing it adorns. A chain map can then be defined on all possible labelings of a smoothing $\sigma$ by simply decorating $\sigma$ with these ornaments: to any given labeled smoothing $\alpha_\sigma$, apply each of the predetermined chain maps corresponding to the ornaments decorating $\sigma$.

The ornaments we need correspond, perhaps not surprisingly, to the three Morse moves: births, deaths, and saddles. A birth will locally add a crossingless unknot to an empty tangle; we decorate a smoothing with the ornament $\kcup$ consisting of a crossingless unknot with $4$ external antennae to indicate this addition. Similarly, a death removes a crossingless unknot, in which case we decorate the smoothing with the ornament $\kcap$ consisting of $4$ internal antennae adorning the component being removed. A saddle acts on a tangle with two unknotted arcs $\osmooth$ by either merging or splitting the component(s) to which the arcs belong; in either case, the result is a tangle $\zsmooth$. We decorate the smoothing with the ornament $\ksmooth$ consisting of a thin line that perpendicularly intersects the two components being merged or split. The chain maps induced by these ornaments are defined locally in Table \ref{table_morse}.

\begin{table}[!ht]\center
		\renewcommand{\arraystretch}{2.25}
		\begin{tabular}{|c|c|c|l|}
			\hline
			\text{Morse Move} & \text{Ornament} & \text{Chain map} &  \multicolumn{1}{c|}{\text{Definition of chain map}} \\
			\hline
			birth & \kcup & $\iota$ & $\begin{array}{l} \hspace{1.25em} 1 \hspace{1.3em} \mapsto \kocirc \vspace{.425em} \end{array}$ \\
			\hline
			death & \kcap & $\varepsilon$ & $\begin{array}{l} \kocirc \mapsto \hspace{1.25em} 0 \\ \kxcirc \mapsto \hspace{1.25em} 1 \vspace{.425em} \end{array}$ \\
			\hline
			\multirow{2}{80pt}{\hspace{2.15em}\vspace{-5em}saddle} & $\begin{array}{l} \vspace{-6em}\ksmooth \end{array}$ & $m$ & $\begin{array}{l} \kcobdoo \mapsto \kcobio \\ \kcobdox \mapsto \kcobix \\ \kcobdxo \mapsto \kcobix \\ \kcobdxx \mapsto \hspace{1.25em} 0 \vspace{.425em} \end{array}$ \\\cline{3-4} & & $\Delta$ & $\begin{array}{l} \kcobdo \mapsto \kcobiox + \kcobixo \\ \kcobdx \mapsto \kcobixx \vspace{.425em} \end{array}$ \\
			\hline
		\end{tabular}
		\renewcommand{\arraystretch}{1}
	\caption{The chain maps induced by Morse moves.\vspace{-5pt}}
	\label{table_morse}
\end{table}

One additional decoration $\kdot$ will be needed, consisting of a dot on any component of the smoothing. This decoration indicates the application of two saddles (one splitting and then one re-merging) on the decorated component. Using Table \ref{table_morse}, one can verify that the map induced by this local cobordism kills an $x$-labeled arc, but sends a $1$-labeled arc to twice an $x$-labeled arc.

\textbf{Morse induced chain maps} \ The chain map induced by a Morse move is defined on a labeled smoothing $\alpha_\sigma$ by decorating the smoothing $\sigma$ with the ornament corresponding to the given Morse move.

\textbf{Reidemeister induced chain maps} \ The chain map induced by a Reidemeister move is defined on a labeled smoothing $\alpha_\sigma$ by decorating the smoothing $\sigma$ with the ornaments given in Tables \ref{table_r1}-\hyperlink{table_r3}{5} in Appendix \ref{subsec:chain_maps}. As a given decoration can consist of multiple ornaments, there is a natural question of the order in which the corresponding chain maps should be applied; this will either be irrelevant (i.e., the moves and their induced maps commute) or clear from context (e.g., a dotted arc on a birth requires the birth to occur before the map induced by the dotted arc can be applied).

In this paper, we only use use complexity-reducing Reidemeister I and II moves (those that remove crossings). We list these chain maps here, in Table \ref{table_reidemeister_redux}.

\begin{table}[!ht]\center
	\renewcommand{\arraystretch}{2.25}
	\begin{tabular}{|c|c|c|}
		\hline
		Reidemeister move & Smoothing & Induced chain map \\
		\hline
		\multirow{2}{80pt}{\begin{minipage}{9em}
				$\kropos \to \kroarc$
			\end{minipage}} & \krosa & \kroma $\begin{array}{c}\hspace{-1em}\vspace{.425em}\end{array}$ \\\cline{2-3} & \krosb & $\begin{array}{c} 0 \vspace{.425em} \end{array}$ \\
		\hline
		\multirow{2}{80pt}{\begin{minipage}{9em}
				$\kroneg \to \kroarc$
		\end{minipage}} & \krosa & \raisebox{.1em}{$\begin{array}{c} \frac12 \Bigg( \kromc \Bigg) \vspace{.25em} \end{array}$} \\\cline{2-3} & \krosb & $\begin{array}{c} 0 \vspace{.425em} \end{array}$ \\
		\hline
		\multirow{2}{80pt}{\begin{minipage}{9em}
				$\krtcr \to \krtcrl$
		\end{minipage}} & \krtsa & $-$ \!\! \krtma $\begin{array}{c}\hspace{.25em}\vspace{.425em}\end{array}$ \\\cline{2-3} & \krtsb & \krtcrl $\begin{array}{c}\hspace{-1em}\vspace{.425em}\end{array}$ \\\cline{2-3} & \krtsc & $\begin{array}{c} 0 \vspace{.425em} \end{array}$ \\\cline{2-3} & \krtsd & $\begin{array}{c} 0 \vspace{.425em} \end{array}$ \\
		\hline
	\end{tabular}
	\renewcommand{\arraystretch}{1}
	
	\caption{The relevant chain maps induced by Reidemeister I and II moves.\vspace{-5pt}}
	\label{table_reidemeister_redux}
\end{table}

\begin{rem}
	Note the $\frac12$ in the definition of the Reidemeister I induced chain map in Table \ref{table_reidemeister_redux} does not conflict with $\zz$ as our coefficient group: the dotted arc will always produce an even coefficient, so overall, the map will maintain an integral coefficient.
\end{rem}

\subsection{Local knotting.} The cobordism-induced maps are invariant under boundary-preserving isotopy as well as another operation: a link cobordism is \textit{locally knotted} if it can be written as $\Sigma \# S$ for a surface $\Sigma$ and a knotted $2$-sphere $S \subset \rr^3 \times [0,1]$. Locally knotting a surface will generally change the boundary-preserving isotopy class of the surface. The following theorem guarantees that any detection obtained by the cobordism-induced maps on Khovanov homology is not due to this simple operation.

\begin{thm}
	The cobordism-induced maps on Khovanov homology are invariant under local knotting: given a link cobordism $\Sigma : L_0 \to L_1$ and a knotted $2$-sphere $S$, the induced maps $\Kh(\Sigma)$ and $\Kh(\Sigma \# S)$  agree up to multiplication by $\pm1$.
\end{thm}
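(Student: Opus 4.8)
The plan is to reduce the statement to a single local evaluation by cutting the tube that realizes the connected sum. First I would localize the operation: the connected sum is supported in a small ball $B \subset \rr^3 \times [0,1]$ disjoint from $\partial \Sigma$, inside which $\Sigma$ appears as a standard trivial disk. Pushing $S$ off into a ball disjoint from $\Sigma$, the surface $\Sigma \# S$ is isotopic rel boundary to $\Sigma \sqcup S$ joined by a thin connect-sum tube $T$ running from a point of $\Sigma$ to a point of $S$. Since all moves take place away from $\partial \Sigma$, it suffices to express $\Kh(\Sigma \# S)$ in terms of $\Kh(\Sigma)$ and the closed surface $S$.

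Next I would apply the neck-cutting relation to the annulus $T$. Cutting $T$ and capping the two resulting circles writes $\Kh(\Sigma \# S)$ as a sum of two terms: one in which the cap on the $\Sigma$-side carries a dot while $S$ is capped off undotted, and one in which the cap on the $S$-side carries a dot while $\Sigma$ is capped off undotted. Because the Khovanov TQFT is monoidal under disjoint union and $S$ is closed, each term factors as an integer multiple of a map induced by $\Sigma$. Writing $\langle S\rangle$ and $\langle S_\bullet\rangle$ for the integers by which the undotted and once-dotted copies of $S$ act on $\Kh(\emptyset) = \zz$, this gives
$$\Kh(\Sigma \# S) \;=\; \langle S\rangle \cdot \Kh(\Sigma_\bullet) \;+\; \langle S_\bullet\rangle \cdot \Kh(\Sigma).$$
The first term vanishes for grading reasons: the map induced by the closed surface $S$ raises the quantum grading by $\chi(S) = 2$, carrying $\Kh^{0,0}(\emptyset)$ into $\Kh^{0,2}(\emptyset) = 0$, so $\langle S\rangle = 0$. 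Hence $\Kh(\Sigma \# S) = \langle S_\bullet\rangle \cdot \Kh(\Sigma)$, and the theorem reduces to showing $\langle S_\bullet\rangle = \pm 1$.

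The remaining step, which I expect to be the main obstacle, is to evaluate the once-dotted knotted sphere. A dot lowers the quantum grading by $2$, so $S_\bullet$ acts on $\Kh(\emptyset)$ preserving bidegree; thus $\langle S_\bullet\rangle$ is a well-defined integer, but grading alone does not pin down its value. To compute it I would split $S$ along a circle into a knotted slice disk $D_S \colon \emptyset \to U$ of the unknot together with a trivial cap carrying the dot, which yields $\langle S_\bullet\rangle = a$, where grading forces $\Kh(D_S)(1) = a \cdot \mathbf{1}_U$ to be a multiple of the $(0,1)$-generator of $\Kh(U)$. The crux is therefore to show that the relative class of an arbitrary slice disk of the unknot is a unit multiple of the standard generator, i.e.\ that $a = \pm 1$. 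This cannot be read off from the combinatorics of a single movie, since knotting forces extra saddles whose contributions must be shown to cancel; I expect to close it using the Lee deformation, where a connected cobordism between knots induces an isomorphism carrying canonical generators to canonical generators up to sign (and powers of $2$), which for the unknot forces $a = \pm 1$. Substituting $\langle S_\bullet\rangle = \pm 1$ into the displayed identity then gives $\Kh(\Sigma \# S) = \pm \Kh(\Sigma)$.
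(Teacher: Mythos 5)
Your reduction is sound and genuinely different from the paper's. The paper drags the ball containing the knotted piece down to $L_0$, writes the locally knotted cobordism as a split cobordism $C \sqcup (S \setminus \mathring{D}^2)$ followed by a merging saddle, and finishes with monoidality and isotopy invariance. You instead apply neck-cutting to the connect-sum tube in Bar-Natan's dotted cobordism category (legitimate, since the induced maps factor through the quotient by the local relations), and your grading argument killing the undotted-sphere term is correct: a closed sphere shifts quantum degree by $+2$ on $\Kh(\emptyset)$, hence acts by zero. This cleanly localizes the whole theorem into the single integer $\langle S_\bullet\rangle$, which, as you observe, equals the coefficient $a$ in the relative class $\Kh(D_S)(1)=a\cdot\mathbf{1}_U$ of the punctured knotted sphere.

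The gap is in the final step, and it sits exactly where the paper also does not argue from scratch: the paper cites Sundberg--Swann (their Theorem 4.2) for the statement that a punctured $2$-knot induces the same map as the standard disk, i.e.\ $a=\pm1$. Your proposed Lee-theoretic substitute does not yet prove this. First, the result you invoke concerns cobordisms \emph{between knots}, whereas $D_S$ is a cobordism $\emptyset \to U$; one must puncture $D_S$ into a concordance $U \to U$ and then descend from Lee homology back to Khovanov homology via the filtration (the Khovanov map being the leading term of the Lee map). Second, and more seriously, ``canonical generators go to canonical generators up to sign and powers of $2$'' does not pin down $a$: writing $1=\tfrac12(\mathfrak{s}_o+\mathfrak{s}_{\bar o})$, the image is $\tfrac12(c_+\mathfrak{s}_{o'}+c_-\mathfrak{s}_{\bar o'})$, and unless you know $c_+=c_-$ with common value exactly $\pm1$ (not $\pm2^k$, and with coherent signs), you cannot rule out $a=0$ (leading terms cancel) or $|a|>1$. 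The sharp quantitative statement needed --- diagonal action with coefficient $\pm2^{\mathrm{genus}}$, hence $\pm1$ for a concordance --- is true but is precisely the nontrivial content you are trying not to cite. Two repairs: within your framework, cite Rasmussen's/Tanaka's theorem that every closed genus-one surface evaluates to $\pm2$, attach a trivial handle to $S$, and neck-cut it to get $2\langle S_\bullet\rangle=\langle S\#T\rangle=\pm2$; or simply cite the same Sundberg--Swann theorem the paper uses, which would make your argument a complete and attractive alternative to the paper's proof.
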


\begin{proof}
In the case where $L_0 = \emptyset$, the induced map $\Kh(L_0) \to \Kh(L_1)$ is determined by the relative Khovanov-Jacobsson class of the surface. By \cite[Theorem~4.2]{sundberg-swann}, relative Khovanov-Jacobsson classes are invariant under local knotting. 
This argument can be adapted to the case where $L_0 \neq \emptyset$. Let $B$ be a $4$-ball intersecting $\Sigma \# S$ along the disk $S \setminus \mathring{D}^2$ bounded by an unknot $U$ in $\partial B \cong S^3$. We may perform a boundary-preserving isotopy of $\Sigma \# S$ that drags $B$ near $L_0$. It then suffices to show that locally knotting the product cobordism $C: L_0 \to L_0$ induces the identity map. We can isolate $B$ so that $C \# S$ decomposes into a link cobordism $C \sqcup (S \setminus \mathring{D}^2): L_0 \to L_0 \sqcup U$ followed by a saddle merging $L_0$ and $U$. By \cite{sundberg-swann}, the map induced by $S \setminus \mathring{D}^2$ is identical to the map induced by the link cobordism induced by a standard $D^2$ in $B$. Moreover, the map on Khovanov homology induced by a split cobordism will split as the tensor product of the individual cobordism-induced maps, so $C \sqcup (S \setminus \mathring{D}^2)$ induces the same map as $C \sqcup D^2$. Stacking the saddle on the latter cobordism yields a surface isotopic to $C$ rel boundary, so by Theorem~\ref{thm:jacobsson} they induce the same map, as desired.
\end{proof}


\section{Distinguishing cobordism maps}\label{sec:examples}


In this section, we obstruct the smooth, boundary-preserving isotopy of pairs of surfaces $\Sigma,\Sigma'$ bounded by a common knot $K$ by viewing them as cobordisms $K \to \emptyset$ and distinguishing their associated induced maps $\Kh(\Sigma),\Kh(\Sigma'):\Kh(K) \to \zz$, which are invariants of smooth boundary-preserving isotopy by Theorem~\ref{thm:jacobsson}. In particular, we provide a class $\class \in \Kh(K)$ which is mapped to $1$ under $\Kh(\Sigma)$ and $0$ under $\Kh(\Sigma')$. 

\begin{rem}\label{rem:finding-phi}
At present, producing such cycles $\cycle$ is more art than science. 
We typically began with the orientation-induced smoothing where $0$-tracing loops are $x$-labeled and all other loops are $1$-labeled.\footnote{In certain cases, this labeled smoothing corresponds to Plamenevskaya's invariant.} This labeled state is always a cycle lying in homological grading $h=0$. However, it may not have the desired quantum grading; a surface $\Sigma$ induces a $(0, \chi(\Sigma))$-graded map, so a cycle must lie in $\CKh^{0,-\chi(\Sigma)}(K)$ in order for it to be mapped to the $(0,0)$-supported chain group $\CKh(\emptyset) = \zz$. While the homological grading (and the underlying diagram) determines the overall balance of 0- and 1-resolutions, the quantum grading can be adjusted by varying the specific choice of crossing resolutions (which may change the number of loops in the smoothing) and the labeling of loops. We made these adjustments keeping in mind that the result should be a cycle and should be killed by the map induced by one band move but not the other. In our core cases, the slice disks are related by a symmetry of the knot; making asymmetric adjustments to the orientation-induced smoothing helped produce the desired cycle.
\end{rem}

\subsection{A core example.}\label{subsec:core} We now distinguish the cobordism maps induced by the disks $D$ and $D'$ from Figure~\ref{fig:main-disks}. The surfaces in Section \ref{subsec:ribbon-genus} and Theorem \ref{thm:main} are all extensions of this initial example, as are the Khovanov-theoretic computations that distinguish them. 

\begin{thm}\label{thm:diff}
	The disks $D$ and $D'$ induce distinct maps on Khovanov homology, hence are not smoothly isotopic rel boundary.
\end{thm}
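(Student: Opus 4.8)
The strategy is the direct, dual approach advertised in the introduction: regard $D$ and $D'$ as link cobordisms $J \to \emptyset$, where $J = \partial D = \partial D'$, and exhibit a single class $\phi \in \Kh(J)$ that is detected by one disk but annihilated by the other. Since $\CKh(\emptyset) = \zz$ is supported in bidegree $(0,0)$ with trivial differential, the induced maps $\Kh(D), \Kh(D') : \Kh(J) \to \zz$ are read off at the chain level: for any cycle $\phi$, the integers $\Kh(D)(\phi)$ and $\Kh(D')(\phi)$ are literally $\CKh(D)(\phi)$ and $\CKh(D')(\phi)$. It therefore suffices to produce a cycle $\phi \in \CKh^{0,-1}(D_J)$ --- the bidegree forced by $\chi(D) = \chi(D') = 1$ via the constraint of Remark~\ref{rem:finding-phi} --- for which these two integers disagree.

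First I would fix a diagram $D_J$ of $J$ together with explicit movies for $D$ and $D'$, each presenting the disk as a sequence of saddle (band) moves and complexity-reducing Reidemeister~I and~II moves carrying $J$ to a crossingless unlink, followed by deaths capping off the resulting unknots. The two movies should differ only in a way reflecting the knot symmetry interchanging the disks, so that the essential difference is localized in the initial band move. I would then build $\phi$ following Remark~\ref{rem:finding-phi}: start from the orientation-induced labeled smoothing ($0$-tracing loops labeled $x$, the rest labeled $1$), which is automatically a cycle in homological grading $0$, and make \emph{asymmetric} adjustments --- altering selected crossing resolutions and loop labels --- to reach quantum grading $-1$ while preserving the cycle condition. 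Proposition~\ref{prop:cycle} is the check at each stage: every $0$-smoothing, when switched to a $1$-smoothing, must merge two distinct $x$-labeled loops. The adjustments are to be engineered so that the first band move of the $D'$-movie joins two distinct $x$-labeled loops, while the corresponding band move of the $D$-movie does not.

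Next I would push $\phi$ through each movie using Tables~\ref{table_morse} and~\ref{table_reidemeister_redux}. For $D'$ I expect the computation to die almost immediately: the saddle map $m$ applied to the two $x$-labeled loops joined by the offending band returns $0$, so $\CKh(D')(\phi) = 0$. For $D$ I would track the labeled smoothing all the way to the final deaths, verifying that exactly one surviving circle carries an $x$ at each death (so the death map $\varepsilon$ returns $1$), and that the accumulated signs together with the factor of $\tfrac12$ from any Reidemeister~I maps combine to a nonzero integer, which I expect to be $\pm 1$. The conclusion is then immediate: if $\CKh(D)(\phi) = \pm 1$ and $\CKh(D')(\phi) = 0$, the two maps disagree on $[\phi]$, so $\Kh(D) \neq \pm \Kh(D')$; in particular $[\phi] \neq 0$ in $\Kh(J)$, since a null-homologous class would vanish under both. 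By Theorem~\ref{thm:jacobsson}, smoothly isotopic (rel boundary) surfaces induce maps equal up to sign, so $D$ and $D'$ are not smoothly isotopic rel boundary.

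The main obstacle is the construction of $\phi$. As Remark~\ref{rem:finding-phi} stresses, this is more art than science: the class must simultaneously sit in bidegree $(0,-1)$, satisfy the stringent cycle condition of Proposition~\ref{prop:cycle}, and break the interchanging symmetry in exactly the way that separates the two initial band moves. A secondary difficulty is bookkeeping --- faithfully tracking the labeled smoothing, the sign data $(-1)^{\xi^i}$, and the $\tfrac12$ coefficients through every Morse and Reidemeister step of the $D$-movie, where a single mis-resolved crossing or misordered ornament could corrupt the final integer.
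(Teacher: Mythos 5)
Your strategy coincides exactly with the paper's proof: view $D$ and $D'$ as cobordisms $J \to \emptyset$, exhibit a cycle $\phi \in \CKh^{0,-1}(J)$ that is killed by the first band move of $D'$ (because the saddle map $m$ sends two merging $x$-labeled loops to zero) but sent to $\pm 1$ by the movie for $D$, and conclude via Theorem~\ref{thm:jacobsson}. Your framing of the grading constraint, the chain-level identification of the induced maps (valid because $\CKh(\emptyset)=\zz$ is concentrated in bidegree $(0,0)$ with trivial differential), and the observation that $\pm1 \neq 0$ makes the conclusion insensitive to the overall sign ambiguity are all correct.

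However, as written your argument has a genuine gap, and it is precisely the step you defer: you never produce the cycle $\phi$. The existence of a class in bidegree $(0,-1)$ that satisfies the cycle condition of Proposition~\ref{prop:cycle}, is annihilated by the band $b'$, and survives the full movie for $D$ with nonzero image is the entire mathematical content of the theorem; it cannot follow from the symmetry-breaking discussion alone, since for a smoothly isotopic pair of disks no such class can exist (by Theorem~\ref{thm:jacobsson}), so some genuinely example-specific input is unavoidable. The paper supplies exactly this input: it displays $\phi$ explicitly (Figure~\ref{fig:phi}), checks the cycle condition with Proposition~\ref{prop:cycle}, observes that the band $b'$ merges two distinct $x$-labeled loops of $\phi$ so that the saddle map kills it, and carries out the full movie computation $\CKh(D)(\phi) = 1$ in Figure~\ref{fig:cob-map}. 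Until you exhibit such a $\phi$ on a concrete diagram of $J$ and run both computations, you have a correct plan --- the same plan as the paper's --- but not a proof.
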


\begin{proof}
	The left side of Figure~\ref{fig:phi} depicts the knot $J$ (decorated with bands $b$ and $b'$ corresponding to the disks $D$ and $D'$), while the right side of the figure depicts a distinguished chain element $\cycle \in \CKh(J)$. By Proposition~\ref{prop:cycle}, the chain $\cycle$ is a cycle; in particular, all arcs corresponding to 0-smoothings join two distinct $x$-labeled loops.
	
	\begin{figure}\center
		\def\svgwidth{.65\linewidth}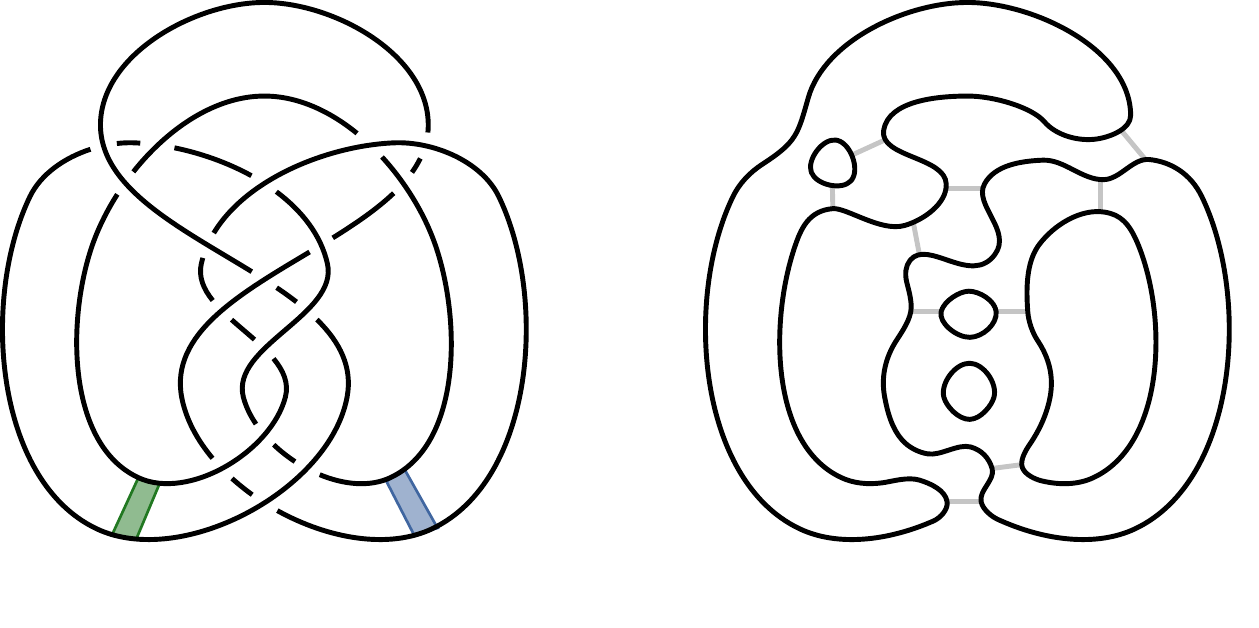
		\caption{The knot $J$ with band moves $b$ and $b'$ describing the pair of slices $D$ and $D'$, together with a cycle $\cycle$ distinguishing their induced maps on Khovanov homology.}\label{fig:phi}
	\end{figure}

	We claim that $\cycle$ vanishes under the map induced by the cobordism $D': J \to \emptyset$. This cobordism begins with a saddle move along the band $b'$. The associated cobordism map merges two distinct $x$-labeled loops of $\cycle$, hence maps $\cycle$ to $0$. On the other hand, we claim that $\cycle$ is mapped to $1 \in \zz = \CKh(\emptyset)$  under the map induced by the cobordism $D: J \to \emptyset$. This calculation is carried out in Figure~\ref{fig:cob-map}.
\end{proof}

\subsection{Further examples.}\label{subsec:further} A similar technique can be applied to other pairs of slices. We give two such examples in Figure~\ref{fig:further}, which depicts pairs of slices of the knots $m(9_{46})$ and $15n_{103488}$. In each case, we provide a knot diagram decorated with a pair of bands describing the slices, as well as a cycle $\cycle$ in the chain complex associated to the diagram. As before, one slice will kill $\cycle \mapsto 0$ and the other sends $\cycle \mapsto 1$. 

\vspace{12pt}
\begin{figure}[h]\center
	\def\svgwidth{\linewidth}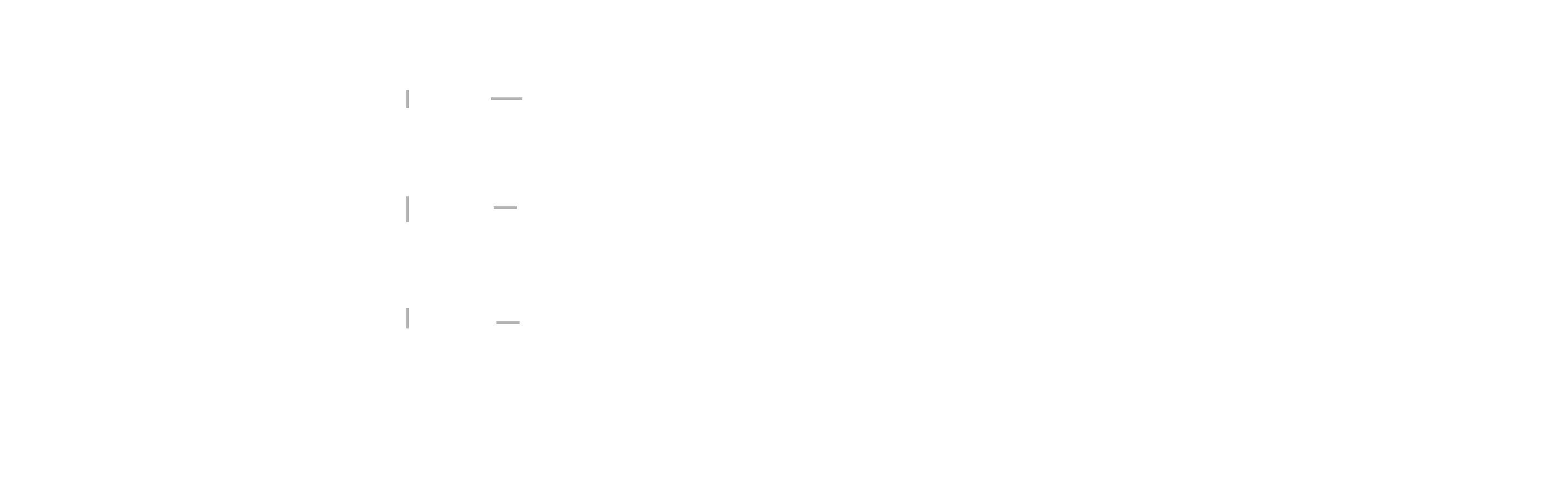
	\vspace{-4pt}
	\caption{The knots (a) $m(9_{46})$ and (b) $15n_{103488}$ with band moves describing slices for each knot, distinguished by the behavior of their induced maps on the given cycle.}\label{fig:further}
\end{figure}

Note that these slices of $m(9_{46})$ and $15n_{103488}$ can be distinguished by their \emph{peripheral maps} (borrowing terminology from  \cite[Definition~3.9]{juhaszzemke}), i.e., the map on fundamental groups induced by including the knot complement into the slice disk complement. It follows that these pairs of slices are not even \emph{topologically} isotopic rel boundary.
%

\begin{figure}
	\hspace{-.25in} \includegraphics[width=1.1\linewidth]{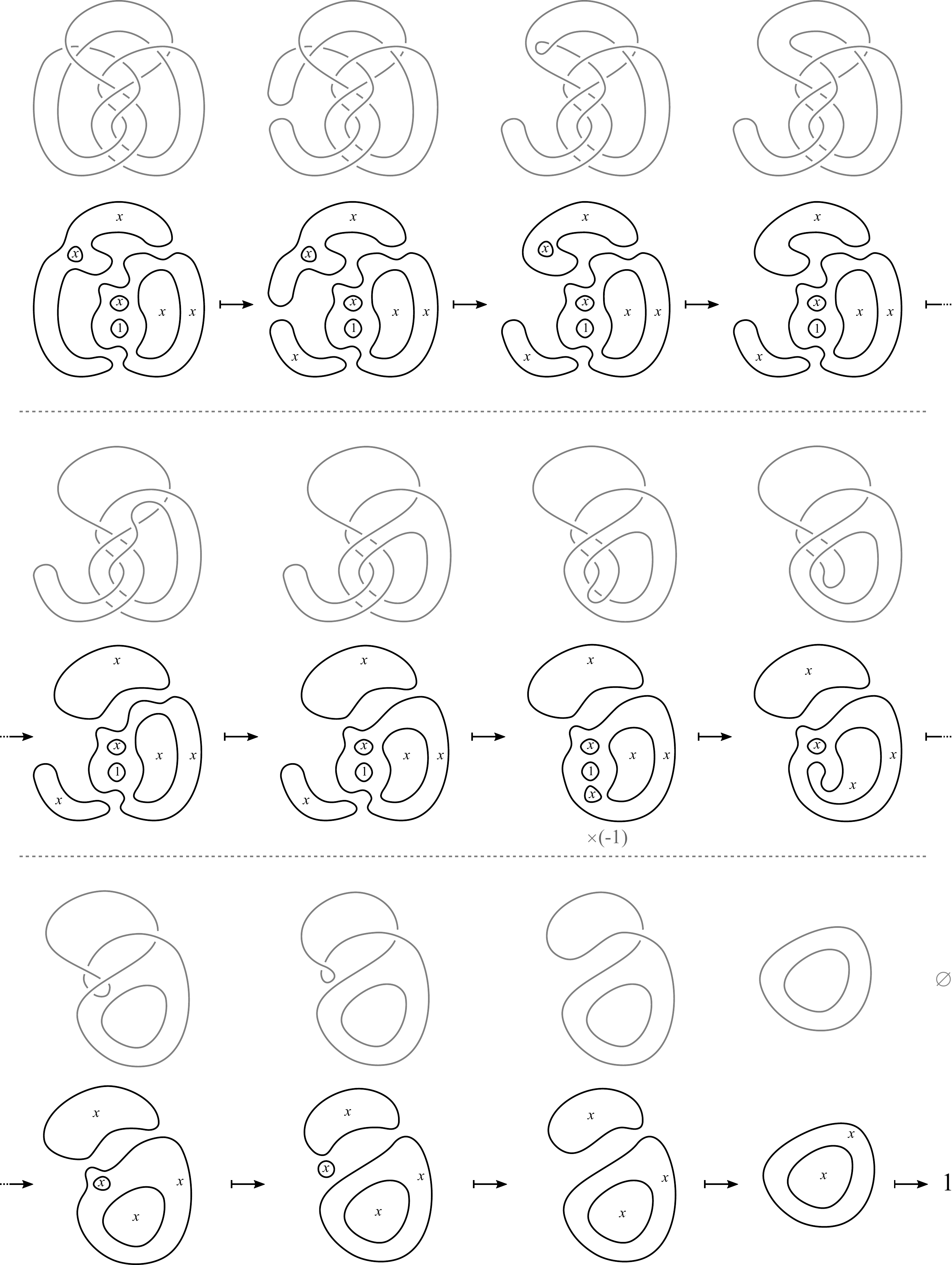}
	\caption{A movie description of the slice disk $D$ and the behavior of the distinguished cycle $\cycle \in \CKh(J)$ under the cobordism map induced by this slice.}\label{fig:cob-map}
\end{figure}

\subsection{Ribbon concordance and higher-genus examples.} \label{subsec:ribbon-genus} The obstruction described above is robust in the sense that it persists when a surface is enlarged by a ribbon concordance (i.e., a link concordance that has no local maxima) or, in many cases, by adding positively twisted bands to increase the genus of the surfaces.

\begin{thm}[Levine-Zemke \cite{levine-zemke}]
If $C$ is a ribbon concordance from $L_1$ to $L_2$, the induced map $\Kh(C): \Kh(L_1) \to \Kh(L_2)$ is injective, 
with left inverse induced by the reverse of $C$, viewed as a cobordism from $L_2$ to $L_1$.
\end{thm}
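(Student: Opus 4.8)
The plan is to exploit functoriality together with the geometry of the reverse cobordism. Write $\bar C \colon L_2 \to L_1$ for $C$ read backwards, so that $\Kh(\bar C)\colon \Kh(L_2) \to \Kh(L_1)$. Since cobordism maps compose (up to sign), it suffices to prove that the self-cobordism $\bar C \circ C \colon L_1 \to L_1$ induces the identity on $\Kh(L_1)$; the resulting relation $\Kh(\bar C)\circ \Kh(C) = \pm\id$ then exhibits $\Kh(\bar C)$ as a left inverse of $\Kh(C)$ and forces $\Kh(C)$ to be injective.

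First I would put $C$ into a standard ribbon movie. Because a ribbon concordance has no local maxima, its Morse function has only index-$0$ and index-$1$ critical points, and after rearrangement I may assume the movie performs $n$ births — producing split unknots $U_1, \dots, U_n$ — followed by $n$ saddles along bands $\beta_1, \dots, \beta_n$ that fuse the $U_i$ into the main component to yield $L_2$. Reading $\bar C$ forward then gives $n$ saddles along the reversed bands $\bar\beta_i$ (splitting the $U_i$ back off) followed by $n$ deaths. Stacking, the composite $\bar C\circ C$ is described by the movie: births $U_i$; bands $\beta_j$; reverse bands $\bar\beta_j$; deaths $U_i$.

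The heart of the argument is to simplify this movie to the product cobordism $\id_{L_1} = L_1\times[0,1]$ rel boundary. Each saddle $\beta_i$ is paired with its geometric reverse $\bar\beta_i$, and a saddle followed immediately by its reverse along the same band is isotopic to a product (the intervening tube is trivial and may be pushed off). Likewise the birth of each $U_i$ cancels against its death once the two are made adjacent. The task is therefore to reorder the $2n$ saddles so that each cancelling pair $\beta_i,\bar\beta_i$ becomes consecutive. Here the ribbon hypothesis is essential: in the standard form each band attaches a distinct trivial circle $U_i$ to the evolving knot, so the relevant handles have disjoint attaching regions and their associated saddles commute up to isotopy. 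Carrying out these commutations and cancellations reduces $\bar C\circ C$ to $\id_{L_1}$; any unknotted or knotted spheres split off during the simplification may be discarded using the local-knotting invariance established above, so that $\Kh(\bar C\circ C)=\pm\Kh(\id_{L_1})=\pm\id$ by Theorem~\ref{thm:jacobsson}.

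I expect the main obstacle to be exactly this reordering and cancellation step. For a general, non-ribbon concordance the reverse stack need not reduce to the identity — the composite can be a genuinely knotted self-concordance — so the argument must use the ribbon structure in an honest way, carefully tracking the supports of the bands and verifying that the commutations preserve the isotopy class rel boundary. Once the composite is identified with the product cobordism (up to local knotting), the algebraic conclusion about injectivity and the left inverse is immediate.
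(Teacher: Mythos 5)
The paper itself does not prove this statement---it quotes it from Levine--Zemke \cite{levine-zemke}---so your proposal must be measured against their argument. Your opening reduction is exactly the right one and matches theirs: by functoriality it suffices to show that the composite $\bar C \circ C \colon L_1 \to L_1$ induces $\pm\id$ on $\Kh(L_1)$. The gap is in the geometric step that follows. A saddle followed immediately by its reverse along the same band is \emph{not} isotopic rel boundary to a product: it is the product with a tube (an embedded 1-handle) attached, joining the two sheets involved. Consequently, after your commutations the composite $\bar C \circ C$ is isotopic not to $L_1 \times [0,1]$ but to the product annulus with $n$ unknotted 2-spheres (each the double of the birth disk of $U_i$) attached to the annulus by tubes that follow the ribbon bands $\beta_i$. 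These tubes cannot be ``pushed off'': deleting one changes the topology of the surface, and since $\beta_i$ may be an arbitrarily knotted band, the tube's core can represent a nontrivial element of the fundamental group of the surface complement. In general $\bar C \circ C$ is genuinely \emph{not} isotopic rel boundary to the product (one can see this at the level of $\pi_1$ of the complements, which for the composite is an amalgam that typically differs from $\pi_1(S^3 \setminus L_1)$). Relatedly, a birth cannot be Morse-cancelled against a death---indices $0$ and $2$ are not adjacent---and even ambient $0/1$-handle cancellation rel boundary fails in general; if it did not, every ribbon concordance would be the trace of an isotopy.

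Your fallback step also fails, and in a way that would reverse the conclusion: the local-knotting invariance theorem in Section 2 concerns connected sums $\Sigma \# S$, not disjoint unions, and a \emph{split} closed 2-sphere does not act trivially---it induces the zero map. (Its induced map $\zz \to \zz$ must shift quantum grading by $\chi(S^2) = 2$, hence vanishes; equivalently $\varepsilon \circ \iota = 0$ in Table 1.) So if your simplification to ``product $\sqcup$ spheres'' were an honest isotopy, you would conclude $\Kh(\bar C \circ C) = 0$, not $\pm\id$. The missing idea---and the actual content of Levine--Zemke's proof---is algebraic rather than isotopy-theoretic: having identified $\bar C \circ C$ with the product-plus-tubed-spheres surface, one applies local relations valid in Khovanov's Frobenius algebra over $\zz$, namely neck-cutting ($\id_V = x\cdot \iota\varepsilon + \iota\varepsilon\circ (x\, \cdot)$, where $x\,\cdot$ denotes multiplication by $x$), the sphere relation (a closed sphere evaluates to $0$), and the dotted-sphere relation (a sphere carrying one $x$ evaluates to $1$). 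Cutting the neck of each tube shows that a tubed unknotted sphere contributes exactly a factor of $1$ to the cobordism map, no matter how knotted the tube is; hence $\Kh(\bar C \circ C) = \pm\id$, giving the left inverse and injectivity. It is precisely because the tubes cannot be isotoped away---but can be cut algebraically---that the theorem requires this TQFT input.
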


\begin{cor}\label{cor:ribbon}
Let $\Sigma$ and $\Sigma'$ be cobordisms from $L_0$ to $L_1$ and let $C$ be a ribbon concordance from $L_1$ to $L_2$. If $\Sigma$ and $\Sigma'$ induce distinct maps $\Kh(L_0) \to \Kh(L_1)$, then the cobordisms $C \circ \Sigma$ and $C \circ \Sigma'$ induce distinct maps $\Kh(L_0) \to \Kh(L_2)$.

Similarly, if the reverses of $\Sigma$ and $\Sigma'$ induce distinct maps $\Kh(L_1) \to \Kh(L_0)$, then the reverses of $C \circ \Sigma$ and $C \circ \Sigma'$ induce distinct maps $\Kh(L_2) \to \Kh(L_0)$.
\end{cor}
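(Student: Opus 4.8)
The plan is to deduce both statements directly from the Levine--Zemke theorem together with the functoriality of the cobordism maps, $\Kh(C \circ \Sigma) = \pm \Kh(C) \circ \Kh(\Sigma)$, which is built into the construction of $\CKh(\Sigma)$ as a composition of the maps induced by successive movie moves. The only genuine bookkeeping here is the sign ambiguity: each cobordism map is well-defined only up to $\pm 1$, so the hypothesis $\Kh(\Sigma) \neq \pm \Kh(\Sigma')$ must be read as ``no choice of signs makes the two maps agree,'' and this reading must be propagated carefully through the argument. I would make that discipline explicit up front and then absorb all the accumulated sign ambiguities from functoriality into a single sign $\eta$ at the appropriate moment.

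For the first statement I would argue by contradiction. Assuming $\Kh(C \circ \Sigma) = \pm\, \Kh(C \circ \Sigma')$ and factoring both sides through functoriality yields $\Kh(C) \circ \Kh(\Sigma) = \eta\, \Kh(C) \circ \Kh(\Sigma')$ for a single sign $\eta$, hence $\Kh(C) \circ \big( \Kh(\Sigma) - \eta\, \Kh(\Sigma') \big) = 0$. Since the Levine--Zemke theorem says $\Kh(C)$ is injective, the linear map $\Kh(\Sigma) - \eta\, \Kh(\Sigma')$ must vanish on all of $\Kh(L_0)$, so $\Kh(\Sigma) = \eta\, \Kh(\Sigma')$, contradicting the hypothesis that $\Sigma$ and $\Sigma'$ induce maps distinct even up to sign.

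For the second statement I would first record that the reverse of $C \circ \Sigma$ is $\overline{\Sigma} \circ \overline{C}$, where $\overline{C} : L_2 \to L_1$ is the reverse ribbon concordance and $\overline{\Sigma} : L_1 \to L_0$. The key input now is dual: the left-inverse property $\Kh(\overline{C}) \circ \Kh(C) = \id$ from Levine--Zemke forces $\Kh(\overline{C})$ to be \emph{surjective}, since $\id$ (up to sign) factors through it. Running the same contradiction argument, $\Kh(\overline{\Sigma}) \circ \Kh(\overline{C}) = \eta\, \Kh(\overline{\Sigma'}) \circ \Kh(\overline{C})$ gives $\big( \Kh(\overline{\Sigma}) - \eta\, \Kh(\overline{\Sigma'}) \big) \circ \Kh(\overline{C}) = 0$; because $\Kh(\overline{C})$ is onto, the left factor vanishes on all of $\Kh(L_1)$, contradicting the hypothesis on the reverses.

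I do not expect a real obstacle here beyond the sign tracking: the substantive content is entirely imported from Levine--Zemke, with injectivity of $\Kh(C)$ driving the first part and the resulting surjectivity of its left inverse $\Kh(\overline{C})$ driving the second. The mild subtlety worth double-checking is simply that the ambient sign indeterminacy never lets a nonzero $\Kh(\Sigma) - \eta\, \Kh(\Sigma')$ slip through, which is exactly why I fold everything into the single sign $\eta$ before invoking injectivity or surjectivity.
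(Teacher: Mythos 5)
Your proof is correct and follows essentially the same route as the paper: both parts rest on the Levine--Zemke theorem, using injectivity of $\Kh(C)$ for the forward cobordisms and surjectivity of the map induced by the reverse of $C$ (as a left inverse) for the reversed ones. Your contradiction framing with the explicit sign $\eta$ is just the contrapositive of the paper's direct argument, which evaluates the difference $\Kh(\Sigma)-\Kh(\Sigma')$ on a witness class $\alpha$; the extra sign bookkeeping you include is a reasonable (and slightly more careful) way of handling the same point.
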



\begin{proof}
If $\Sigma$ and $\Sigma'$ induce distinct maps on Khovanov homology when viewed as cobordisms $L_0 \to L_1$, there must be an element $\alpha \in \Kh(L_0)$ such that $\Kh(\Sigma)(\alpha) \neq \Kh(\Sigma')(\alpha)$. Since $C$ induces an injective map $\Kh(L_1) \to \Kh(L_2)$, we have
\begin{equation*}
\Kh(C \circ \Sigma)(\alpha)-\Kh(C \circ \Sigma')(\alpha) =\Kh(C)\left(\Kh(\Sigma)(\alpha)-\Kh(\Sigma')(\alpha)\right) \neq 0.
\end{equation*}
An analogous argument applies to the reversed cobordisms, appealing instead to the surjectivity of the map $\Kh(L_2)\to \Kh(L_1)$ induced by the reverse of $C$.
\end{proof}

\begin{rem}
A similar (independently established) technique is used in \cite{sundberg-swann} for finding prime knots with an arbitrarily large number of distinct (but non-exotic) slices. Moreover, a similar technique appears in \cite{juhaszzemke} for an invariant from \cite{juhaszmarengon} in knot Floer homology.
\end{rem}

\begin{ex}[Asymmetric slice knots]\label{ex:asym} 

For any $m \in \zz$, there is a ribbon concordance $C$ from $J$ to the knot $J_m$ depicted on the left side of Figure~\ref{fig:asym}; in reverse, we obtain $J$ from $J_m$ by performing the gray band move, which splits off an unknot that is capped with a disk.  By Corollary~\ref{cor:ribbon}, the slice disks $D_m$ and $D_m'$ obtained by gluing $C$ to $D$ and $D'$, respectively, induce distinct maps on Khovanov homology. In fact, for $m \geq 0$, it is straightforward to identify a class in $\Kh(J_m)$ that distinguishes these maps (see Figure~\ref{fig:asym}), whereby $D_m$ and $D_m'$ are not smoothly isotopic rel boundary.

Unlike the examples in \S\ref{subsec:core}-\ref{subsec:further}, which involve slice knots with nontrivial symmetries, the knots $J_m$ are \emph{asymmetric}. That is, every self-diffeomorphism of the pair $(S^3,J_m)$ is isotopic (through diffeomorphisms of the pair) to the identity. This is proven in the appendix \S\ref{subsec:asym} for $m \gg 0$, but similar arguments to the ones given there also establish the claim for all $m$. \hfill $\diamond$
\end{ex}

\begin{figure}[b]\center
	\def\svgwidth{.925\linewidth}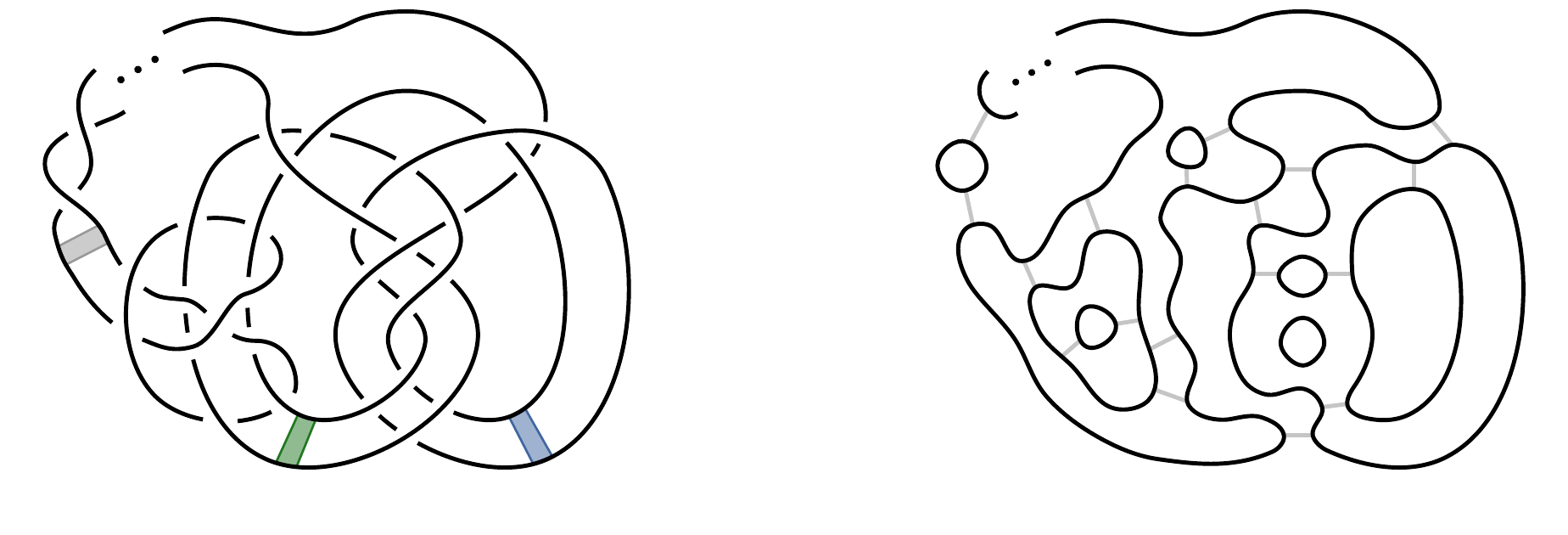
	\caption{An asymmetric version $J_m$ of the knot $J$, having slices $D_m$ and $D_m'$ distinguished by the behavior of their induced maps on the given cycle.}\label{fig:asym}
\end{figure}

\begin{ex}[Higher-genus surfaces] \label{ex:higher-genus}
Fix integers $m,n \geq 0$ and let $J_{m,n}$ be the knot shown on the left side of Figure~\ref{fig:higher-genus-clasp}; there are a total of $m$ full left-handed twists on the left side of the knot and $n$ full right-handed twists on the right side of the knot. 

Performing $2n$ saddle moves (along the gray bands shown on the left side of Figure~\ref{fig:higher-genus-clasp}) yields a cobordism of genus $n$ from $J_{m,n}$ back to the knot $J_m$ from Example~\ref{ex:asym}. It is straightforward to check that the map induced by this cobordism sends the cycle $\theta \in \CKh(J_{m,n})$ shown on the right side of Figure~\ref{fig:higher-genus-clasp} to the cycle $\cycle \in \CKh(J_m)$. Gluing this cobordism to the disks $D_m$ and $D_m'$ bounded by $J_m$ yields a pair of slice surfaces of genus $n$ for $J_{m,n}$, which we denote by $\Sigma_{m,n}$ and $\Sigma'_{m,n}$, respectively. By composing the cobordism maps, we see that $\Kh(\Sigma_{m,n})(\theta) = 1$ and $\Kh(\Sigma'_{m,n})(\theta) = 0$. It follows that $\Sigma_{m,n}$ and $\Sigma'_{m,n}$ are not smoothly isotopic rel boundary. \hfill $\diamond$

\end{ex}
\medskip

\begin{ex}[Boundary-sums] \label{ex:connected-sums}
Our calculations all extend to boundary-sums. In the above calculations, we give pairs of surfaces $\Sigma$ and $\Sigma'$ with boundary $K$ distinguished by a class $\phi \in \Kh(K)$. For $n > 0$, we encourage the reader to produce a cycle $\#_n\phi \in \Kh(\#_nK)$ that distinguishes the $2^n$ surfaces bounding $\#_nK$, obtained by boundary-summing different collections of $\Sigma$ and $\Sigma'$. \hfill $\diamond$
\end{ex}

\begin{figure}[t]\center
\def\svgwidth{\linewidth}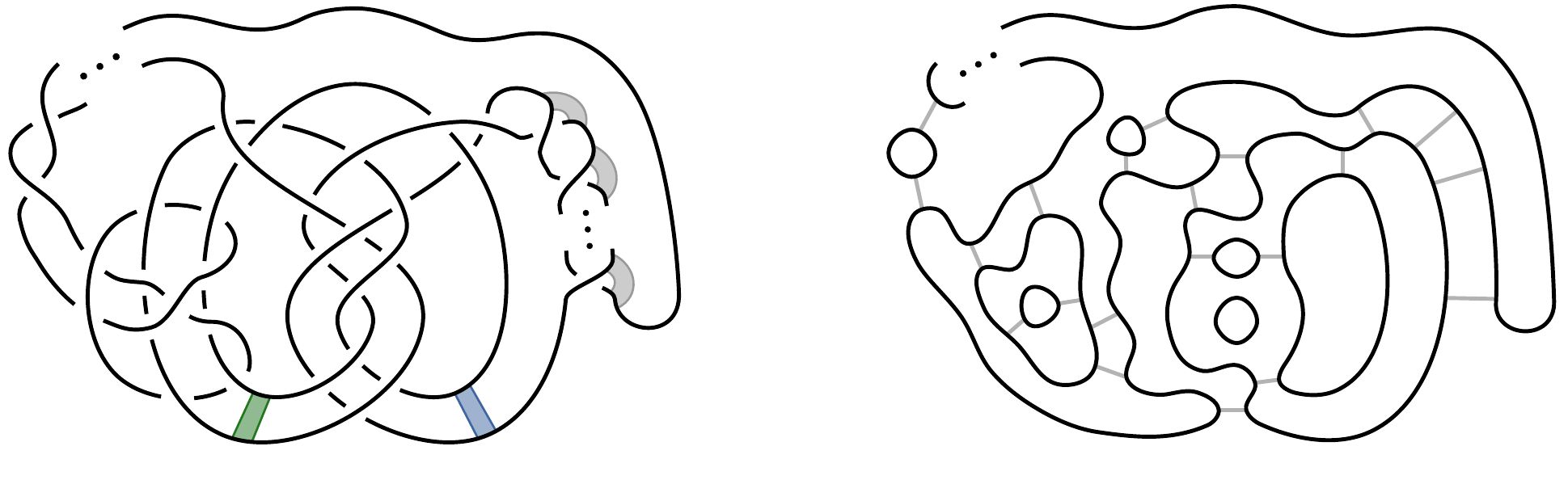
\caption{A higher-genus version $J_{m,n}$ of $J_m$, bounding surfaces $\Sigma_{m,n}$ and $\Sigma_{m,n}'$ distinguished by the behavior of their induced maps on the given cycle.\vspace{-10pt}}\label{fig:higher-genus-clasp}
\end{figure}

\section{Exotically knotted surfaces}


In this section, we prove Theorem~\ref{thm:main}. Our surfaces will be drawn from the examples in \S\ref{sec:examples}, especially the core examples $D$ and $D'$ from Figure~\ref{fig:main-disks}, hence the Khovanov-theoretic obstructions are already in place. Thus our discussion will focus on two remaining problems: (1) showing that the surfaces in question are topologically isotopic rel boundary, and (2) distinguishing the surfaces up to smooth isotopies of $B^4$ that do not fix the boundary. For the first task, we will rely on the following result of Conway and Powell.

\begin{thm}[\cite{conway-powell}]\label{thm:conway-powell}
Any smooth, properly embedded disks in $B^4$ with the same boundary and whose complements have $\pi_1 \cong \zz$ are topologically isotopic rel boundary.
\end{thm}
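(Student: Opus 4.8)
The plan is to translate the isotopy problem for the two disks into a homeomorphism problem for their exteriors, resolve the latter using topological surgery for the good group $\zz$, and then convert a homeomorphism of pairs into an ambient isotopy by the Alexander trick.

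First I would replace the disks by their exteriors. Writing $K = \partial D = \partial D'$, set $X = B^4 \setminus \nu(D)$ and $X' = B^4 \setminus \nu(D')$. Each is a compact topological $4$-manifold whose boundary is the \emph{same} $3$-manifold $Y$, namely the $0$-framed surgery on $K$, and whose fundamental group is $\zz$ by hypothesis, generated by a meridian of the disk. A Poincar\'e--Lefschetz duality computation shows that each exterior has the integral homology of $S^1$. Since both exteriors are integral homology circles with $\pi_1 \cong \zz$ that meet $Y$ in the same meridional data, a standard argument yields a homotopy equivalence $f \colon X \to X'$ restricting to the identity on $Y$; this is the formal part of the argument, with all the genuine content in the next step. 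The whole problem now reduces to improving $f$ to a homeomorphism rel boundary.

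This realization step is the crux, and it is exactly where Freedman--Quinn topological surgery is indispensable. Because $\zz$ has polynomial growth it is a \emph{good} group, so Freedman's disk embedding theorem holds and the topological surgery exact sequence and $s$-cobordism theorem are available for $4$-manifolds with fundamental group $\zz$. I would run $f$ through the surgery exact sequence for $X$ rel $\partial$: the normal invariant in $[X/\partial X,\, G/\mathrm{TOP}]$ is pinned down by the homology of the exterior, and the indeterminacy coming from the action of $L_5(\zz[\zz])$ together with the surgery obstruction in $L_4(\zz[\zz])$ is controlled using the Shaneson splitting $L_n(\zz[\zz]) \cong L_n(\zz) \oplus L_{n-1}(\zz)$ and the vanishing of $\mathrm{Wh}(\zz)$. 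Verifying that these obstructions vanish (equivalently, that the relevant structure set is a single point) shows that $f$ is homotopic rel boundary to a homeomorphism $F \colon X \to X'$. I expect this to be the main obstacle: the bookkeeping of normal data and the verification that every surgery and Whitehead obstruction is trivial is delicate, and it is precisely the embedded Whitney disks furnished by the good-group hypothesis that make the argument close.

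Finally I would upgrade $F$ to an isotopy of disks. Since $\pi_1 \cong \zz$ is meridionally generated, $F$ necessarily carries the meridian of $D$ to that of $D'$, so it extends over the trivial normal $D^2$-bundles to give a homeomorphism of pairs $(B^4, D) \to (B^4, D')$ that is the identity on $\partial B^4 \supset K$. Regarded as a self-homeomorphism of $B^4$ fixing $\partial B^4$ pointwise, it is topologically isotopic to the identity rel boundary by the Alexander trick, and this ambient isotopy carries $D$ onto $D'$ while fixing $K$ throughout --- exactly the desired topological isotopy rel boundary.
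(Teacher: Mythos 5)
The paper itself contains no proof of this statement: it is imported wholesale from Conway--Powell \cite{conway-powell}, and the only thing proved in the paper is its application (Proposition~\ref{prop:top}). So your proposal can only be compared against the argument in the cited source, and at the level of strategy it is the same one: pass to the disk exteriors, whose common boundary is the $0$-surgery on $K$; produce a homeomorphism of exteriors rel boundary via Freedman--Quinn topological surgery, which applies because $\zz$ is a good group; reglue the trivial normal bundles to obtain a homeomorphism of pairs fixing $\partial B^4$ pointwise; and finish with the Alexander trick (correctly noting that the resulting ambient isotopy is only topological, which is all that is claimed).

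There are, however, two genuine gaps. First, the step you dismiss as ``the formal part'' is not formal, and your stated justification does not give it: two compact $4$-manifolds with the same boundary, fundamental group $\zz$, and the integral homology of $S^1$ are not homotopy equivalent rel boundary for formal reasons. What is actually needed is that each exterior is \emph{aspherical}, i.e. $\pi_2(X)\cong H_2(X;\zz[\zz])=0$, so that each is homotopy equivalent to $S^1$; only then does obstruction theory let one extend the identity of the boundary to a (simple, using $\mathrm{Wh}(\zz)=0$) homotopy equivalence of pairs. Establishing this asphericity is a real lemma, proved with Poincar\'e--Lefschetz duality with $\zz[\zz]$-coefficients (torsion-freeness of $\pi_2$, vanishing of its coinvariants, and a Krull-intersection argument), and it is also exactly where the hypothesis $\pi_1\cong\zz$ forces $\Delta_K=1$; your outline never mentions the equivariant homology at all, and without it the surgery step has no input. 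Second, in the surgery step the crux is not the normal-invariant computation (which is easy, as you indicate) but the assertion that the rel-boundary structure set is a single point, equivalently that $L_5(\zz[\zz])\cong L_4(\zz)\cong\zz$ acts trivially on it. This does not follow from the Shaneson splitting and $\mathrm{Wh}(\zz)=0$ by themselves; it is precisely the Freedman--Quinn classification input for $\pi_1\cong\zz$ (alternatively, an s-cobordism argument) that the cited paper supplies. You flag this as the main obstacle but offer no idea for closing it, so as written the proposal is an outline of the correct strategy rather than a proof.
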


\begin{prop}\label{prop:top}
The  slice disks $D$ and $D'$ are topologically isotopic rel boundary.
\end{prop}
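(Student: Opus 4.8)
The plan is to apply the theorem of Conway and Powell (Theorem~\ref{thm:conway-powell}). Since $D$ and $D'$ are smooth, properly embedded disks in $B^4$ with the same boundary $\partial D = \partial D' = J$, it suffices to show that the two complements $B^4 \setminus \nu(D)$ and $B^4 \setminus \nu(D')$ each have fundamental group isomorphic to $\zz$. I would first dispatch the homology: for any slice disk of a knot, Alexander duality (equivalently, half-lives-half-dies) gives $H_1(B^4 \setminus \nu(D)) \cong \zz$, generated by a meridian of $J$. Consequently it is enough to prove that $\pi_1(B^4 \setminus \nu(D))$ is \emph{abelian}, since an abelian group with abelianization $\zz$ must itself be $\zz$.

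Next I would compute $\pi_1$ directly from the band description in Figure~\ref{fig:phi}. Each disk is ribbon: viewing $D$ as the cobordism $\emptyset \to J$, it consists of two births producing a two-component unlink followed by a single saddle along the band $b$ yielding $J$, so that $\chi(D) = 2 - 1 = 1$, as required of a disk. The complement then deformation retracts onto a $2$-complex carrying a presentation of deficiency one, with two meridional generators $x_1, x_2$ (meridians of the two capping disks) and a single relation of the form $x_1 = w\, x_2\, w^{-1}$, where the word $w$ records how the band $b$ threads through the diagram of $J$. I would read $w$ off from Figure~\ref{fig:phi} and carry out the resulting Tietze reduction, verifying that the relation collapses to $x_1 = x_2$, so that $\pi_1 \cong \langle x_1 \rangle \cong \zz$.

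Finally, for $D'$ I would invoke the symmetry of the pair $(S^3, J)$ that interchanges the bands $b$ and $b'$: applying this symmetry throughout the movie carries the ribbon-disk construction of $D$ to that of $D'$, inducing a diffeomorphism $B^4 \setminus \nu(D) \cong B^4 \setminus \nu(D')$ and hence $\pi_1(B^4 \setminus \nu(D')) \cong \zz$ as well. (Alternatively, one simply repeats the computation of the previous paragraph verbatim with the band $b'$ in place of $b$.) With both complements having fundamental group $\zz$, Theorem~\ref{thm:conway-powell} yields a topological isotopy rel boundary, completing the proof.

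I expect the main obstacle to be the explicit $\pi_1$ computation, namely extracting the conjugating word $w$ from the band in Figure~\ref{fig:phi} and confirming that the resulting one-relator group Tietze-reduces to $\zz$ rather than to a nonabelian group — a priori a genuine danger, since nonabelian one-relator groups (e.g.\ torus-knot groups) can share the abelianization $\zz$. The symmetry step, by contrast, should be routine once the diffeomorphism relating $b$ and $b'$ is made precise, and it conveniently halves the diagrammatic bookkeeping.
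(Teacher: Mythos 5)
Your overall strategy coincides with the paper's: invoke Conway--Powell (Theorem~\ref{thm:conway-powell}), show each disk exterior has fundamental group $\zz$, and handle $D'$ by the symmetry of $(S^3,J)$ interchanging the two bands (this is precisely the $180^\circ$ rotation that the paper applies to the handle diagram of the exterior of $D$). Your preliminary reduction — $H_1$ of a slice disk exterior is $\zz$, so it suffices to prove $\pi_1$ is abelian — is correct but buys essentially nothing here: the group to be analyzed has the one-relator form $\langle x_1,x_2 \mid x_1 = w\, x_2\, w^{-1}\rangle$, whose abelianization is automatically $\zz$, so proving abelianness is exactly as hard as collapsing the relation.

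The gap is that the decisive step is only promised, never performed. The entire content of the proposition rests on the claim that the conjugating word $w$ read off from the band $b$ reduces the presentation to $\zz$, and, as you yourself note, this can genuinely fail for ribbon disks of exactly this shape: the square knot bounds a one-band, two-minima ribbon disk (the spun trefoil disk) whose exterior group is the trefoil group — abelianization $\zz$, but nonabelian. So "read $w$ off from Figure~\ref{fig:phi} and carry out the Tietze reduction" \emph{is} the proof, and it is absent from your write-up. The paper does exactly this work, in diagrammatic rather than algebraic language: it draws the handle diagram of the exterior via the recipe of \cite[\S 6.2]{GompfStipsicz4} (dotted circles for the minima of $D$, one 2-handle for the saddle), performs three crossing changes of the 2-handle's attaching curve through itself — harmless for $\pi_1$, which depends only on the homotopy class of the attaching map — and then isotopes the diagram until the 2-handle visibly cancels a 1-handle, leaving $S^1\times B^3$ (Figure~\ref{fig:pi1}). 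That manipulation is the same computation as your proposed Tietze reduction, just actually carried out; your argument needs its analogue (either the explicit word $w$ and its reduction, or the diagrammatic simplification) to be complete.
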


\begin{proof}
By construction, the disks $D$ and $D'$ have the same boundary. By Theorem~\ref{thm:conway-powell}, it then suffices to show that the disk exteriors have $\pi_1 \cong \zz$.  

A handle diagram for the first disk exterior $B^4 \setminus \mathring{N}(D)$ is shown on the left side of Figure~\ref{fig:pi1}, obtained using the recipe from \cite[\S6.2]{GompfStipsicz4}. To simplify our calculation, we recall that $\pi_1$ is not changed under homotopy of the attaching curves for 2-handles. After three crossing changes of the 2-handle's attaching curve, we obtain the second diagram in Figure~\ref{fig:pi1}. The rightmost diagram, obtained by further isotopy, shows that the modified 2-handle  can be cancelled with a 1-handle. This leaves a single 0-handle and 1-handle, representing $S^1 \times B^3$, which has $\pi_1 \cong \zz$. It follows that $\pi_1(B^4 \setminus D) \cong \zz$.

\begin{figure}[b]\center
	\def\svgwidth{\linewidth}
\begingroup%
  \makeatletter%
  \providecommand\color[2][]{%
    \errmessage{(Inkscape) Color is used for the text in Inkscape, but the package 'color.sty' is not loaded}%
    \renewcommand\color[2][]{}%
  }%
  \providecommand\transparent[1]{%
    \errmessage{(Inkscape) Transparency is used (non-zero) for the text in Inkscape, but the package 'transparent.sty' is not loaded}%
    \renewcommand\transparent[1]{}%
  }%
  \providecommand\rotatebox[2]{#2}%
  \newcommand*\fsize{\dimexpr\f@size pt\relax}%
  \newcommand*\lineheight[1]{\fontsize{\fsize}{#1\fsize}\selectfont}%
  \ifx\svgwidth\undefined%
    \setlength{\unitlength}{1047.90668638bp}%
    \ifx\svgscale\undefined%
      \relax%
    \else%
      \setlength{\unitlength}{\unitlength * \real{\svgscale}}%
    \fi%
  \else%
    \setlength{\unitlength}{\svgwidth}%
  \fi%
  \global\let\svgwidth\undefined%
  \global\let\svgscale\undefined%
  \makeatother%
  \begin{picture}(1,0.19861938)%
    \lineheight{1}%
    \setlength\tabcolsep{0pt}%
    \put(0,0){\includegraphics[width=\unitlength,page=1]{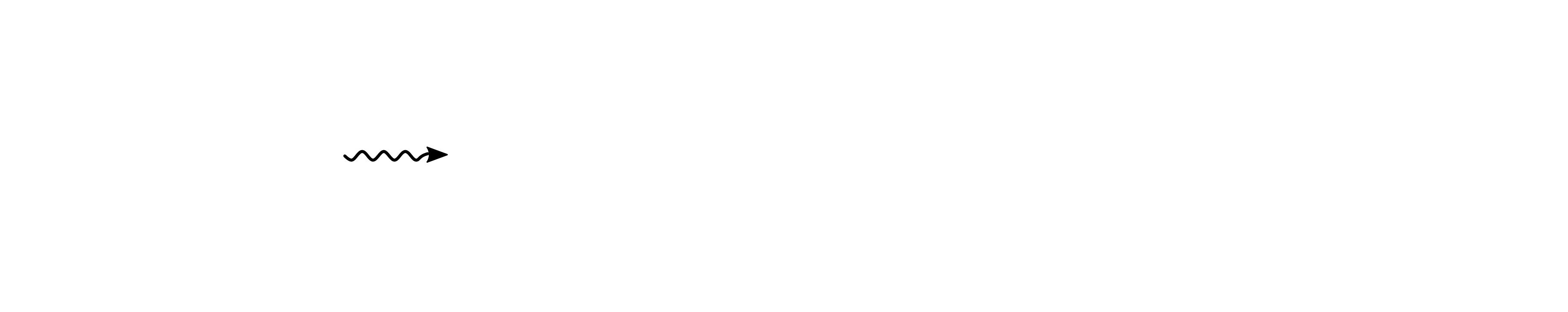}}%
    \put(0.53133795,0.09081762){\makebox(0,0)[lt]{\lineheight{1.25}\smash{\begin{tabular}[t]{l}$=$\end{tabular}}}}%
    \put(-0.00216671,0.00796895){\color[rgb]{0,0.50196078,0}\makebox(0,0)[lt]{\lineheight{1.25}\smash{\begin{tabular}[t]{l}$0$\end{tabular}}}}%
    \put(0.30128432,0.00796895){\color[rgb]{0,0.50196078,0}\makebox(0,0)[lt]{\lineheight{1.25}\smash{\begin{tabular}[t]{l}$0$\end{tabular}}}}%
    \put(0.57425412,0.02296232){\color[rgb]{0,0.50196078,0}\makebox(0,0)[lt]{\lineheight{1.25}\smash{\begin{tabular}[t]{l}$0$\end{tabular}}}}%
    \put(0.78183707,0.09081762){\makebox(0,0)[lt]{\lineheight{1.25}\smash{\begin{tabular}[t]{l}$=$\end{tabular}}}}%
    \put(0,0){\includegraphics[width=\unitlength,page=2]{images/pi1.pdf}}%
    \put(0.81759667,0.02296232){\color[rgb]{0,0.50196078,0}\makebox(0,0)[lt]{\lineheight{1.25}\smash{\begin{tabular}[t]{l}$0$\end{tabular}}}}%
  \end{picture}%
\endgroup%
	\caption{The exterior of $D$ has the homotopy type of $S^1$, as shown by performing a homotopy of the 2-handle's attaching curve followed by isotopy of the modified diagram.}\label{fig:pi1}
\end{figure}

A handle diagram for the exterior of $D'$ is obtained from that of $D$ by applying a 180$^\circ$ rotation through a vertical line, so an analogous argument shows $\pi_1(B^4 \setminus D') \cong \zz$.
\end{proof}

Combining Proposition~\ref{prop:top} with Theorem~\ref{thm:diff} immediately yields the following. 

\begin{cor}\label{cor:main}
The slice disks $D$ and $D'$ are exotically knotted rel boundary. 
\end{cor}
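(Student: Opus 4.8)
The plan is to read this off directly from the two results that bracket it, since ``exotically knotted rel boundary'' is by definition the conjunction of two properties: that the disks are topologically isotopic rel boundary, yet not smoothly isotopic rel boundary. So I would simply verify these two halves in turn.

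First I would invoke Proposition~\ref{prop:top}, which supplies the topological half: $D$ and $D'$ are topologically isotopic rel boundary. This half needs nothing further, as the substantive work there—computing via handle calculus that both disk exteriors have $\pi_1 \cong \zz$ and then feeding this into the Conway--Powell theorem (Theorem~\ref{thm:conway-powell})—is already complete.

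Next I would invoke Theorem~\ref{thm:diff}, which provides the smooth half: $D$ and $D'$ induce distinct maps $\Kh(D), \Kh(D') : \Kh(J) \to \zz$ on Khovanov homology, so that $\Kh(D) \neq \pm\,\Kh(D')$. Because these induced maps are invariants of smooth isotopy rel boundary up to an overall sign (Theorem~\ref{thm:jacobsson}), this inequality obstructs the existence of any smooth isotopy rel boundary carrying $D$ to $D'$. The geometric content here—the explicit cycle $\cycle \in \CKh(J)$ that is sent to $1$ by the cobordism map of $D$ but annihilated by that of $D'$—is likewise already in place.

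Combining these two observations yields precisely the defining properties of an exotic pair, and the proof is complete. I do not anticipate any genuine obstacle at this stage: the corollary is a formal consequence, and all the difficulty has been absorbed into Proposition~\ref{prop:top} and Theorem~\ref{thm:diff}.
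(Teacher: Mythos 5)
Your proposal is correct and matches the paper's argument exactly: the corollary is stated immediately after the sentence ``Combining Proposition~\ref{prop:top} with Theorem~\ref{thm:diff} immediately yields the following,'' which is precisely your two-part decomposition into the topological half (Conway--Powell via $\pi_1 \cong \zz$) and the smooth half (distinct induced maps on Khovanov homology, invariant under smooth isotopy rel boundary by Theorem~\ref{thm:jacobsson}). Nothing is missing.
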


To establish the stronger conclusions of Theorem~\ref{thm:main}, we wish to distinguish surfaces in the 4-ball up to arbitrary ambient isotopy (and not merely isotopy rel boundary). Fortunately, if a knot $K$ has no nontrivial symmetries, then an ambient isotopy between surfaces bounded by $K$  can be promoted to an ambient isotopy rel boundary. To make this  precise,  let $\Diff(S^3,K)$ denote the group of diffeomorphisms of $S^3$ that fix $K$ setwise. The \emph{symmetry group} of a knot $K$ in $S^3$, denoted $\Sym(K)$, is the quotient of the group $\Diff(S^3,K)$ by the normal subgroup of diffeomorphisms that are isotopic to the identity through diffeomorphisms of the pair $(S^3,K)$.

%

\begin{lem}\label{lem:asym}
Let $K$ be a knot in $S^3$ with trivial symmetry group $\Sym(K) = \{\id\}$. If $K$ bounds properly embedded surfaces $\Sigma$ and $\Sigma'$ in $B^4$ that are ambiently isotopic, then $\Sigma$ and $\Sigma'$ are also ambiently isotopic rel boundary.
\end{lem}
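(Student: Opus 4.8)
The plan is to convert the given (boundary-moving) ambient isotopy into one that fixes the boundary throughout, using the triviality of $\Sym(K)$ to cancel the motion of $K$ at the cost of a diffeomorphism supported near $\partial B^4$.

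First I would record the hypothesis as follows: an ambient isotopy carrying $\Sigma$ to $\Sigma'$ is the same as a diffeomorphism $\Phi$ of $B^4$ that is smoothly isotopic to $\id_{B^4}$ and satisfies $\Phi(\Sigma) = \Sigma'$. Restricting to the boundary gives $\phi := \Phi|_{S^3} \in \Diff(S^3, K)$, since $\Phi$ carries $\partial\Sigma = K$ to $\partial\Sigma' = K$. Because $\Sym(K) = \{\id\}$, the element $\phi^{-1}$ is isotopic to the identity through diffeomorphisms of the pair $(S^3, K)$; I fix such a path $\{k_s\}_{s \in [0,1]}$ in $\Diff(S^3, K)$ with $k_0 = \id_{S^3}$ and $k_1 = \phi^{-1}$.

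Next I would choose a collar $c \colon S^3 \times [0,1) \hookrightarrow B^4$ of $\partial B^4$ adapted to $\Sigma'$, i.e.\ one in which $\Sigma'$ appears as the product $c(K \times [0,1))$ near the boundary (possible since $\Sigma'$ meets $S^3$ transversally along $K$). Using a bump function $\beta$ with $\beta(0) = 1$ supported near $0$, I would define a collar-supported ambient isotopy $\{H_s\}$ of $B^4$ by $H_s(c(x,u)) = c(k_{s\beta(u)}(x), u)$ and $H_s = \id$ off the collar; then $H_0 = \id_{B^4}$ and $H_s|_{S^3} = k_s$. The crucial point is that each $k_{s\beta(u)}$ preserves $K$ setwise, so $H_s$ preserves every product level $c(K \times \{u\})$, and therefore $H_1(\Sigma') = \Sigma'$. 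Setting $\Psi := H_1 \circ \Phi$, I obtain a diffeomorphism of $B^4$ with $\Psi|_{S^3} = k_1 \circ \phi = \id_{S^3}$ and $\Psi(\Sigma) = H_1(\Sigma') = \Sigma'$; moreover $\Psi$ is isotopic to $\id_{B^4}$, being a composition of two diffeomorphisms ($\Phi$ and $H_1$) that are.

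The hard part will be the final upgrade: I now have a diffeomorphism $\Psi$ that fixes $\partial B^4$ pointwise, carries $\Sigma$ to $\Sigma'$, and is isotopic to $\id_{B^4}$, but to conclude that $\Sigma$ and $\Sigma'$ are ambiently isotopic rel boundary I need $\Psi$ to be isotopic to $\id_{B^4}$ \emph{through diffeomorphisms fixing $\partial B^4$ pointwise}. This is exactly the potential discrepancy between $\pi_0\Diff(B^4, \partial B^4)$ and $\pi_0 \Diff(B^4)$, governed by the restriction fibration $\Diff(B^4) \to \Diff(S^3)$ with fiber $\Diff(B^4, \partial B^4)$: its long exact sequence shows that the kernel of $\pi_0\Diff(B^4,\partial B^4) \to \pi_0\Diff(B^4)$ is the image of the connecting map from $\pi_1\Diff(S^3)$. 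I would argue this connecting map vanishes, because $\pi_1 \Diff(S^3) \cong \pi_1 SO(4)$ is generated by a loop of rotations of $S^3$, and such a loop extends over $B^4$ as the corresponding loop of linear rotations; hence it lifts to $\pi_1\Diff(B^4)$ and dies under the connecting map. Equivalently and more concretely, the boundary trace of the chosen isotopy from $\id_{B^4}$ to $\Psi$ is a loop in $\Diff(S^3)$, and after correcting by a loop of rotations of $B^4$ (which fixes the endpoints $\id_{B^4}$ and $\Psi$) this loop becomes null-homotopic, so its null-homotopy can be pushed into a collar to cancel the residual boundary motion. Either way $\Psi$ is isotopic to $\id_{B^4}$ rel $\partial B^4$, and applying the resulting ambient isotopy to $\Sigma$ carries it to $\Sigma'$ while fixing $K$ throughout. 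Apart from this step, the only remaining care is routine (smoothness of the damped extension $H_s$ and the choice of adapted collar), so I expect the rel-boundary promotion via the rotation loops to be the genuine crux.
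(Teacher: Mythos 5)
Your proof is correct, and its first half coincides with the paper's: the paper proves the lemma via a more general statement (Proposition~\ref{prop:dream}), whose proof likewise uses triviality of $\Sym(K)$ to choose an isotopy of pairs $(S^3,K)$ from the induced boundary diffeomorphism back to the identity, and extends it into a collar in which $\Sigma'$ is a product over $K$, so that the extension preserves $\Sigma'$ --- exactly your $H_s$. Where you genuinely diverge is the rel-boundary upgrade. You reduce to injectivity of $\pi_0\Diff(B^4,\partial B^4) \to \pi_0\Diff(B^4)$, obtained from the restriction fibration together with the claim that $\pi_1\Diff(S^3)$ is generated by a loop of rotations; be aware that this claim is precisely Hatcher's resolution of the Smale conjecture ($\Diff(S^3) \simeq O(4)$), a deep theorem with no elementary proof, which you should cite explicitly. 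The paper avoids this input entirely by a ``wringing out'' trick: after the first step its isotopy $H_t$ has boundary trace $h_t$ that is a loop ($h_0 = h_1 = \id$), and it composes with collar-supported corrections $I_t$ that equal $h_t^{-1}$ near the boundary and damp to the identity deeper inside $B^4$. The composite $I_t \circ H_t$ fixes $\partial B^4$ pointwise for all $t$; the final correction $I_1$ is \emph{not} the identity --- it is exactly the collar twist representing the image of $[h_t] \in \pi_1\Diff(S^3)$ under the connecting map you analyze --- but it is the identity on the collar region containing $\Sigma_1$, which suffices because an ambient isotopy rel boundary carrying $\Sigma_0$ to $\Sigma_1$ need not terminate at the identity diffeomorphism. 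In short: your route proves a stronger, group-level statement (the kernel of $\pi_0\Diff(B^4,\partial B^4)\to\pi_0\Diff(B^4)$ is trivial) at the price of invoking the Smale conjecture, while the paper's route is elementary and self-contained, exploiting the freedom to push the residual twist into a region disjoint from the surfaces rather than to kill it homotopically.
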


This lemma follows from a more general but more technical result (Proposition~\ref{prop:dream}) that we prove in \S\ref{subsec:asym}. With these preliminaries in hand, Theorem~\ref{thm:main} follows quickly.


%
%
%
%
%

\begin{proof}[Proof of Theorem~\ref{thm:main}]
Consider again the knots $J_{m,n}$ with $n \geq 0$ from Example~\ref{ex:higher-genus}. We showed that the knot $J_{m,n}$ bounds a pair of smooth, oriented, properly embedded surfaces $\Sigma_{m,n}$ and $\Sigma'_{m,n}$ of genus $n$ in $B^4$ that induce distinct maps on Khovanov homology, hence are not smoothly isotopic rel boundary.
%



Observe that the surface $\Sigma'_{m,n}$ is obtained from $\Sigma_{m,n}$ by replacing the disk $D \subset \Sigma_{m,n}$ with the disk $D'$. (In particular, the disks $D_m$ and $D'_m$ are obtained by extending $D$ and $D'$ by a fixed concordance from $J$ to $J_m$.) Since $D$ and $D'$ are topologically isotopic rel boundary by Proposition~\ref{prop:top}, we conclude that $\Sigma_{m,n}$ and $\Sigma'_{m,n}$ are topologically isotopic rel boundary.

Finally, we address the stronger conclusion in the theorem. Using SnapPy \cite{snappy} inside Sage \cite{sagemath}, we verify that $\Sym(J_{m,n})$ is trivial if $m \gg0$; see \S\ref{subsec:asym}. By Lemma~\ref{lem:asym}, we  conclude that there is no smooth isotopy of $B^4$ carrying $\Sigma_{m,n}$ to $\Sigma'_{m,n}$ for  $m \gg 0$. (We also verified the claim for $m=0$, and we expect the claim to hold for all $m$.)
\end{proof}


\section{A braid-theoretic approach}
\label{sec:plam}


In this section, we develop braid-theoretic techniques for studying the cobordism maps in Khovanov homology. Our starting point is Plamenevskaya's invariant \cite{plamenevskaya:transverse-Kh}; in \S\ref{subsec:plam}, we review Plamenevskaya's construction and prove Theorem~\ref{thm:plam},  establishing the behavior of this invariant under a flexible class of link cobordisms that generalize complex curves.   In \S\ref{subsec:factorizations}, we review Rudolph's framework of braided surfaces and band factorizations \cite{rudolph:braided-surface}, which guides the construction of more refined classes in Khovanov homology that we can use to distinguish pairs of surfaces.

\subsection{Functoriality of Plamenevskaya's invariant.}\label{subsec:plam}
Given a transverse link $L$ in the standard contact $S^3$, Plamenevskaya defines a class $\psi(L) \in \Kh(L)$ that is invariant (up to sign) under isotopies through transverse links \cite{plamenevskaya:transverse-Kh}. Her construction leverages the correspondence between transverse links up to transverse isotopy and closed braids up to braid isotopy and positive Markov stabilization \cite{bennequin,os:markov,wrinkle}. (For  more background on transverse links, see \cite{etnyre:knot-intro}.)


To define $\psi(L)$, choose an $n$-stranded braid $\beta$ representing $L$. Consider the ``braided'' smoothing of the diagram into $n$ concentric circles by taking the oriented resolution at each crossing (i.e., 0-resolution at each positive crossing and 1-resolution at each negative crossing), and label each circle with an $x$. Plamenevskaya shows this is a cycle in bigrading $(h,q)=(0,w-n)$, where $w$ is the writhe of $\beta$. To prove the resulting class $\psi(L)$ defines a transverse link invariant, she shows $\psi$ is preserved by braid isotopy and positive Markov stabilization. For later use, we  note that $\psi$ is also preserved by  \emph{positive crossing resolutions},  which are simple saddle cobordisms that correspond to deleting a positive crossing $\sigma_i \in \beta$  \cite[Theorem 4]{plamenevskaya:transverse-Kh}.



Next, we recall some definitions and background on ascending surfaces from \cite{bo:qp,hayden:stein}. A smooth, oriented link cobordism $\Sigma \subset S^3 \times [0,1]$ is \emph{ascending} if the projection $\rho: S^3 \times [0,1] \to [0,1]$ restricts to a Morse function on $\Sigma$ and, except at critical points of $\rho|_\Sigma$, the level sets of $\rho|_\Sigma$ are transverse to the standard contact structure on $S^3 \times \{t\}$. At each critical point  $p \in \Sigma$ of $\rho|_\Sigma$, the tangent plane $T_p \Sigma$ coincides with the contact plane $\xi_p$. The critical point is said to be \emph{positive} or \emph{negative} according to whether the orientations on $T_p \Sigma$ and $\xi_p$ agree or disagree, respectively.

To prove that the transverse invariant $\psi$ behaves well with respect to ascending cobordisms with positive critical points, we leverage a relationship between ascending surfaces and braids. The following result can be extracted from the proofs of Lemma 3.5 and Theorem 4.3 in \cite{hayden:stein}; see \cite[\S7.5]{jmz:exotic} for a similar application of \cite{hayden:stein}.

\begin{thm}[\cite{hayden:stein}]\label{thm:braid}
Let $\Sigma \subset S^3 \times [0,1]$ be an ascending cobordism with positive critical points, and suppose that $\Sigma$ has a single critical point at height $t$. After an isotopy (through ascending cobordisms) supported in a small neighborhood $S^3 \times [t-\epsilon,t+\epsilon]$, we may assume that the regular level sets of $\Sigma$ near $S^3 \times \{t\}$ are braided. Morever, the subcobordism $\Sigma \cap S^3 \times [t-\epsilon/2,t+\epsilon/2]$ between the braids $\beta_\pm = \Sigma \cap S^3 \times \{t \pm \epsilon/2\}$ corresponds to either a braided birth or a braided saddle move (with a right-handed half-twist) as depicted in Figure~\ref{fig:braid-moves}.
\end{thm}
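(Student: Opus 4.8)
The plan is to reduce the statement to a purely local analysis at the unique critical point $p$, after first putting the nearby regular level sets into braided position. Since $\Sigma$ is ascending, for $s \neq t$ near $t$ the level set $L_s = \Sigma \cap (S^3 \times \{s\})$ is a transverse link in $(S^3,\xi_\st)$, and by the transverse version of Alexander's theorem each such $L_s$ is transversely isotopic to a closed braid. The first step is to carry out this braiding in a family: I would construct an isotopy of $S^3 \times [t-\epsilon, t+\epsilon]$, supported in this slab and covering the identity on the $[0,1]$-factor, that simultaneously braids the transverse links $L_s$ on each side of the critical level while moving $\Sigma$ through ascending cobordisms. Away from $p$ this is just a parametrized application of the braiding procedure for transverse links; the content is that it can be chosen compatibly on the two sides and extended continuously across the critical slice.

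Next I would establish a normal form at $p$. Because $p$ is a critical point of the ascending cobordism, the tangent plane $T_p\Sigma$ agrees with the contact plane $\xi_p$, and positivity forces the induced orientations to agree; combined with the Morse condition, this pins $\Sigma$ down, near $p$ and up to a contactomorphism respecting the height function, to a standard local piece of a complex curve. There are two cases according to the Morse index. An index-$0$ critical point is a local minimum that produces a small transverse unknot bounding a disk in $\Sigma$; in braided position this is exactly a braided birth, adding a new concentric circle. An index-$1$ critical point is a saddle, which attaches a band between two strands of the braid $\beta_-$; here the complex-curve (equivalently, quasipositive) local model forces the band to carry a \emph{right-handed} half-twist, yielding the braided saddle of Figure~\ref{fig:braid-moves}. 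Maxima do not arise as positive critical points of an ascending cobordism, which is itself part of the normal form, so these two cases are exhaustive.

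The crux of the argument, and the step I expect to be most delicate, is the interaction between the family braiding of the first step and the local model of the second, together with the bookkeeping of the half-twist's handedness. Concretely, I must verify that the braiding isotopy can be taken to respect the normal coordinates at $p$, so that the braids $\beta_\pm = \Sigma \cap (S^3 \times \{t \pm \epsilon/2\})$ differ by precisely the standard braided move and not by some extra conjugation hidden in the isotopy. The sign of the half-twist is where positivity is essential: a negative critical point would produce the mirror, left-handed band, so the orientation hypothesis must be tracked carefully through the normal form. As indicated in the statement, all of these ingredients are already present in the proofs of Lemma~3.5 and Theorem~4.3 of \cite{hayden:stein}, and the proof amounts to assembling the single-critical-point case from those arguments: the transverse Alexander theorem supplies the braiding, and the complex-curve normal form supplies the two local models with the correct handedness.
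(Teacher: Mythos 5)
Your proposal matches how the paper itself treats this statement: the paper gives no independent proof, saying only that the result ``can be extracted from the proofs of Lemma 3.5 and Theorem 4.3 in \cite{hayden:stein},'' and your outline (braiding the transverse level sets in a family via the transverse Alexander/Bennequin theorem, then applying the local models at positive critical points --- birth for index 0, right-handed band for index 1, with positive maxima excluded by the ascending condition) is a faithful sketch of what those cited arguments do. Since both you and the paper defer the decisive technical steps to the same source, your approach is essentially the same as the paper's.
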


\begin{figure}[h]\center
	\def\svgwidth{\linewidth}
\begingroup%
  \makeatletter%
  \providecommand\color[2][]{%
    \errmessage{(Inkscape) Color is used for the text in Inkscape, but the package 'color.sty' is not loaded}%
    \renewcommand\color[2][]{}%
  }%
  \providecommand\transparent[1]{%
    \errmessage{(Inkscape) Transparency is used (non-zero) for the text in Inkscape, but the package 'transparent.sty' is not loaded}%
    \renewcommand\transparent[1]{}%
  }%
  \providecommand\rotatebox[2]{#2}%
  \newcommand*\fsize{\dimexpr\f@size pt\relax}%
  \newcommand*\lineheight[1]{\fontsize{\fsize}{#1\fsize}\selectfont}%
  \ifx\svgwidth\undefined%
    \setlength{\unitlength}{937.11126709bp}%
    \ifx\svgscale\undefined%
      \relax%
    \else%
      \setlength{\unitlength}{\unitlength * \real{\svgscale}}%
    \fi%
  \else%
    \setlength{\unitlength}{\svgwidth}%
  \fi%
  \global\let\svgwidth\undefined%
  \global\let\svgscale\undefined%
  \makeatother%
  \begin{picture}(1,0.19888046)%
    \lineheight{1}%
    \setlength\tabcolsep{0pt}%
    \put(0,0){\includegraphics[width=\unitlength,page=1]{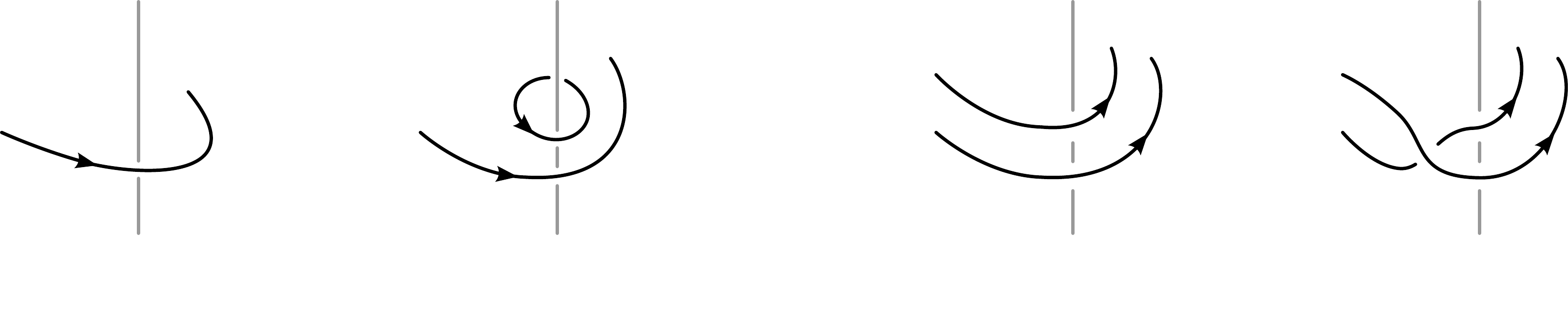}}%
    \put(0.11267902,0.06089194){\color[rgb]{0,0,0}\makebox(0,0)[lt]{\lineheight{1.25}\smash{\begin{tabular}[t]{l}$\beta_-$\end{tabular}}}}%
    \put(0,0){\includegraphics[width=\unitlength,page=2]{images/operations.pdf}}%
    \put(0.19501157,0.00644017){\color[rgb]{0,0,0}\makebox(0,0)[lt]{\lineheight{1.25}\smash{\begin{tabular}[t]{l}(a)\end{tabular}}}}%
    \put(0.79392968,0.00644017){\makebox(0,0)[lt]{\lineheight{1.25}\smash{\begin{tabular}[t]{l}(b)\end{tabular}}}}%
    \put(0.38319119,0.06089194){\color[rgb]{0,0,0}\makebox(0,0)[lt]{\lineheight{1.25}\smash{\begin{tabular}[t]{l}$\beta_+$\end{tabular}}}}%
    \put(0.72093125,0.06089194){\color[rgb]{0,0,0}\makebox(0,0)[lt]{\lineheight{1.25}\smash{\begin{tabular}[t]{l}$\beta_-$\end{tabular}}}}%
    \put(0.98984275,0.06249261){\color[rgb]{0,0,0}\makebox(0,0)[lt]{\lineheight{1.25}\smash{\begin{tabular}[t]{l}$\beta_+$\end{tabular}}}}%
  \end{picture}%
\endgroup%

	\vspace{-10pt}
	\caption{The subcobordisms associated to a braided (a) birth and (b) positive saddle.}\label{fig:braid-moves}
\end{figure}

\begin{proof}[Proof of Theorem~\ref{thm:plam}]
Let $\Sigma \subset S^3 \times [0,1]$ be an ascending cobordism with positive critical points, viewed as a cobordism from $L_1$ to $L_0$. We may perturb $\Sigma$ (using a small isotopy rel boundary through ascending surfaces) to ensure that each critical level set contains a single critical point. Moreover, by a further isotopy rel boundary, we may assume that $\Sigma$ has the braided structure  from Theorem~\ref{thm:braid} near each critical level set. 

By subdividing $\Sigma$ and composing the associated cobordism maps, it suffices to consider three cases. First, between critical level sets, $\Sigma$ is a concordance swept out by a transverse isotopy between transverse links. In this case, $\Kh(\Sigma)$ preserves the transverse invariant (up to sign) by the proof of \cite[Theorem 2]{plamenevskaya:transverse-Kh}.

Next we consider $\Sigma$ near critical level sets, keeping in mind that we are viewing it ``in reverse'' as a cobordism from $L_1$ to $L_0$. The two remaining cases to consider are the cobordisms going from $\beta_+$ to $\beta_-$ in parts (a) and (b) of Figure~\ref{fig:braid-moves}. In part (a),  the cobordism from $\beta_+$ to $\beta_-$ is a Morse death. In the Khovanov chain complexes associated to these braided diagrams, the chains representing  $\psi(\beta_-)$ and $\psi(\beta_+)$ agree except for an $x$-labeled circle in the latter that corresponds to the unknotted component that is killed by the Morse death. The associated cobordism map is determined by applying the map $\varepsilon$ to this distinguished $x$-labeled circle, hence the induced map takes $\psi(\beta_+)$ to $\psi(\beta_-)$. In part (b) of Figure~\ref{fig:braid-moves}, the cobordism from $\beta_+$ to $\beta_-$ is a positive crossing resolution, which is shown to take $\psi(\beta_+)$ to $\psi(\beta_-)$ in \cite[Theorem 4]{plamenevskaya:transverse-Kh}.
\end{proof}

\subsection{Braided surfaces and Khovanov homology.}\label{subsec:factorizations}
Motivated by the above connections between braids and  Khovanov homology, we recall the framework of braided surfaces (\S\ref{subsubsec:braided}) and use this to develop further computational tools (\S\ref{subsubsec:braided-cob}). In what follows, $B_n$ denotes the $n$-stranded braid group, and we will often use $\beta$ to denote both an element of $B_n$ and the link in $S^3$ obtained as its closure.

\subsubsection{Band factorizations and braided surfaces.} \label{subsubsec:braided}
Following Rudolph \cite{rudolph:braided-surface}, a \emph{positive band} (resp.,~\emph{negative band}) in $B_n$ is a word of the form $w \sigma_i w^{-1}$ (resp.,~$w \sigma_i^{-1} w^{-1}$), where $\sigma_i$ is a standard positive Artin generator and $w$ is any word in $B_n$. Any factorization of a braid $\beta \in B_n$ as a product of bands is called a \textit{band factorization} and determines a ribbon-immersed surface in $S^3$ obtained from a collection of $n$ parallel disks by attaching a half-twisted band for each term $w \sigma_i^{\pm1} w^{-1}$; see Figures~\ref{fig:braided-surfaces} and \ref{fig:10-148} for examples. Pushing the interior of the surface into $B^4$ yields a \emph{braided surface} that is smooth and properly embedded in $B^4$ 
with boundary
the closure of the braid $\beta$. 

\begin{figure}\center
\includegraphics[width=\linewidth]{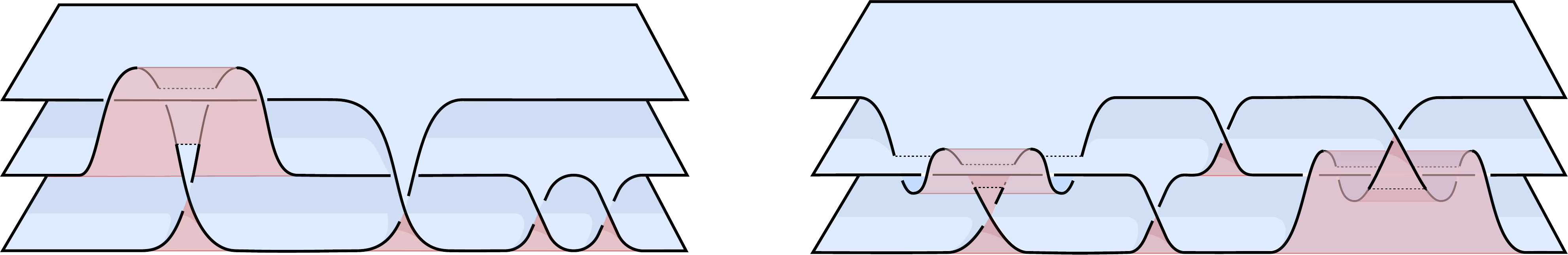}
\caption{Positively braided surfaces corresponding to the quasipositive braid words $(\sigma_1^{-2}\sigma_2 \sigma_1^2)(\sigma_1 \sigma_2 \sigma_1^{-1}) \sigma_2^2$ on the left and $(\sigma_1^{-3} \sigma_2 \sigma_1^3)\sigma_2\sigma_1 (\sigma_2^{-3} \sigma_1 \sigma_2^3)$ on the right.}
\label{fig:10-148}
\end{figure}

In \cite{rudolph:braided-surface}, Rudolph showed that any properly embedded ribbon surface in $B^4$ (i.e., one that has no local maxima) is isotopic to a braided surface.  Many examples of inequivalent surfaces with the same boundary  can be expressed using different band factorizations of the same braid group element. (Indeed, by combining Rudolph's work with Markov's theorem \cite{markov}, one can show that any pair of ribbon surfaces with isotopic boundary can be related this way.)  

\begin{ex} \label{ex:10-148} The positively braided surfaces in Figure~\ref{fig:10-148} are both bounded by the knot $10_{148}$. Below, we relate these braid words directly using braid group relations, including $\sigma_i \sigma_{i+1} \sigma_{i}^{-1}=\sigma_{i+1}^{-1}\sigma_{i} \sigma_{i+1}$. (The underlined terms are marked for later use.)
\begin{align*}
(\sigma_1^{-3} \underline{\sigma_2} \sigma_1^3)\sigma_2\sigma_1 (\sigma_2^{-3} \sigma_1 \sigma_2^3)
 &= 
 (\sigma_1^{-3} \underline{\sigma_2} \sigma_1^3)\sigma_2\sigma_1 \sigma_2^{-2} (\sigma_2^{-1} \sigma_1 \sigma_2)\sigma_2^2
 \\
  &= (\sigma_1^{-3} \underline{\sigma_2} \sigma_1^3)\sigma_2\sigma_1 \sigma_2^{-2} (\sigma_1 \sigma_2 \sigma_1^{-1})\sigma_2^2
   \\
  &= (\sigma_1^{-2} (\sigma_1^{-1} \underline{\sigma_2} \sigma_1) \sigma_1^2)\sigma_2\sigma_1 \sigma_2^{-2} (\sigma_1 \sigma_2 \sigma_1^{-1})\sigma_2^2
     \\
  &= (\sigma_1^{-2} (\sigma_2 \underline{\sigma_1} \sigma_2^{-1}) \sigma_1^2)\sigma_2\sigma_1 \sigma_2^{-2} (\sigma_1 \sigma_2 \sigma_1^{-1})\sigma_2^2
       \\
  &= \sigma_1^{-2} \sigma_2 \underline{\sigma_1} (\sigma_2^{-1}\sigma_1^2 \sigma_2)\sigma_1 \sigma_2^{-2} (\sigma_1 \sigma_2 \sigma_1^{-1})\sigma_2^2
         \\
  &= \sigma_1^{-2} \sigma_2 \underline{\sigma_1} (\sigma_1 \sigma_2^2 \sigma_1^{-1})\sigma_1 \sigma_2^{-2} (\sigma_1 \sigma_2 \sigma_1^{-1})\sigma_2^2
           \\
  &= (\sigma_1^{-2} \sigma_2 \underline{\sigma_1} \sigma_1) \sigma_2^2 \sigma_1^{-1}\sigma_1 \sigma_2^{-2} (\sigma_1 \sigma_2 \sigma_1^{-1})\sigma_2^2
             \\
  &= (\sigma_1^{-2} \sigma_2 \underline{\sigma_1} \sigma_1)(\sigma_1 \sigma_2 \sigma_1^{-1})\sigma_2^2
\end{align*}

\end{ex}

\begin{rem} Work of Rudolph \cite{rudolph:qp-alg} and Boileau-Orevkov \cite{bo:qp} shows that a surface in $B^4$ is isotopic to a \emph{positively} braided surface (i.e., with only positive bands)  if and only if it is isotopic to  the intersection of a smooth complex curve with $B^4 \subset \cc^2$. \end{rem}

\subsubsection{Cobordism maps induced by braided surfaces.}\label{subsubsec:braided-cob}

We highlight an elementary lemma that simplifies calculations of cobordism maps induced by braided surfaces. Its proof is a simple exercise using Table~\ref{table_reidemeister_redux}; see Figure~\ref{fig:core-res} as well.

\begin{lem}\label{lem:res}
The chain map induced by an oriented crossing resolution (of either sign) sends any disoriented smoothing to zero. On oriented smoothings, it acts as the identity if the crossing is positive and as $\pm\, \tfrac{1}{2} \left(\ \zdott \, - \, \zdotb\ \right)$ if the crossing is negative.
\end{lem}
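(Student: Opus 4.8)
The plan is to present the oriented resolution of a single crossing $c$ as an explicit movie and then read the induced chain map off of Tables~\ref{table_morse} and~\ref{table_reidemeister_redux}. Since every map involved is local, it suffices to analyze the tangle containing $c$ together with the two Seifert strands it meets, carrying everything away from $c$ along unchanged. Recall that the oriented (Seifert) smoothing of $c$ is the $0$-smoothing when $c$ is positive and the $1$-smoothing when $c$ is negative, and that in the resolution cube $\CKh(D)$ is the mapping cone of the saddle $\CKh(D_0)\to\CKh(D_1)$. I would realize the resolving cobordism as a single Morse saddle (the band) followed by a Reidemeister~I move whose handedness matches the sign of $c$: the band-saddle replaces $c$ by its oriented smoothing at the cost of a small half-twisted curl, and the Reidemeister~I move absorbs that curl. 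The induced map is then the composite of a saddle map from Table~\ref{table_morse} with the corresponding Reidemeister~I map from Table~\ref{table_reidemeister_redux} (compare Figure~\ref{fig:core-res}).

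For the first assertion I would track a disoriented smoothing -- one resolving $c$ against its orientation -- through this movie. After the band-saddle, such a smoothing presents the curl in exactly the configuration that the relevant Reidemeister~I map annihilates, i.e.\ the ``$\krosb \mapsto 0$'' row of Table~\ref{table_reidemeister_redux}; hence every disoriented smoothing is sent to zero, independently of the sign of $c$. Equivalently, in the cone picture the disoriented resolution is the subcomplex factor when $c$ is positive and the quotient input when $c$ is negative, and in either case it lies in the kernel of the cobordism map.

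For the second assertion I would restrict to oriented smoothings, where the band-saddle acts on the two parallel Seifert strands and the curl sits in the surviving ``$\krosa$'' configuration. If $c$ is positive the cleanup move is a positive Reidemeister~I, whose chain map (the $\kroma$ entry) carries no dot; composing with the saddle and simplifying returns the same labeled smoothing, so the map is the identity -- consistent with its being the quotient projection $\CKh(D)\to\CKh(D_0)$ in the cone. If $c$ is negative the cleanup move is a negative Reidemeister~I, whose chain map carries the factor $\tfrac12$ and a dotted arc (the $\tfrac12\,\kromc$ entry). Composing this with the resolving saddle and using that a dot kills an $x$-label and doubles a $1$-label, the two Seifert sheets meeting at $c$ contribute the two terms $\zdott$ and $\zdotb$ with opposite signs, yielding $\pm\,\tfrac12\left(\ \zdott \, - \, \zdotb\ \right)$; the Remark following Table~\ref{table_reidemeister_redux} guarantees the dotted contribution is even, so the coefficient stays integral.

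The main obstacle is the negative-crossing computation: one must verify that both terms of $\zdott - \zdotb$ appear, that they carry opposite signs, and that the single overall factor of $\tfrac12$ (rather than $\tfrac14$) is correct, all while keeping track of the sign introduced by the differential in the resolution cube. A secondary point requiring care is matching the tangle reached by a disoriented smoothing to the precise zero-row of Table~\ref{table_reidemeister_redux}, which depends on whether the band-saddle merges or splits the Seifert strands; once the local pictures are drawn (as in Figure~\ref{fig:core-res}) this reduces to the promised routine verification.
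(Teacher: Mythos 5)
Your proposal is correct and takes essentially the same route as the paper: the paper's proof is precisely the observation (Figure~\ref{fig:core-res}) that an oriented crossing resolution factors as a Morse saddle followed by a Reidemeister~I move of matching sign, after which the lemma is read off by composing the saddle map of Table~\ref{table_morse} with the Reidemeister~I entries of Table~\ref{table_reidemeister_redux}, exactly as you do. Your mapping-cone interpretation is a correct but inessential addition to that argument.
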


\begin{figure}\center
       \labellist
\pinlabel {\scriptsize$0$} at 365 31
\endlabellist
	\includegraphics[width=\linewidth]{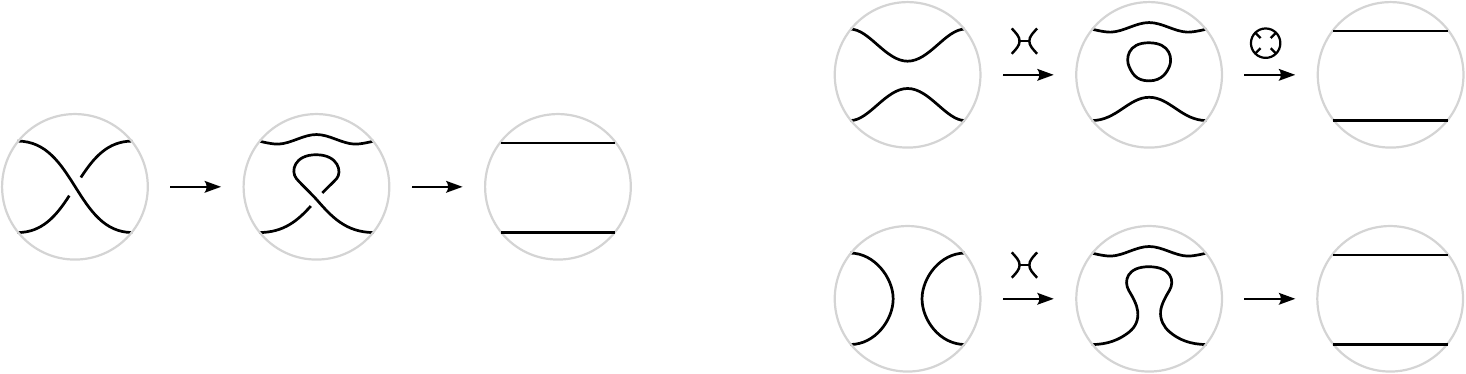}
		\caption{(Left) A positive crossing resolution in a braid as a composition of a saddle move and Reidemeister I move. (Right) The induced map on   smoothings.}
	\label{fig:core-res}
\end{figure}

Before formalizing our approach, we give an example illustrating the core ideas.

\begin{prop}\label{prop:10-148}
Let $\Sigma$ and $\Sigma'$ denote the positively braided, genus-1 surfaces associated to the band factorizations $(\sigma_1^{-2}\sigma_2 \sigma_1^2)(\sigma_1 \sigma_2 \sigma_1^{-1}) \sigma_2^2$  and $(\sigma_1^{-3} \sigma_2 \sigma_1^3)\sigma_2\sigma_1 (\sigma_2^{-3} \sigma_1 \sigma_2^3)$, respectively, for the knot $10_{148}$. There is no smooth isotopy of $B^4$ carrying $\Sigma$ to $\Sigma'$.
\end{prop}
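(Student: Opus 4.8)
The plan is to transplant the strategy of Section~\ref{sec:examples} into the braid setting, letting the band factorizations organize the computation. I would view each of $\Sigma$ and $\Sigma'$ as a link cobordism $10_{148} \to \emptyset$. Via its band factorization, each surface is presented as four saddle moves on the closed $3$-braid --- one per positive band, each realized as an oriented (positive) crossing resolution in the sense of Lemma~\ref{lem:res} --- followed by three Morse deaths capping off the parallel disks. Since $\psi$ is preserved by every ascending cobordism with positive critical points (Theorem~\ref{thm:plam}), Plamenevskaya's invariant itself cannot separate these two complex curves; concretely, the all-$x$ braided smoothing sits in bigrading $(0, w-n) = (0,1)$ for both words and is carried to $\pm 1 \in \Kh(\emptyset) = \zz$ by either surface. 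I would therefore replace $\psi$ by a \emph{modified} cycle $\phi \in \Kh^{0,1}(10_{148})$, obtained from the braided smoothing by relabeling (or re-resolving) along a carefully chosen circle so that $\phi$ becomes disoriented at exactly one band of one surface.

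The computational engine is Lemma~\ref{lem:res}: an oriented crossing resolution annihilates any disoriented smoothing and acts as the identity on oriented smoothings at a positive crossing (see Figure~\ref{fig:core-res}). Tracking $\phi$ band-by-band, the aim is to exhibit the dichotomy of Section~\ref{sec:examples}: for one surface --- say $\Sigma'$ --- an early band resolves a crossing at which $\phi$ is disoriented, sending $\phi \mapsto 0$, whereas for $\Sigma$ every band meets $\phi$ along an oriented smoothing, so $\phi$ survives the four resolutions and is carried by the three deaths to $\pm 1$. The braid-word bridge of Example~\ref{ex:10-148}, with its marked generators, is precisely the bookkeeping device I would use to align the two factorizations up to the point where their band structures diverge and to locate the band that kills $\phi$. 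This yields $\Kh(\Sigma) \neq \pm\,\Kh(\Sigma')$, so by Theorem~\ref{thm:jacobsson} the surfaces are not smoothly isotopic rel boundary.

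To reach the stronger statement --- that no ambient isotopy of $B^4$ carries $\Sigma$ to $\Sigma'$ --- I would argue at the boundary: such an isotopy restricts to a diffeomorphism $g \in \Diff(S^3, 10_{148})$ and hence to a class in $\Sym(10_{148})$, forcing $\Kh(\Sigma') = \pm\,\Kh(\Sigma) \circ g_\ast$. Because $10_{148}$ does not have trivial symmetry group, Lemma~\ref{lem:asym} does not apply verbatim; instead I would compute $\Sym(10_{148})$ and verify directly that no symmetry conjugates one cobordism map into the other, i.e.\ that $\Kh(\Sigma)\circ g_\ast \neq \pm\,\Kh(\Sigma')$ for every $g$. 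Since the symmetry group is small, this amounts to understanding the induced action of each $g$ on the class $\phi$ and checking that it leaves the dichotomy of the previous paragraph intact.

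The step I expect to be the main obstacle is the central computation: simultaneously choosing $\phi$ to be a cycle (Proposition~\ref{prop:cycle}), to lie in the forced quantum grading $-\chi(\Sigma) = 1$, and to be disoriented along one surface's bands but oriented along the other's, and then propagating it correctly through four resolutions in each factorization while controlling signs and the disoriented-to-zero behavior of Lemma~\ref{lem:res}. The manipulations of Example~\ref{ex:10-148} are the systematic tool I would rely on to make this choice deliberately rather than by trial and error.
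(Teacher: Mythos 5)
Your first three-quarters of the plan is essentially the paper's argument: view both surfaces as cobordisms to $\emptyset$, note that Plamenevskaya's class cannot separate them (it lies in bigrading $(0,1)$ and maps to $\pm 1$ under both, by Theorem~\ref{thm:plam}), build a modified cycle that is disoriented at a band, and use the braid-word manipulation of Example~\ref{ex:10-148} to carry one surface's band structure onto the other's braid diagram so that the two induced maps have a common domain; then Lemma~\ref{lem:res} kills the cycle under one map but not the other. That is exactly what the paper does (producing the surface $\Sigma''$ bounded by $\beta$ and the translated ``red'' band). Two caveats: your movie description omits the Reidemeister~II moves that cancel the conjugating words $ww^{-1}$ after the core crossings are resolved --- these are where the disoriented conjugating pairs must be matched for the image to survive (cf.\ Lemma~\ref{lem:incompatible} and Table~\ref{table_reidemeister_redux}) --- and the construction and propagation of the cycle $\phi$, which you correctly identify as the main obstacle and defer, is the actual content of the paper's computation (Figure~\ref{fig:10-148-cycle}, yielding $\CKh(\Sigma)(\phi)=-1$ versus $\CKh(\Sigma'')(\phi)=0$).

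The genuine gap is in your last step. You assert that $10_{148}$ does \emph{not} have trivial symmetry group; the paper verifies by a direct SnapPy computation that $\Sym(10_{148})$ \emph{is} trivial, so Lemma~\ref{lem:asym} applies verbatim and immediately upgrades ``not isotopic rel boundary'' to ``no ambient isotopy of $B^4$.'' Your claim is unsupported and contradicts that computation. Worse, the fallback you propose --- checking that $\Kh(\Sigma)\circ g_* \neq \pm \Kh(\Sigma')$ for each nontrivial symmetry $g$ --- presupposes a well-defined action of $\Sym(K)$ on $\Kh(K)$ compatible with the cobordism maps, which the paper never constructs; the functoriality used here is for oriented cobordisms between fixed diagrams, and for a symmetry reversing the orientation of $S^3$ or of $K$ the map $g_*$ is not even defined in this framework (mirroring dualizes $\Kh$ rather than acting on it). So as written, the final step both starts from a false premise and leans on machinery you would have to build from scratch; the correct (and much shorter) route is to compute the symmetry group, find it trivial, and invoke Lemma~\ref{lem:asym}.
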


\begin{proof}  Let $\beta$ and $\beta'$ denote the braid diagrams for $10_{148}$ corresponding to the braid factorizations underlying $\Sigma$ and $\Sigma'$. The cobordism $\Sigma: \beta \to \emptyset$ is described by the sequence of diagrams on the left side of Figure~\ref{fig:10-148-cycle}. The right side of Figure~\ref{fig:10-148-cycle} begins with labeled smoothing $\phi$ that is easily checked to be a cycle, then tracks it through the cobordism map using Lemma~\ref{lem:res} and Table~\ref{table_reidemeister_redux} to conclude $\CKh(\Sigma)=-1$.

\begin{figure}
\center
\labellist
\pinlabel {\tiny $x$} <0pt,37.75pt>  at 1378.5 2089
\pinlabel {\scriptsize $x$} <0pt,37.75pt>  at 1483 2089
\pinlabel {\tiny $1$} <0pt,37.75pt>  at 1587 2090
\pinlabel {\scriptsize $1$} <0pt,37.75pt>  at 1654.5 2090
\pinlabel {\scriptsize $x$} <0pt,37.75pt>  at 1779 2089
\pinlabel {\scriptsize $x$} <0pt,37.75pt>  at 2079 2082
\pinlabel {\scriptsize $x$} <0pt,37.75pt>  at 2132 2018

\pinlabel {\tiny $x$} <0pt,37.75pt>  at 1378.5 1591
\pinlabel {\scriptsize $x$} <0pt,37.75pt>  at 1483 1591
\pinlabel {\tiny $1$} <0pt,37.75pt>  at 1587 1592
\pinlabel {\scriptsize $1$} <0pt,37.75pt>  at 1654.5 1592
\pinlabel {\scriptsize $x$} <0pt,37.75pt>  at 1779 1591
\pinlabel {\scriptsize $x$} <0pt,37.75pt>  at 2079 1584
\pinlabel {\scriptsize $x$} <0pt,37.75pt>  at 2132 1520

\pinlabel {\scriptsize $x$} <0pt,37.75pt>  at 1483 1094
\pinlabel {\scriptsize $1$} <0pt,37.75pt>  at 1654.5 1095
\pinlabel {\scriptsize $x$} <0pt,37.75pt>  at 1779 1094
\pinlabel {\scriptsize $x$} <0pt,37.75pt>  at 2079 1087
\pinlabel {\scriptsize $x$} <0pt,37.75pt>  at 2132 1023
\pinlabel {\scriptsize \textcolor{gray}{$\times(-1)$}} <0pt,37.75pt>  at 2100 955

\pinlabel {\scriptsize $x$} <0pt,37.75pt>  at 1779 597
\pinlabel {\scriptsize $x$} <0pt,37.75pt>  at 2079 590
\pinlabel {\scriptsize $x$} <0pt,37.75pt>  at 2132 526

\pinlabel {\scriptsize $x$} <0pt,37.75pt>  at 2030 158
\pinlabel {\scriptsize $x$} <0pt,37.75pt>  at 2079 93
\pinlabel {\scriptsize $x$} <0pt,37.75pt>  at 2132 29
\pinlabel {\scriptsize \textcolor{gray}{$\times(-1)$}} <0pt,37.75pt>  at 2100 -40

\pinlabel {$\emptyset$} at 510 10
\pinlabel {-1}  at 1669 10

\endlabellist

\includegraphics[width=\linewidth]{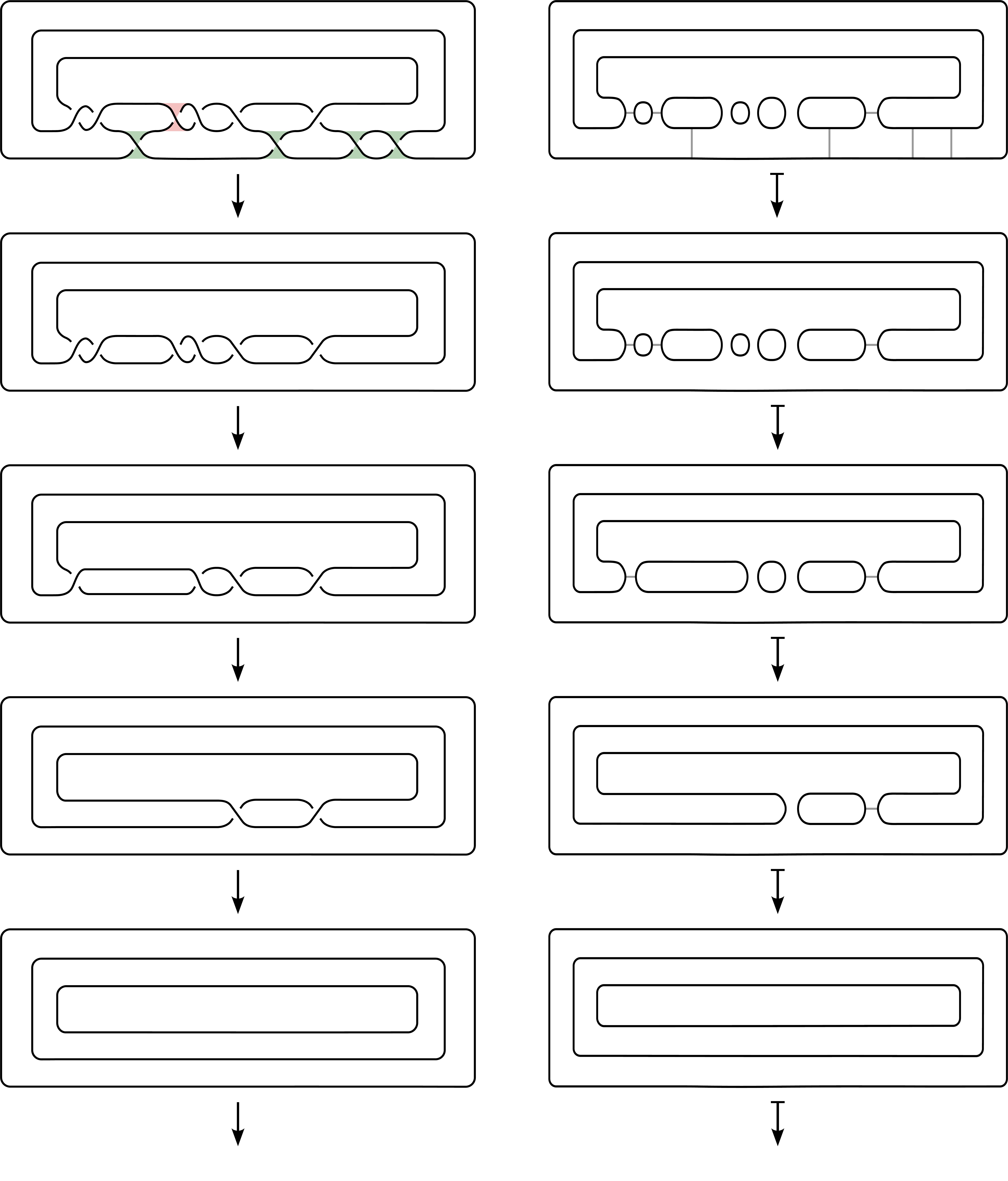}

\vspace{5pt}
\caption{A sequence of diagrams illustrating key steps in the cobordism $\Sigma: \beta \to \emptyset$ and the behavior of a cycle $\phi \in \CKh(\beta)$ under the induced map $\CKh(\Sigma)$. The cobordism begins with four positive crossing resolutions, followed by three Reidemeister II moves, and ends with three Morse deaths.}
\label{fig:10-148-cycle}
\end{figure}

We cannot directly compare $\CKh(\Sigma)$ and $\CKh(\Sigma')$ because they have different domains $\CKh(\beta)$ and $\CKh(\beta')$. However, if we take the isotopy from $\beta'$ to $\beta$ exhibited in Example~\ref{ex:10-148} and smoothly extend it over $B^4$, we carry $\Sigma'$ to an isotopic surface $\Sigma''$ bounded by $\beta$. Moreover, by  taking the saddle moves on $\beta'$ that correspond to $\Sigma'$ and tracking them through the braid isotopy, we can locate saddle moves on $\beta$ that correspond to $\Sigma''$. We do this for a chosen positive crossing using the underlined terms in Example~\ref{ex:10-148}. The translated saddle move is represented by the red band in Figure~\ref{fig:10-148-cycle}. The cobordism $\Sigma'':\beta \to \emptyset$ begins by resolving this crossing, which we note has been given the \emph{disoriented} smoothing in the cycle $\phi$. It follows that this positive crossing resolution cobordism kills $\phi$ by Lemma~\ref{lem:res}, hence $\CKh(\Sigma')(\phi)=0$.

It follows that $\Sigma$ and $\Sigma''$ are not smoothly isotopic rel boundary. Moreover, a direct calculation in SnapPy \cite{snappy} shows that the knot $10_{148}$ has trivial symmetry group, so Lemma~\ref{lem:asym} implies that there is no smooth isotopy of $B^4$ carrying $\Sigma''$ to $\Sigma$. It follows that the same is true of $\Sigma'$ and $\Sigma$, since $\Sigma'$ is smoothly isotopic to $\Sigma''$.
\end{proof}



\clearpage

Let us formalize some of the ideas seen in the above proof. We say that a smoothing of a braid $\beta$ is \emph{compatible} with a given band factorization of $\beta$ if, for each band $w \sigma_i^{\pm1} w^{-1}$,
\begin{enumerate}[label=\emph{(\roman*)}]
\item the core crossing $\sigma_i^{\pm1}$ is given the oriented smoothing (i.e., 0-smoothing for $\sigma_i$ and 1-smoothing for $\sigma_i^{-1}$), and
\item for each crossing in $w$ and  corresponding inverse crossing in $w^{-1}$, either both have oriented smoothings or both have disoriented smoothings.\end{enumerate}
We also say that a labeling $\alpha \in \CKh(\beta)$ of such a smoothing is  \emph{compatible} with the given band factorization of $\beta$. The cycle underlying Plamenevskaya's invariant is a prototypical example, and it is compatible with every band factorization because each crossing is given the oriented smoothing. For an example of a compatible cycle that contrasts with Plamenevskaya's cycle, the cycle in Figure~\ref{fig:10-148-cycle} from the proof of Proposition~\ref{prop:10-148} has disoriented smoothings on all conjugating crossings in the band factorization. 

\begin{lem} \label{lem:incompatible}
If $\Sigma$ is a braided surface given by a band factorization of $\beta$ and $\alpha \in \CKh(\beta)$ is an incompatible labeled smoothing, then $\CKh(\Sigma)(\alpha)=0$.

\end{lem}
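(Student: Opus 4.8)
The plan is to track the single generator $\alpha$ through the explicit movie for the cobordism $\Sigma : \beta \to \emptyset$ determined by the band factorization, and to locate one stage at which its image is forced to vanish. As in the proof of Proposition~\ref{prop:10-148}, this movie consists of three phases: an oriented crossing resolution at the core crossing $\sigma_{i_j}^{\pm 1}$ of each band $w_j\sigma_{i_j}^{\pm1}w_j^{-1}$; a sequence of complexity-reducing Reidemeister~II moves that cancel each conjugating crossing of $w_j$ against its partner in $w_j^{-1}$ (for a longer word $w_j$, corresponding crossings cancel in nested pairs, innermost first); and finally Morse deaths capping off the resulting $n$ concentric circles. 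Since $\CKh(\Sigma)$ is the composition of the associated chain maps, it suffices to produce a single map in the first or second phase that kills $\alpha$, according to whether the incompatibility of $\alpha$ violates condition \emph{(i)} or \emph{(ii)}. The observation I would use throughout is that each of these local moves leaves the smoothing unchanged at every crossing other than the one (or two) it acts on, and the intervening planar isotopies preserve smoothings and labels; hence the relevant local data of $\alpha$ survives untouched until the killing step.

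Suppose first that $\alpha$ violates \emph{(i)}, so some core crossing $c$ carries a disoriented smoothing. At the stage where the movie performs the oriented crossing resolution at $c$, the tracked chain is a $\zz$-combination of generators each still carrying a disoriented smoothing at $c$, since nothing earlier touched it. By Lemma~\ref{lem:res}, an oriented crossing resolution annihilates any smoothing that is disoriented at the resolved crossing, so this single map sends the whole chain to $0$, whence $\CKh(\Sigma)(\alpha)=0$.

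Now suppose $\alpha$ satisfies \emph{(i)} but violates \emph{(ii)}, so for some band there is a crossing $d$ of $w_j$ and its partner $d^{-1}$ of $w_j^{-1}$ whose smoothings are mismatched (one oriented, one disoriented). These form a crossing--inverse pair cancelled by a single Reidemeister~II move, and again no earlier move alters the smoothing at $d$ or $d^{-1}$. The crux is the following local computation: by Table~\ref{table_reidemeister_redux} the Reidemeister~II chain map sends exactly the two smoothings $\krtsc$ and $\krtsd$ to $0$, and --- using that the oriented smoothings of a crossing and of its inverse are \emph{opposite} $0/1$-resolutions --- these two smoothings are precisely the ones in which $d$ and $d^{-1}$ receive the \emph{same} $0/1$-resolution, i.e.\ the mismatched configurations. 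Thus when this Reidemeister~II map is applied, every term of the tracked chain (all mismatched at $(d,d^{-1})$) dies, and $\CKh(\Sigma)(\alpha)=0$.

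The main obstacle is precisely this last local matching: one must verify that the two Reidemeister~II smoothings sent to zero in Table~\ref{table_reidemeister_redux} are the mismatched rather than the matched configurations of a crossing--inverse pair. This is a direct but orientation-sensitive check, and it is exactly what motivates phrasing condition \emph{(ii)} in terms of the oriented/disoriented \emph{status} of corresponding crossings rather than their raw resolutions. As a consistency check, Plamenevskaya's cycle has every crossing oriented, so it is matched on every conjugating pair and survives all Reidemeister~II moves, in agreement with Theorem~\ref{thm:plam}.
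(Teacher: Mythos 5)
Your proposal is correct and follows essentially the same route as the paper's proof: the same three-phase movie (core-crossing resolutions, Reidemeister~II cancellations of $w$ against $w^{-1}$, Morse deaths), with a violation of \emph{(i)} killed by Lemma~\ref{lem:res} at the corresponding resolution and a violation of \emph{(ii)} killed at the Reidemeister~II move canceling the mismatched pair, via the two zero rows of Table~\ref{table_reidemeister_redux}. Your explicit verification that the mismatched configurations are exactly the smoothings $\krtsc$ and $\krtsd$ (equivalently, the ones where the crossing and its inverse receive the same raw $0/1$-resolution) is the orientation bookkeeping the paper leaves implicit, and your tracking of why earlier moves cannot disturb the relevant local smoothing matches the paper's parenthetical remark.
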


\begin{proof} The cobordism $\Sigma: \beta\to \emptyset$ naturally begins with a sequence of crossing resolutions,  each resolving the core crossing of a band $w \sigma_i^{\pm1} w^{-1}$. After passing these saddles, we have a diagram for an unlink given by a product of braid words of the form $ww^{-1}$. This is simplified to the trivial braid by a sequence of Reidemeister II moves, each canceling the final crossing in a word $w$ with the first crossing in its inverse  $w^{-1}$. The cobordism ends with Morse deaths deleting the components of the trivial braid.

Suppose $\alpha$ is incompatible with the braid factorization. If one of the core crossings in a band is given the disoriented smoothing, then  Lemma~\ref{lem:res} says that $\alpha$ is killed by one of the initial crossing resolutions, so $\CKh(\Sigma)(\alpha)=0$.

Next consider the case where there is a pair of corresponding inverse crossings in some $w$ and $w^{-1}$ such that one crossing is given the oriented smoothing and the other is given the disoriented smoothing. We  proceed in the cobordism until these crossings are adjacent and are ready to be canceled with a Reidemeister II move. (Note that, after each previous Reidemeister II move, the diagram and underlying smoothing are unchanged away from the two 
 crossings being canceled --- however, the labelings and connectivity of the loops may change). At the stage where we cancel the two crossings in question,  the cobordism has the local form shown in the bottom-left corner of  Table~\ref{table_reidemeister_redux} (or its mirror). The smoothings and chain map are locally given by the bottom two rows on the right side of the table, and these are both zero maps, so $\CKh(\Sigma)(\alpha)=0$.
\end{proof}

The preceding observations help us identify chain elements in the kernel of a braided surface's cobordism map. But a similar perspective can also help identify elements in the support of the map, especially when considering \textsl{positively} braided surfaces. This provides the following heuristic, which we demonstrate in Proposition~\ref{prop:braided-disks} below. 

\smallskip

\begin{heuristic}\label{heuristic}
Given a braided surface $\Sigma$  associated to a band factorization $\beta = \prod  w_k \sigma_{i_k}^{\pm 1} w_k^{-1}$, suppose $\Sigma'$ is another surface with $\partial \Sigma'=\beta$ whose movie begins by resolving a crossing $c$ in a subword $w_k$ or $w_k^{-1}$.
\begin{enumerate}[label=\arabic*.]
\item Identify a chain $\phi \in \CKh(\beta)$ that has a disoriented resolution at $c$ and satisfies $\CKh(\Sigma)(\phi)\neq 0$.

\item Search for a chain  $\alpha \in \ker \CKh(\Sigma) \cap \ker \CKh(\Sigma')$ with $\partial \alpha = \partial \phi$.
\end{enumerate}
\smallskip

The difference $\phi-\alpha$ represents a cycle $\delta \in \Kh(\beta)$ such that $\Kh(\Sigma)(\delta)\neq \Kh(\Sigma')(\delta)$.
\end{heuristic}

The first step of this strategy is often straightforward. In the second step, it is also easy to identify many elements of $\ker \CKh(\Sigma) \cap \ker \CKh(\Sigma')$; in light of Lemma~\ref{lem:incompatible}, it is natural to consider  the subcomplex of $\CKh(\beta)$ generated by incompatible smoothings. Also note that $\ker \CKh(\Sigma')$ contains the subcomplex of $\CKh(\beta)$ where the crossing $c$ is assigned a disoriented smoothing. 

 To see this in practice, we will give an alternative argument that distinguishes the disks from  Figure~\ref{fig:main-disks} via their braided representatives in Figure~\ref{fig:braided-surfaces}. For convenience, we now let $D$ and $D'$ denote these latter braided representatives. 


\begin{figure}\center
       \labellist
\scriptsize\hair 2pt
\pinlabel {$x$} at 1819 2035
\pinlabel {$x$} at 1671 1888
\pinlabel {$x$} at 1596.5 1813
\pinlabel {$x$} at 1675 1536
\pinlabel {$x$} at 1666 1120

\pinlabel {$x$} at 2910 2035
\pinlabel {$x$} at 2687.5 1813
\pinlabel {$x$} at 2769 1536
\pinlabel {$x$} at 2760 1120
\pinlabel {$x$} at 2916 820

\pinlabel {\normalsize $\beta$} at 360  -100
\pinlabel {\normalsize $\phi$} at 1500  -100
\pinlabel {\normalsize $\alpha$} at 2600  -100

\endlabellist
	\includegraphics[width=.9\linewidth]{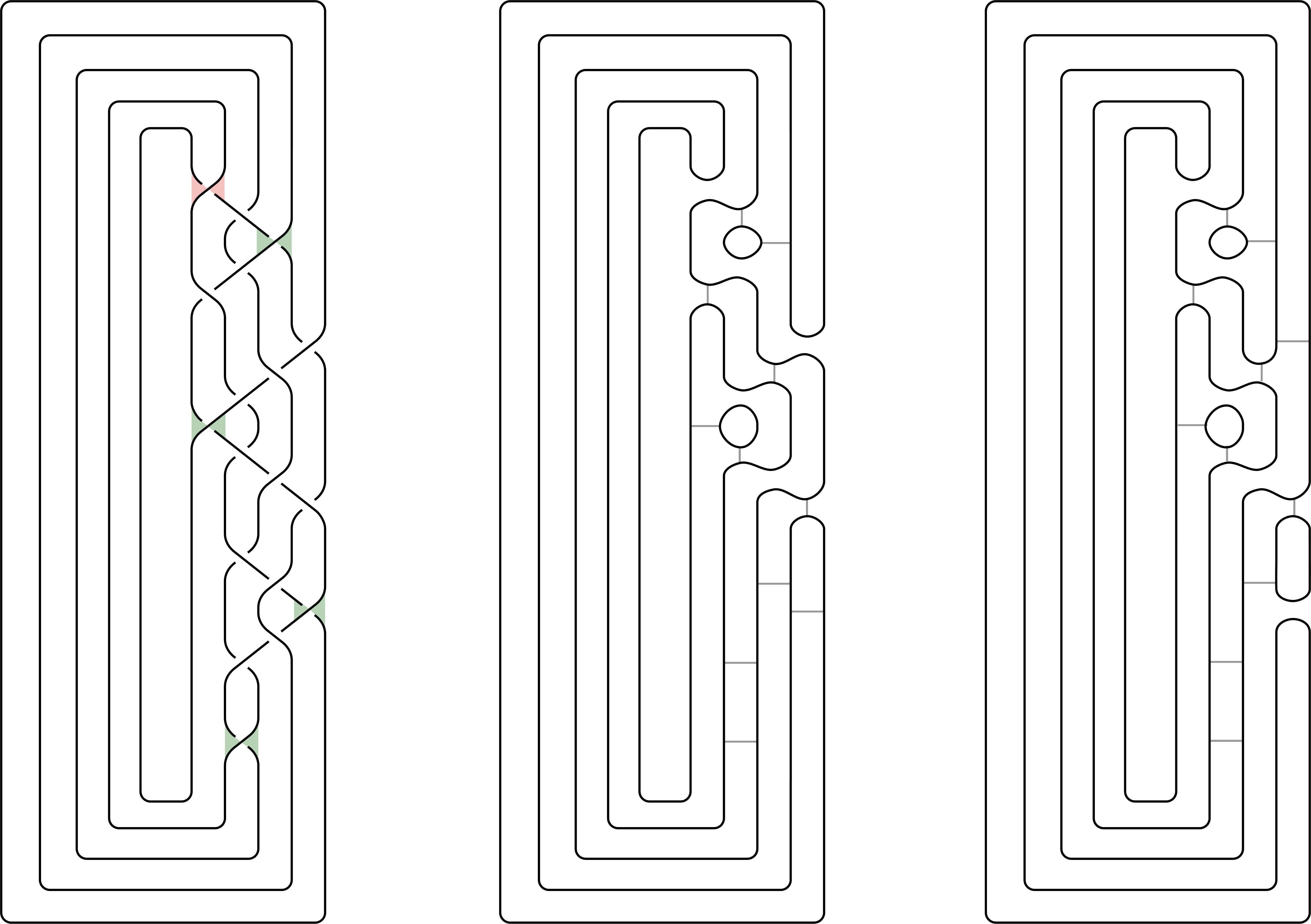}

	\vspace{.25in}
	
	\caption{A closed braid $\beta$ representing $J$, together with chain elements $\phi$ and $\alpha$ satisfying Heuristic \ref{heuristic}.}
	\label{fig:beta-phi}
	\end{figure}

\begin{prop}\label{prop:braided-disks}
There is a braid isotopy between the boundaries of the braided disks $D$ and $D'$ in Figure~\ref{fig:braided-surfaces} that does not extend to any smooth isotopy of $B^4$.
\end{prop}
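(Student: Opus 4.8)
The plan is to follow the template of Proposition~\ref{prop:10-148} and Heuristic~\ref{heuristic}, now applied to the braided disks $D$ and $D'$ bounded by $J$ in Figure~\ref{fig:braided-surfaces}. First I would produce the braid isotopy between the boundaries: writing out the band factorizations underlying the two surfaces and manipulating them with the braid relations, exactly as in Example~\ref{ex:10-148}, I would exhibit a braid isotopy carrying $\partial D'$ to $\partial D$ so that both surfaces become band factorizations of the common closed braid $\beta$ of Figure~\ref{fig:beta-phi}. Extending this braid isotopy $h_t$ of $S^3$ over $B^4$ (via the isotopy extension theorem) yields an ambient isotopy $\Phi_t$ of $B^4$ with $\Phi_1(D') = D''$, where $D''$ is a braided surface with $\partial D'' = \partial D$ that is smoothly, but not rel boundary, isotopic to $D'$. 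Passing to $D''$ is what makes the cobordism maps of the two surfaces directly comparable, since $\CKh(D)$ and $\CKh(D'')$ then share the domain $\CKh(\beta)$.

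The core of the argument is a Khovanov computation distinguishing the cobordism maps of $D$ and $D''$, both viewed as cobordisms $\beta \to \emptyset$, using the chains $\phi$ and $\alpha$ recorded in Figure~\ref{fig:beta-phi}. As in the proof of Proposition~\ref{prop:10-148}, one tracks $\phi$ through the movie for $D$ (a sequence of positive crossing resolutions, Reidemeister II moves, and Morse deaths) using Lemma~\ref{lem:res} and Table~\ref{table_reidemeister_redux} to verify $\CKh(D)(\phi) \neq 0$. The chain $\alpha$ is chosen with $\partial \alpha = \partial \phi$, so that $\delta := \phi - \alpha$ is a cycle, and so that $\alpha \in \ker\CKh(D) \cap \ker\CKh(D'')$; the latter follows from Lemma~\ref{lem:incompatible}, as $\alpha$ is supported on smoothings incompatible with both band factorizations. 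After transporting the saddle moves for $D''$ through the braid isotopy (again as in Proposition~\ref{prop:10-148}), one identifies the crossing $c$ at which the movie for $D''$ begins; since $\phi$ carries a disoriented smoothing at $c$, Lemma~\ref{lem:res} gives $\CKh(D'')(\phi) = 0$ and hence $\CKh(D'')(\delta) = 0$, whereas $\CKh(D)(\delta) = \CKh(D)(\phi) \neq 0$. Thus $\Kh(D)(\delta) \neq \pm\,\Kh(D'')(\delta)$, so by Theorem~\ref{thm:jacobsson} the disks $D$ and $D''$ are not smoothly isotopic rel boundary.

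It remains to convert this into the statement about the braid isotopy. Let $h_t$ be the braid isotopy of $S^3$ carrying $\partial D$ to $\partial D'$, and let $\Phi_t$ be an extension over $B^4$ of the inverse isotopy $t \mapsto h_t^{-1}$, so that $\Phi_t|_{\partial B^4} = h_t^{-1}$ and $D'' := \Phi_1(D')$ is bounded by $\partial D$. If $h_t$ extended to a smooth ambient isotopy $G_t$ of $B^4$ with $G_t|_{\partial B^4} = h_t$ and $G_1(D) = D'$, then $\Phi_t \circ G_t$ would restrict to the identity on $\partial B^4$ for all $t$ and satisfy $\Phi_1 \circ G_1(D) = \Phi_1(D') = D''$, giving a smooth isotopy rel boundary carrying $D$ to $D''$ and contradicting the previous paragraph. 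Hence no such extension exists, which is the assertion of the proposition. A pleasant feature of this route is that it sidesteps any appeal to $\Sym(J)$ and Lemma~\ref{lem:asym}, which is essential here since $J$ carries a nontrivial symmetry relating $D$ and $D'$. I expect the main obstacle to be the bookkeeping in the second paragraph: verifying directly that the explicit chains in Figure~\ref{fig:beta-phi} have the required boundaries and kernel memberships, and correctly locating the initial crossing $c$ for $D''$ after transporting its bands through the braid isotopy.
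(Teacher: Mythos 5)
Your proposal is correct and takes essentially the same approach as the paper: the paper likewise passes to the disk $D''$ obtained as the image of $D'$ under a smooth extension of the braid isotopy, applies Heuristic~\ref{heuristic} with exactly the chains $\phi$ and $\alpha$ of Figure~\ref{fig:beta-phi} to show $D$ and $D''$ are not smoothly isotopic rel boundary, and concludes that the braid isotopy cannot extend. The only differences are cosmetic: the paper cites \cite[\S A.1]{hayden:curves} for the braid isotopy rather than re-deriving it via braid relations as in Example~\ref{ex:10-148}, and it leaves your careful final composition argument (showing that an extension carrying $D$ to $D'$ would yield a rel-boundary isotopy from $D$ to $D''$) implicit.
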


\begin{proof}
 As shown in \cite[\S A.1]{hayden:curves}, there is a braid isotopy from $\partial D'$ to $\partial D$ that takes a band from $D'$ to a band corresponding to the final $\sigma_1$-crossing (highlighted in red) in Figure~\ref{fig:beta-phi}. For convenience, let $D''$ denote the image of $D'$ under any smooth extension of this isotopy. 
The chain element $\phi$ from Figure~\ref{fig:beta-phi} satisfies (1) in Heuristic~\ref{heuristic}. The boundary $\partial \phi$  consists of a single term (corresponding to changing the 0-resolution inside the outermost circle to a 1-resolution). Changing one of the other 1-resolutions in $\partial \phi$ to a 0-resolution yields the chain element $\alpha$ shown on the righthand side of Figure~\ref{fig:beta-phi}, which satisfies (2) in Heuristic~\ref{heuristic}. The difference of these chains represents a homology class that distinguishes the maps $\Kh(D)$ and $\Kh(D'')$, hence the disks $D$ and $D''$ are not smoothly isotopic rel boundary. It follows that the isotopy from $\partial D'$ to $\partial D$ cannot extend to a smooth isotopy of $B^4$.
\end{proof}

\titleformat{\section}{\large\bfseries}{}{0pt}{\center Appendix \thesection:  }
\titlespacing{\section}{0pt}{*4}{*1.5}
\setcounter{section}{0}
\renewcommand{\thesection}{\Alph{section}}\setcounter{subsection}{1}
\titleformat{\subsection}[runin]{\bfseries}{}{0pt}{\thesection.\arabic{subsection} \ \ }


\section{Isotopies and symmetry groups}

\subsection{Upgrading to isotopy rel boundary.} Under certain conditions, an  isotopy between surfaces with the same boundary can be upgraded to an isotopy rel boundary.

\begin{prop}\label{prop:dream}
Let $\Sigma_0$ and $\Sigma_1$ be properly embedded surfaces in $B^4$ bounded by the same  knot $K$ in $S^3$. Suppose there is an ambient isotopy of $B^4$ carrying $\Sigma_0$ to $\Sigma_1$, and let $f_1$ denote the induced diffeomorphism of the pair $(S^3,K)$ at time $t=1$. If $f_1$ is isotopic to the identity through diffeomorphisms of the pair $(S^3,K)$, then $\Sigma_0$ and $\Sigma_1$ are ambiently isotopic rel boundary.
\end{prop}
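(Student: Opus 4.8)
The plan is to start from the given ambient isotopy $F\colon B^4\times[0,1]\to B^4$ (with $F_0=\id$ and $F_1(\Sigma_0)=\Sigma_1$) and surgically remove its boundary motion. Writing $f_t=F_t|_{S^3}$, we have $f_0=\id$, and since $\partial\Sigma_0=\partial\Sigma_1=K$ the time-one map $f_1$ preserves $K$, so $f_1\in\Diff(S^3,K)$; let $g_s$ denote the hypothesized path in $\Diff(S^3,K)$ from $f_1$ to $\id$. First I would push the boundary isotopy $f_t$ inward using a collar $c\colon S^3\times[0,1]\hookrightarrow B^4$ of $\partial B^4$ and a cutoff function, producing an ambient isotopy $\Phi_t$ supported in the collar with $\Phi_0=\id$ and $\Phi_t|_{S^3}=f_t$. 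Then $H_t:=\Phi_t^{-1}\circ F_t$ is an ambient isotopy that is the \emph{identity} on $\partial B^4$ for all $t$ and satisfies $H_1(\Sigma_0)=\Phi_1^{-1}(\Sigma_1)=:\Sigma_1'$. Thus $\Sigma_0$ is already ambiently isotopic rel boundary to $\Sigma_1'$, a surface that coincides with $\Sigma_1$ away from the collar; the entire problem is reduced to showing $\Sigma_1'\simeq\Sigma_1$ rel boundary.

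To carry out this reduction I would arrange the collar so that $\Sigma_1$ is a product $c(K\times[0,1])$ there; then $\Sigma_1'$ is the trace in the collar of the loop of knots $s\mapsto f_{\lambda(s)}^{-1}(K)$, which is a genuine loop based at $K$ precisely because $f_1(K)=K$. This is where the hypothesis enters: using the path $g_s$, each term of which preserves $K$, I would build a \emph{second} collar filling $\Psi$ of $f_1$ that preserves the product annulus and hence fixes $\Sigma_1$ setwise. Comparing the two fillings yields $\Sigma_1'=E(\Sigma_1)$, where $E=\Phi_1^{-1}\circ\Psi$ is supported in the collar and restricts to the identity on $\partial B^4$; equivalently, the discrepancy between the two fillings traces out a loop $\gamma$ in $\Diff(S^3)$ built from $f_t$ and $g_s$. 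So everything comes down to whether this collar-supported, boundary-fixing $E$ moves $\Sigma_1$ within its rel-boundary isotopy class.

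The hard part will be exactly this last point, which is homotopy-theoretic rather than formal: diffeomorphisms of $S^3\times[0,1]$ fixing both ends are classified up to isotopy rel boundary by $\pi_1(\Diff(S^3))\cong\zz/2$ (via Hatcher's theorem $\Diff(S^3)\simeq O(4)$), and $E$ represents the class of $\gamma$. The cleanest way I would control this is through the boundary fibration $\partial\colon\mathcal{S}'\to\mathcal{K}$, where $\mathcal{S}'$ is the space of properly embedded surfaces isotopic to $\Sigma_0$ with boundary free to move in the space $\mathcal{K}$ of knots isotopic to $K$; by the isotopy extension theorem this is a fibration whose fiber records rel-boundary isotopy classes. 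The original isotopy projects to the loop $\ell=f_t(K)$ in $\mathcal{K}$, and in the long exact sequence of the orbit fibration $\Diff(S^3,K)\to\Diff(S^3)\to\mathcal{K}$ the connecting map sends $[\ell]$ to the class of $f_1$ in $\pi_0\Diff(S^3,K)$, which is trivial by hypothesis. Hence $[\ell]$ lies in the image of $\pi_1\Diff(S^3)$, and since every loop of diffeomorphisms of $S^3$ is homotopic to a loop of linear rotations in $SO(4)$ which extends over $B^4$ as a loop in $\Diff(B^4)$, the monodromy of $[\ell]$ on the fiber is trivial. This is what forces $\Sigma_0$ and $\Sigma_1$ into the same rel-boundary isotopy class. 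I expect the genuinely delicate step to be verifying that the leftover $\zz/2$ collar twist acts trivially on the surface (i.e.\ the rotation-loop argument above), whereas the collar extension and the use of $g_s$ to pin the boundary are routine applications of isotopy extension.
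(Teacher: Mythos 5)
Your proposal is correct in outline, but it takes a genuinely different --- and much heavier --- route than the paper. Your first reduction (cancelling the boundary motion by a collar-supported isotopy, so that everything comes down to comparing $\Sigma_1$ with its image $\Sigma_1'=E(\Sigma_1)$ under a collar twist $E$ by a loop $\gamma$ in $\Diff(S^3)$ built from $f_t$ and $g_s$) is close in spirit to the paper's manipulations, and your use of the path $g_s$ to build a second, $\Sigma_1$-preserving filling is exactly how the hypothesis enters in the paper as well. The divergence is what happens to the residual twist. The paper never needs to know anything about the homotopy class of $\gamma$: after appending the $g$-isotopy (damped down a collar, which preserves $\Sigma_1$ because each $g_s$ preserves $K$ and $\Sigma_1$ is a collar product over $K$) so that the time-one boundary map is the identity, it ``wrings out'' the resulting boundary \emph{loop} $h_t$ down an infinite collar $S^3\times(-\infty,1]$ obtained by deleting a fixed point, via $I_t(x,s)=\left(h^{-1}_{t\mu(s)}(x),s\right)$ with $\mu\equiv1$ on the slab $S^3\times[-1,1]$ containing $\Sigma_1$ and $\mu\equiv0$ below $S^3\times\{-2\}$. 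At $t=1$ this map is the identity on the slab containing the surface, and whatever nontrivial twisting remains --- precisely the $\pi_1(\Diff(S^3))$ phenomenon you worry about --- is confined to $S^3\times[-2,-1]$, disjoint from $\Sigma_1$, hence harmless: the composite isotopy fixes $\partial B^4$ and carries $\Sigma_0$ to $\Sigma_1$. The point is that one only needs the isotopy class of the \emph{surface}, not of the ambient diffeomorphism, so a nontrivial twist can simply be parked where the surface isn't. You instead prove the stronger statement that the twist is trivial, invoking Hatcher's resolution of the Smale conjecture ($\Diff(S^3)\simeq O(4)$): exactness in the orbit fibration turns the hypothesis on $f_1$ into $[\ell]\in\operatorname{im}\bigl(\pi_1\Diff(S^3)\bigr)$, Hatcher represents this class by a loop of rotations, and rotations extend to a loop in $\Diff(B^4)$, so the monodromy (equivalently, the collar twist rel $\partial B^4$) is trivial. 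That argument is sound, and it makes the role of the hypothesis transparent; but it imports a very deep theorem special to dimension three plus fibration machinery for embedding spaces, whereas the paper's argument is elementary, self-contained, and works verbatim in any dimension.

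Two points you gloss deserve flags. First, the assertion that the fiber of $\mathcal{S}'\to\mathcal{K}$ ``records rel-boundary isotopy classes'' is not automatic: a path in the fiber is an isotopy through surfaces whose boundary is $K$ \emph{setwise}, and upgrading this to an ambient isotopy fixing $\partial B^4$ \emph{pointwise} requires the collar-wringing trick yet again (it works because the induced boundary motion stays in $\Diff(S^3,K)$, so undoing it preserves the product annulus) --- so your fibration route does not actually avoid the paper's collar argument, it postpones it. Second, the fibration properties themselves (the orbit map and the restriction-to-boundary map on spaces of embedded surfaces) need justification of Palais--Cerf type. Neither issue is fatal; indeed, if you finish your hands-on reduction from the first two paragraphs by showing directly that $E$ is isotopic to $\id_{B^4}$ rel $\partial B^4$ --- via the restriction fibration $\Diff(B^4,\partial B^4)\to\Diff(B^4)\to\Diff(S^3)$ and the rotation lift --- you can discard the surface-space fibration entirely and the only remaining heavy input is Hatcher's theorem.
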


\begin{proof}
To begin, let $F_t$ with $t \in [0,1]$ denote the ambient isotopy of $B^4$ carrying $\Sigma_0$ to $\Sigma_1$, and let $f_t$ denote the induced isotopy of the boundary $S^3$.  We take a moment to make some simplifying assumptions.  First, we may assume that the ambient isotopy fixes a point that is disjoint from all the intermediate surfaces $\Sigma_t=F_t(\Sigma_0)$. (This is a straightforward application of the isotopy extension theorem\footnote{For example, see \cite[\S8]{hirsch}. While Hirsch's statements are phrased in terms of submanifolds lying entirely in either the ambient manifold's boundary or interior, the arguments carry over directly to our setting of properly embedded surfaces in $B^4$.}, which allows us to produce a modified isotopy  that agrees with $F_t$ on a neighborhood of $\partial B^4$ and $\cup_t \Sigma_t$ but is the identity outside a larger neighborhood of this subset.) It will be notationally convenient to remove this fixed point from $B^4$ and view its complement as $S^3 \times (-\infty,1]$. Since the surfaces $\Sigma_t$ are compact, they lie in a sufficiently large compact collar neighborhood of the boundary. For convenience, we will assume that they lie in $S^3 \times [-1,1]$ and that the ambient isotopy is supported inside $S^3 \times [-2,1]$. Finally, it will also be technically convenient to assume that $\Sigma_t$ intersects the collar neighborhood $S^3 \times [0,1]$ along the cylinder $f_t(K) \times [0,1]$.

{We begin by modifying the isotopy so that its time-1 map restricts to the identity on $\partial B^4 = S^3 \times \{1\}$.} By hypothesis, the diffeomorphism $f_1: S^3 \to S^3$ at time $t=1$ is isotopic to the identity through diffeomorphisms of the pair $(S^3,K)$. Let $g_t : (S^3,K) \to (S^3,K)$ be such an isotopy from $g_0=f_1$ back to $g_1 = \id$.  It is straightforward to extend $g_t$ to an isotopy $G_t :B^4 \to B^4$ such that $G_t(\Sigma_1)=\Sigma_1$ for all $t \in [0,1]$: Fix a small value $\epsilon>0$ and a smooth, monotone function $\rho: [0,1] \to [0,1]$ that equals $0$ on $[0,\epsilon]$ and equals $1$ on $[1-\epsilon,1]$. We can define $G_t$ by demanding that
\begin{enumerate}[label=(\roman*)]
\item $G_t$ agrees with $F_1$ outside $S^3 \times (0,1]$ for all $t \in [0,1]$ and
\item at each point $(x,s) \in S^3 \times [0,1]$, we have
$G_t(x,s) = \left(g_{t\rho(s)}(x),s\right).$
\end{enumerate}
Concatenating the isotopies $F_t$ and $G_t$ yields an isotopy $H_t$ of $B^4$ that still carries $\Sigma_0$ to $\Sigma_1$ and whose time-1 map $H_1$ restricts the identity on $\partial B^4$.

We will now modify the entire isotopy by ``wringing out'' the boundary isotopy $h_t$ of $S^3 \times \{1\}$, letting it run down over $\Sigma_t$. {Choose a smooth, monotone function $\mu: (-\infty,1]\to [0,1]$ that equals $0$ on $(-\infty,-2]$ and equals $1$ on $[-1,1]$, and set
$$I_t(x,s) = \left(h^{-1}_{t\mu(s)}(x),s\right).$$
Since $h_0^{-1}=\id$, the homotopy $I_t$ is supported on $S^3\times (-2,1] \subset B^4$ and thus we can further extend $I_t$ to $B^4$ so that it fixes  the point at infinity.}


We claim that $I_t \circ H_t$ is an isotopy of $B^4$ that fixes $\partial B^4$ and carries $\Sigma_0$ to $\Sigma_1$. To that end, observe that $I_0$ and $H_0$ are the identity on $B^4$, so $I_0(H_0(\Sigma_0))=\Sigma_0$. To see that $I_1(H_1(\Sigma_0))$ equals $\Sigma_1$, it suffices to show that $I_1(\Sigma_1)=\Sigma_1$. In fact, since $h_1 = \id$, the diffeomorphism $I_1$ is the identity on $S^3 \times [-1,1]$ because 
$$
I_1(x,s) = \left( h^{-1}_1(x),s\right)= (x,s)
$$
for $s \in [-1,1]$. Since $\Sigma_1$ lies in  $S^3 \times [-1,1]$, we see that $I_1(H_1(\Sigma_0))=I_1(\Sigma_1)=\Sigma_1$.

Finally, to see that $I_t \circ H_t$ fixes $\partial B^4$, we plug in $s=1$:
$$I_t\left( H_t (x,1)\right) = I_t\left(h_t(x),1\right)=\left(h^{-1}_{t}(h_t(x)),1\right)=(x,1).$$
Thus we conclude that $\Sigma_0$ and $\Sigma_1$ are ambiently isotopic rel boundary.
\end{proof}

\subsection{Symmetry groups.}\label{subsec:asym} We  show that, for $m \gg0$, the knots $J_{m,n}$ from the proof of Theorem~\ref{thm:main} (and Examples~\ref{ex:asym}-\ref{ex:higher-genus}) are hyperbolic with trivial symmetry group. Consider the three-component link $L$ shown in Figure~\ref{fig:surgery}. Observe that the knot complement $S^3 \setminus J_{m,n}$ is obtained from $S^3 \setminus L$ by performing the indicated Dehn filling on the two unknotted link components. 

\begin{figure}[h]\center
\def\svgwidth{.385\linewidth}
\begingroup%
  \makeatletter%
  \providecommand\color[2][]{%
    \errmessage{(Inkscape) Color is used for the text in Inkscape, but the package 'color.sty' is not loaded}%
    \renewcommand\color[2][]{}%
  }%
  \providecommand\transparent[1]{%
    \errmessage{(Inkscape) Transparency is used (non-zero) for the text in Inkscape, but the package 'transparent.sty' is not loaded}%
    \renewcommand\transparent[1]{}%
  }%
  \providecommand\rotatebox[2]{#2}%
  \newcommand*\fsize{\dimexpr\f@size pt\relax}%
  \newcommand*\lineheight[1]{\fontsize{\fsize}{#1\fsize}\selectfont}%
  \ifx\svgwidth\undefined%
    \setlength{\unitlength}{241.92610373bp}%
    \ifx\svgscale\undefined%
      \relax%
    \else%
      \setlength{\unitlength}{\unitlength * \real{\svgscale}}%
    \fi%
  \else%
    \setlength{\unitlength}{\svgwidth}%
  \fi%
  \global\let\svgwidth\undefined%
  \global\let\svgscale\undefined%
  \makeatother%
  \begin{picture}(1,0.6687269)%
    \lineheight{1}%
    \setlength\tabcolsep{0pt}%
    \put(0,0){\includegraphics[width=\unitlength,page=1]{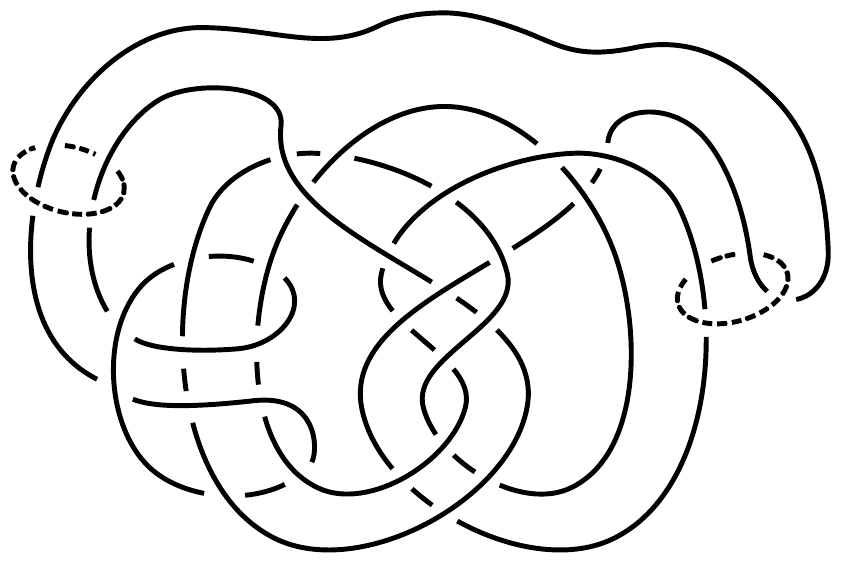}}%
    \put(-0.08081053,0.46122786){\makebox(0,0)[lt]{\lineheight{1.25}\smash{\begin{tabular}[t]{l}$\frac{1}{m}$\end{tabular}}}}%
    \put(0.88244112,0.22650448){\makebox(0,0)[lt]{\lineheight{1.25}\smash{\begin{tabular}[t]{l}$-\frac{1}{n}$\end{tabular}}}}%
  \end{picture}%
\endgroup%

\caption{A $3$-component link consisting of $J_{m,n}$ and two unknotted link components (dotted) on which we perform the indicated Dehn filling.}\label{fig:surgery}
\end{figure}

\begin{lem}
The link complement $S^3 \setminus L$ is hyperbolic  with trivial isometry group.
\end{lem}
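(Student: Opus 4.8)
The proof will be computational, carried out with SnapPy \cite{snappy} inside Sage \cite{sagemath}. The plan is first to encode the explicit three-component link $L$ of Figure~\ref{fig:surgery} into SnapPy --- for instance via a planar-diagram (PD) code or a tangle/braid description read directly off the figure --- producing an ideally triangulated cusped manifold $M = S^3 \setminus L$. I would then establish the two assertions, hyperbolicity and triviality of the isometry group, as separate certified computations.

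For hyperbolicity, I would invoke Thurston's gluing equations: a complete, finite-volume hyperbolic structure on $M$ corresponds to a solution of the gluing and completeness equations with every shape parameter in the upper half-plane. SnapPy produces an approximate such solution, and --- crucially for rigor --- certifies it using interval (ball) arithmetic via its \texttt{verify} module. A verified, positively oriented solution is a genuine proof that $M$ admits a complete hyperbolic metric, so $S^3 \setminus L$ is hyperbolic. (As an independent sanity check, if the diagram of $L$ is prime, non-split, and alternating, and $L$ is not a torus link, hyperbolicity would also follow from Menasco's theorem; but the verified SnapPy computation is the primary route and does not rely on any special features of the diagram.)

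For the isometry group, Mostow--Prasad rigidity guarantees that the hyperbolic structure on $M$ is unique, so its isometry group is a \emph{finite} group isomorphic to the mapping class group of $M$. I would compute it from the canonical Epstein--Penner cell decomposition, a combinatorial invariant of $M$ whose combinatorial symmetries realize exactly the isometries; SnapPy performs this computation (via \texttt{M.symmetry\_group()}), and with verified arithmetic for the canonical retriangulation the output is rigorous. One then reads off that the group is trivial, i.e.\ contains only the identity.

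The main obstacle is \emph{rigor} rather than the underlying numerics: a bare floating-point solution of the gluing equations does not prove hyperbolicity, and a numerically computed symmetry group does not prove triviality. Both conclusions rest on certified interval-arithmetic computations --- the existence of a genuine geometric solution near the numerical one, and the exactness of the canonical cell decomposition --- together with the structural input that $M$ is a link complement, so that Thurston's hyperbolization and Mostow rigidity apply. A secondary, more mundane subtlety is ensuring that the combinatorial data fed to SnapPy faithfully encodes the link $L$ drawn in Figure~\ref{fig:surgery}; this I would guard against by cross-checking computed invariants (volume, cusp shapes, homology) against the expected diagrammatic features of $L$.
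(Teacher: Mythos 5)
Your proposal is correct and matches the paper's approach: the paper likewise enters $L$ into SnapPy (via a DT code) inside Sage, computes the verified canonical retriangulation, and certifies hyperbolicity and triviality of the isometry group from it, counting combinatorial self-isomorphisms of the canonical cell decomposition via \texttt{len(R.isomorphisms\_to(R))}. The only cosmetic difference is that the paper folds both conclusions into the single verified canonical-retriangulation computation rather than certifying hyperbolicity separately from the gluing equations.
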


\begin{proof}
We used SnapPy's link editor to obtain a Dowker-Thistlethwaite code for $L$:
\begin{align*}
\text{DT:}[(38,-32,-26,-56,-22,60,48,-6,34,-40,52,-12,58,16,-64,18,-4,30,\\-20,2,
-36,66,68,-8,24,-62,-70,46,14,-50,10),(-28,42),(-54,44)]
\end{align*}
Enter $L$ into Sage using \texttt{L=snappy.ManifoldHP(`DT:[(...)]')} and produce a triangulation of $S^3 \setminus L$  via \texttt{R=L.canonical\_retriangulation(verified=True)}.  Verify the isometry group is trivial via  \texttt{len(R.isomorphisms\_to(R))}, which returns \texttt{1}.
\end{proof}

\begin{prop}\label{prop:large}
For any $n \geq 0$ and sufficiently large $m \gg 0$,   the knot complement $S^3 \setminus J_{m,n}$ is hyperbolic with trivial isometry group.
\end{prop}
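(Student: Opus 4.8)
The plan is to deduce Proposition~\ref{prop:large} from the preceding lemma using Thurston's hyperbolic Dehn surgery theorem, together with a short-geodesic argument to pin down the isometry group. Write $N = S^3 \setminus L$ and let $c_0, c_1, c_2$ be the cusps corresponding to $J_{m,n}$ and the two dotted unknots, so that $S^3 \setminus J_{m,n}$ is the Dehn filling of $N$ along the slopes $1/m$ on $c_1$ and $-1/n$ on $c_2$. Since only $m$ tends to infinity, I would treat $n$ as fixed and factor the filling through the two-cusped manifold $M_n$ obtained by filling only $c_2$ with slope $-1/n$; then $S^3 \setminus J_{m,n}$ is the filling of $M_n$ along the slope $1/m$ on (the image of) the cusp $c_1$.

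First I would record hyperbolicity. For $n \gg 0$ the slope $-1/n$ is long, so $M_n$ is hyperbolic by Thurston's theorem applied to the cusp $c_2$ of $N$; the finitely many remaining values of $n$ can be checked directly in SnapPy \cite{snappy}, exactly as in the preceding lemma, so $M_n$ is hyperbolic for every $n \geq 0$. Applying Thurston's theorem again, now to the cusp $c_1$ of $M_n$, all sufficiently long slopes $1/m$ yield hyperbolic manifolds, and $S^3 \setminus J_{m,n}$ converges geometrically to $M_n$ as $m \to \infty$, with the core of the filling solid torus a geodesic $\gamma_m$ whose length tends to $0$.

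For the isometry group, the key point is that for $m \gg 0$ the geodesic $\gamma_m$ is the unique shortest geodesic of $S^3 \setminus J_{m,n}$, hence is preserved setwise by every isometry. Drilling $\gamma_m$ therefore identifies $\mathrm{Isom}(S^3 \setminus J_{m,n})$, for large $m$, with a subgroup of the isometries of $M_n$ that fix the cusp $c_1$, so it suffices to show this cusp-fixing subgroup of $\mathrm{Isom}(M_n)$ is trivial for every $n$. For $n \gg 0$ the same short-geodesic argument (now applied to the $c_2$-core inside $M_n$) identifies $\mathrm{Isom}(M_n)$ with a subgroup of the $c_2$-fixing isometries of $N$, which is trivial by the preceding lemma; the finitely many small $n$ are again handled by a direct SnapPy computation of $\mathrm{Isom}(M_n)$. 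Mostow rigidity guarantees that these isometry groups are finite and compute the relevant symmetry data fed into Lemma~\ref{lem:asym}.

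The main obstacle is precisely the asymmetry between the two fillings: because $n$ is fixed rather than large, I cannot treat both slopes as long and invoke a single simultaneous Dehn-filling statement. The genuine content of the argument is therefore twofold: (i) verifying that the intermediate manifold $M_n$ is hyperbolic with trivial $c_1$-cusp-fixing isometry group for the finitely many small $n$ not covered by Thurston's theorem, and (ii) using geometric convergence carefully to ensure the filling core is the systole, so that isometries of the knot complement genuinely descend to cusp-preserving isometries of $M_n$. The second point uses only standard properties of geometric limits, but the first is where one must fall back on explicit computation, consistent with the computational verification already invoked in the proof of Theorem~\ref{thm:main}.
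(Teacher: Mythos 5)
Your overall architecture matches the paper's: factor the filling through the intermediate manifold obtained by performing only the $-1/n$-filling (your $M_n$ is the paper's $Y_n$), use the unique-shortest-geodesic argument so that isometries of the filled manifold descend to cusp-preserving isometries of the drilled one, and finish with the lemma that $S^3 \setminus L$ has trivial isometry group. However, there is a genuine gap at the step you describe as ``the finitely many remaining values of $n$ can be checked directly in SnapPy.'' Thurston's hyperbolic Dehn surgery theorem is not effective: it asserts that all sufficiently long slopes work, but it gives no computable threshold, so it does not tell you \emph{which} values of $n$ remain to be checked. As stated, your procedure cannot be carried out --- you would be checking $n = 0, 1, 2, \ldots$ with no criterion for when to stop. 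The same problem affects your isometry-group step: for each fixed small $n$ you need the core of the $-1/n$-filling to be the unique shortest geodesic in $M_n$ (or else a direct computation of $\mathrm{Isom}(M_n)$), and again Thurston provides no bound identifying the $n$ for which this is guaranteed. Since the entire content of the proposition is the uniformity in $n$, this is the crux of the proof rather than a detail.

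The paper closes exactly this gap by replacing Thurston with the quantified Dehn filling theorem of Futer--Purcell--Schleimer \cite{futer-purcell-schleimer}. After verifying with SnapPy that the systole of $S^3 \setminus L$ exceeds $0.1428$, their Theorem~7.28 guarantees that any filling slope of normalized length at least $10.1$ yields a hyperbolic manifold in which the filling core is the unique shortest closed geodesic --- note that the effective statement is needed for \emph{both} conclusions at once. Verified cusp areas (approximately $10.745$) together with SnapPy's \texttt{short\_slopes} enumeration show that every slope of length greater than $34$ has normalized length at least $34/\sqrt{11} \approx 10.251 > 10.1$, and that the slopes $-1/n$ failing this bound are exactly those with $0 \leq n \leq 23$. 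This pins down the exceptional set explicitly, and a verified SnapPy loop then confirms that each such $Y_n$ is hyperbolic with trivial isometry group. Your appeal to geometric convergence is fine where you use it (the $m \gg 0$ direction), precisely because there the proposition permits the threshold on $m$ to depend on $n$, so a non-effective theorem suffices; it is only the $n$-direction that demands the effective input you are missing.
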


\begin{proof}
Thurston's hyperbolic Dehn surgery theorem \cite{thurston:notes}  implies that, when $|m|$ and $|n|$ are both sufficiently large, the Dehn-filled 3-manifold $S^3 \setminus J_{m,n}$ is hyperbolic and  the cores of the surgered solid tori are the unique shortest closed geodesics in $S^3 \setminus J_{m,n}$. In this case, any isometry of  $S^3 \setminus J_{m,n}$ must fix these short geodesics setwise, hence restricts to an isometry of their complements, i.e., $S^3 \setminus L$ (c.f., \cite[\S5]{kojima}). The latter has trivial isometry group, which in turn implies that $S^3 \setminus J_{m,n}$ has trivial isometry group when $|m|$ and $|n|$ are both sufficiently large.

We can use the work of Futer-Purcell-Schleimer \cite{futer-purcell-schleimer} to quantify these thresholds and extend the argument to all $n \geq 0$. As a preliminary step, we check that the systole length of $S^3 \setminus L$ is at least $0.1428$. To do so, we continue with the Sage session described above, entering \texttt{L.length\_spectrum(cutoff=1.0)} to  list all geodesics in $S^3 \setminus L$ of length at most 1. This returned a single curve, which had length approximately 0.977. Thus the systole length of $S^3 \setminus L$ is greater than $0.1428$.

Next, let $Y_n$ denote the 3-manifold obtained from $S^3 \setminus L$ by performing only the $-1/n$-filling along the unknotted component link component on the right. By \cite[Theorem~7.28]{futer-purcell-schleimer}, if the normalized length of the $-1/n$-filling slope is at least 10.1, then $Y_n$ is hyperbolic and the core of the surgered solid torus is the unique shortest closed geodesic in $Y_n$. By the same arguments as above, it will then follow that $Y_n$ is hyperbolic with trivial isometry group for all such $n$. 

To determine the normalized length of a slope $\gamma$, we fix a cusp $C$ on which to measure the length of $\gamma$, then normalize it as $\operatorname{Length}(\gamma)/\sqrt{\operatorname{Area}(C)}$. To measure these cusp areas, we use \texttt{L.cusp\_areas(verified=True)}, which tells us that the area of the cusp in question is approximately 10.745.  Next we ask SnapPy for all of the slopes on the given cusp that have length at most 34, which ensures a normalized length of
$$\frac{34}{\sqrt{\operatorname{Area}(C)}} > \frac{34}{\sqrt{11}} \approx 10.251 > 10.1,$$
as desired. We find these slopes using \texttt{L.short\_slopes(verified=True,length=34)}. SnapPy returns a list that includes the $-1/n$-slopes for $0 \leq n \leq 23$. Therefore, for $n > 23$, it follows that $Y_n$ is hyperbolic with trivial isometry group. For $0 \leq n \leq 23$, we run a loop to directly determine the hyperbolicity and symmetry groups of $Y_n$:
\begin{quote}
\tt
for n in range(0,24):

\qquad L.dehn\_fill((1,-n),2) 

\qquad Y = L.filled\_triangulation()

\qquad R = Y.canonical\_retriangulation(verified=True) 

\qquad Y = R.with\_hyperbolic\_structure()

\qquad Y.verify\_hyperbolicity()

\qquad len(R.isomorphisms\_to(R))
\end{quote}
For each value $n=0,1,\ldots,23$, this prints a confirmation that $Y_n$ is hyperbolic and that its isometry group is trivial.
\end{proof}

\vspace{-.45in}

\section{Reidemeister induced chain maps.} \label{subsec:chain_maps} In order to create a toolkit of explicitly defined Reidemeister induced chain maps, we list them all here. It is easier to record the Reidemeister III induced chain maps as tables, interpreted in the following manner. The top left cell of each table indicates the Reidemeister move: the bottom left corner of the cell gives the starting (local) diagram and the top right corner gives the ending. Similarly, the left column gives smoothings of the starting diagram, and the top row gives smoothings of the ending diagram. The row associated to a smoothing of the starting diagram defines the chain map on that smoothing (and any of its labelings): empty cells map to $0$; a cell with an $I$ maps to the corresponding smoothing in that column by an isotopy; a cell with a decoration of the smoothing is as expected (see Section \ref{subsec:induced_maps}). 

For convenience, we have also given an enumeration of the crossings and labeled the corresponding binary sequence for each smoothing (a different enumeration will not change the map). Also for convenience, we have listed two extra Reidemeister III moves: Tables \ref{table_r3_positive_under} and \ref{table_r3_negative_under} are determined from Tables \ref{table_r3_positive_over} and \ref{table_r3_negative_over}, respectively, by a rotation of the tangle (it is generally a headache to apply this rotation as well as the desired chain map).

\vspace{-0.5em}

\captionsetup{margin=0cm}

\begin{table}[!ht]
\bigskip
\begin{minipage}{.22\linewidth}
	\caption{Chain maps induced by a Reidemeister I.}
	\label{table_r1}
\end{minipage}
\quad
\begin{minipage}{.7\linewidth}
	\renewcommand{\arraystretch}{2.35}
	\begin{tabular}{|c|c|c|}
		\hline
		Reidemeister move & Smoothing & Induced map \\
		\hline
		\multirow{2}{80pt}{\begin{minipage}{9em}
				$\kropos \to \kroarc$
		\end{minipage}} & \krosa & \kroma $\begin{array}{c}\hspace{-1em}\vspace{.425em}\end{array}$ \\\cline{2-3} & \krosb & $\begin{array}{c} 0 \vspace{.425em} \end{array}$ \\
		\hline
		$\kroarc \to \kropos$ & \kroarc & $\begin{array}{c} \frac12 \Bigg( \kromb \Bigg) \vspace{.425em} \end{array}$ \\
		\hline
		\multirow{2}{80pt}{\begin{minipage}{9em}
				$\kroneg \to \kroarc$
		\end{minipage}} & \krosa & $\begin{array}{c} \frac12 \Bigg( \kromc \Bigg) \vspace{.425em} \end{array}$ \\\cline{2-3} & \krosb & $\begin{array}{c} 0 \vspace{.425em} \end{array}$ \\
		\hline
		$\kroarc \to \kroneg$ & \kroarc & \kromd $\begin{array}{c}\hspace{-1em}\vspace{.425em}\end{array}$ \\
		\hline
	\end{tabular}
	\renewcommand{\arraystretch}{1}
\end{minipage}
\end{table}

\vspace{0.5em}

\begin{table}[!ht] 
\begin{minipage}{.22\linewidth}
	\caption{Chain maps induced by a Reidemeister II.}
	\label{table_r2}
	\end{minipage}
	\quad
	\begin{minipage}{.7\linewidth}
		\renewcommand{\arraystretch}{2.35}
		\begin{tabular}{|c|c|c|}
			\hline
			Reidemeister move & Smoothing & Induced map \\
			\hline
			\multirow{2}{80pt}{\begin{minipage}{9em}
					$\krtcr \to \krtcrl$
			\end{minipage}} & \krtsa & $-$ \!\! \krtma $\begin{array}{c}\hspace{.25em}\vspace{.425em}\end{array}$ \\\cline{2-3} & \krtsb & \krtcrl $\begin{array}{c}\hspace{-1em}\vspace{.425em}\end{array}$ \\\cline{2-3} & \krtsc & $\begin{array}{c} 0 \vspace{.425em} \end{array}$ \\\cline{2-3} & \krtsd & $\begin{array}{c} 0 \vspace{.425em} \end{array}$ \\
			\hline
			$\krtcrl \to \krtcr$ & \krtsb & \krtmb $\begin{array}{c}\hspace{-1em}\vspace{.425em}\end{array}$ \\
			\hline
		\end{tabular}
		\hspace{1.75em}
		\renewcommand{\arraystretch}{1}
	\end{minipage}
\end{table}

\clearpage

\newgeometry{bottom=.6in}

\begin{table}[!ht]\center
	\hypertarget{table_r3}{}
\begin{minipage}{.18\linewidth}
	\renewcommand{\thetable}{5(a)}
	\caption{}
	\label{table_r3_positive_over}
	\end{minipage}
		\begin{minipage}{.72\linewidth}	\includegraphics[scale=.61]{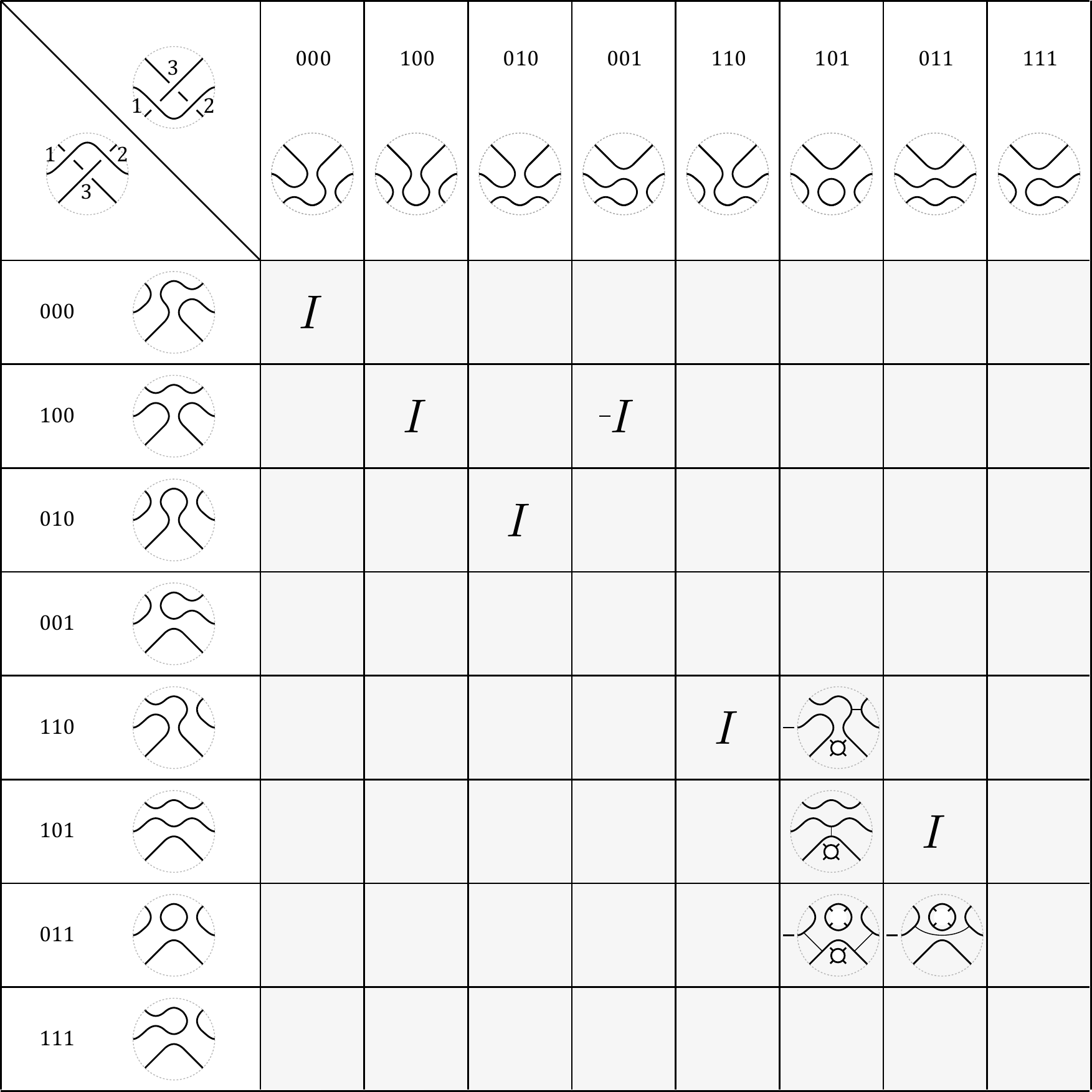}
	\end{minipage}
\end{table}

\begin{table}[!ht]\center
	\begin{minipage}{.18\linewidth}	
	\renewcommand{\thetable}{5(b)}
	\caption{}
	\label{table_r3_positive_under}
	\end{minipage}
	\begin{minipage}{.72\linewidth}	\includegraphics[scale=.61]{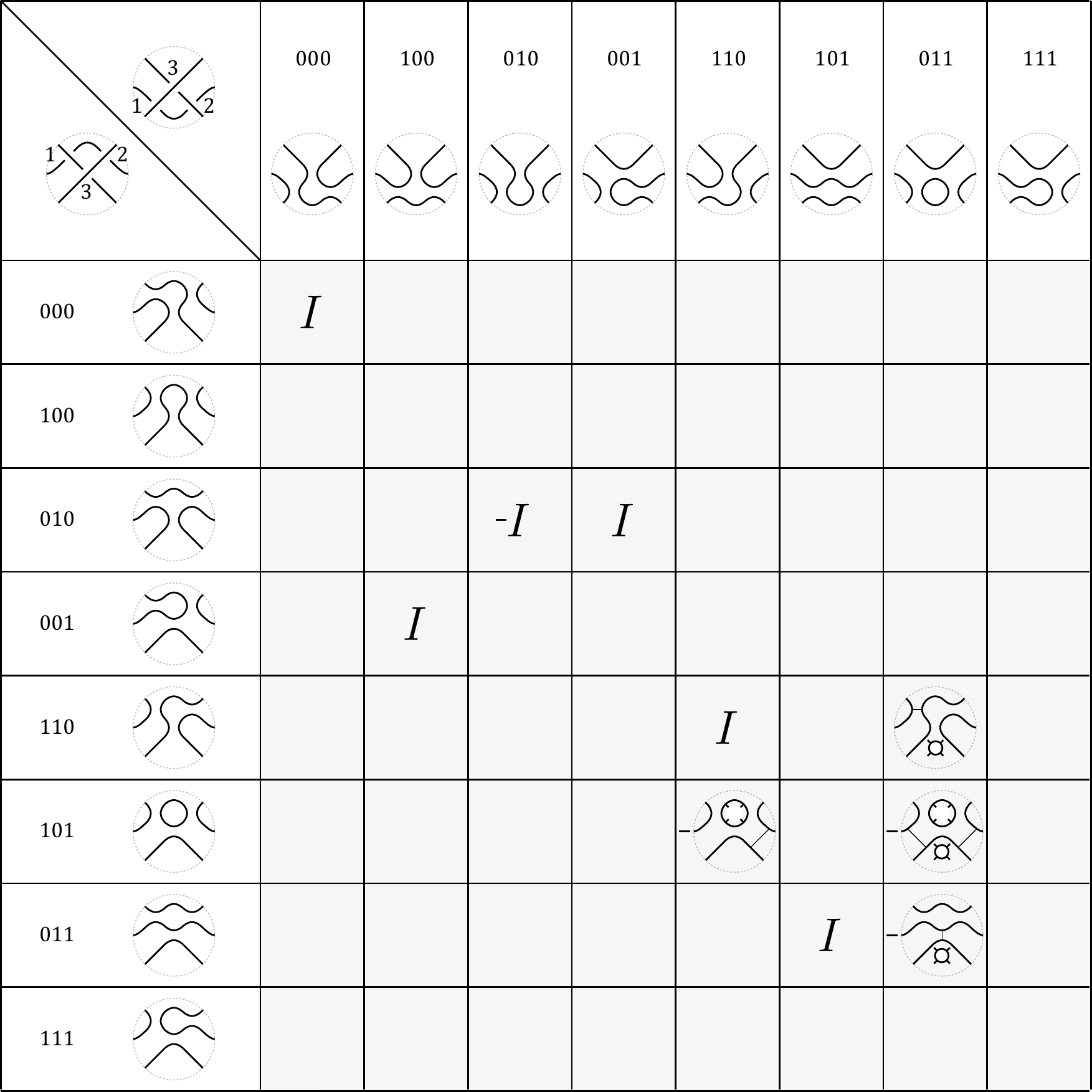}
	\end{minipage}
\end{table}

\newgeometry{bottom=.6in}

\begin{table}[!ht]\center
	\begin{minipage}{.18\linewidth}	
	\renewcommand{\thetable}{5(c)}
	\caption{}
	\label{table_r3_negative_over}
	\end{minipage}
	\begin{minipage}{.72\linewidth}	\includegraphics[scale=.61]{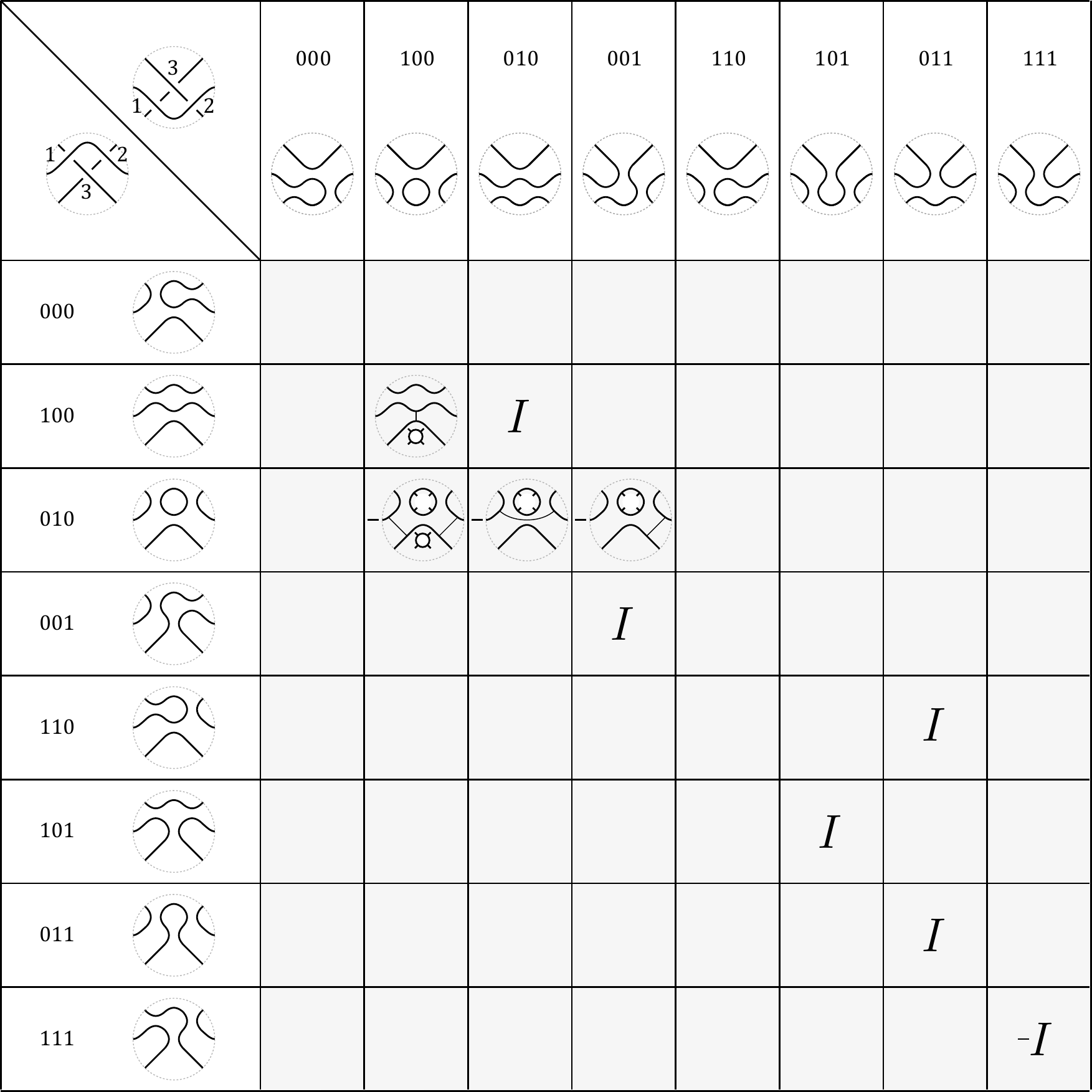}
	\end{minipage}
\end{table}

\begin{table}[!ht]\center
	\begin{minipage}{.18\linewidth}
		\renewcommand{\thetable}{5(d)}
         	\caption{}
	\label{table_r3_negative_under}
	\end{minipage}
	\begin{minipage}{.72\linewidth}	\includegraphics[scale=.61]{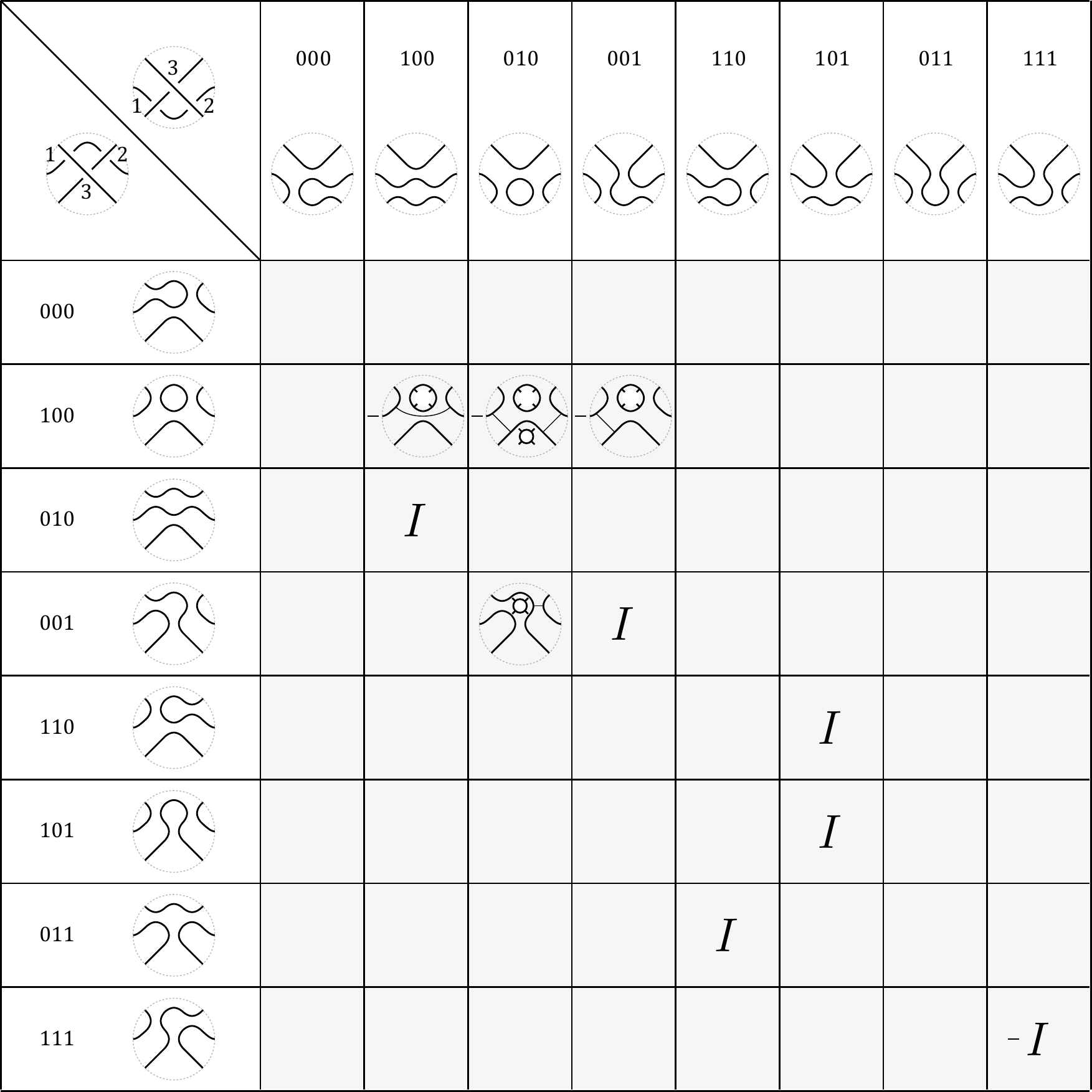}
	\end{minipage}
\end{table}

\clearpage


\newgeometry{margin=1.25in, left=1.45in, right=1.15in}


\titleformat{\section}
  {\normalfont\fontsize{12}{5}\bfseries}{}{\center 1em}{\center}

{\small \footnotesize \bibliographystyle{alphamod} \bibliography{main_tex}}

\end{document}